\numberwithin{equation}{section} \DeclareMathSizes{2}{10}{12}{13}
\newtheorem{theorem}{Theorem}[section]
\newtheorem{lemma}[theorem]{Lemma}
\newtheorem{prop}[theorem]{Proposition}
\newtheorem{corollary}[theorem]{Corollary}
\newtheorem{remark}[theorem]{Remark}
\newtheorem{definition}[theorem]{Definition}
\numberwithin{equation}{section}
\title{Cycles over DGH-semicategories and pairings in categorical Hopf-cyclic cohomology}
\author{Mamta Balodi\footnote{Email: mamta.balodi@gmail.com} \footnote{MB was supported by 
SERB
Fellowship PDF/2017/000229}  $\qquad$ Abhishek Banerjee \footnote{Email: abhishekbanerjee1313@gmail.com} \footnote{AB was partially supported by SERB Matrics fellowship MTR/2017/000112}}
\date{}
\begin{document}

\maketitle

\centerline{\emph{Department of Mathematics, Indian Institute of Science, Bangalore - 560012, India.}}

\begin{abstract}
Let $H$ be a Hopf algebra and let $\mathcal D_H$ be a Hopf-module category.  We describe the cocycles and coboundaries for the Hopf cyclic cohomology
of $\mathcal D_H$, which correspond respectively to categorified cycles and vanishing cycles over $\mathcal D_H$. An important role in our work is played
by semicategories, which are categories that may not contain identity maps.   In particular, a cycle over $\mathcal D_H$ consists of a differential graded $H$-module semicategory equipped with a trace on endomorphism groups satisfying some conditions. Using a pairing on cycles, we obtain
a pairing $HC^p(\mathcal{C}) \otimes HC^q(\mathcal{C}') \longrightarrow HC^{p+q}(\mathcal{C} \otimes \mathcal{C}')$ on cyclic cohomology groups for small $k$-linear categories $\mathcal C$ and $\mathcal C'$. 

\end{abstract}

\medskip
{\bf MSC(2010) Subject Classification:} 16T05,  18E05, 58B34 

\medskip
{\bf Keywords: } Hopf-module categories, Hopf-cyclic cohomology, Differential graded categories

\section{Introduction}

In  \cite{CM0}, \cite{CM1}, \cite{CM2}, Connes and Moscovici introduced Hopf-cyclic cohomology as a generalization of Lie algebra cohomology adapted to Noncommutative Geometry. Given a Hopf algebra $H$ that is equipped with a modular pair in involution $(\delta,\sigma)$ and acts on an algebra $A$, they constructed a characteristic map
\begin{equation}\label{qe1.1}
\gamma^\bullet: HC^\bullet_{(\delta,\sigma)}(H)\longrightarrow HC^\bullet(A)
\end{equation}
taking values in the cyclic cohomology $HC^\bullet(A)$ of $A$. Both Hochschild homology and the cyclic theory have  since been studied extensively in several categorical contexts (see, for instance, \cite{BoSt}, \cite{BoSt1}, \cite{KL},  \cite{KoSh}, \cite{LV}, \cite{Low}, \cite{carthy}). The purpose of this paper is to categorify the formalism of cycles, traces and vanishing cycles
of Connes \cite{C2} in the context of Hopf cyclic cohomology.

\smallskip
Let $k$ be a field. A Hopf-module category consists of a $k$-linear category $\mathcal D_H$ with $H$ acting on its morphism spaces in such a way that the composition on $\mathcal D_H$
is well-behaved with respect to the coproduct on $H$. This notion was introduced by Cibils and Solotar in \cite{CiSo}, where they constructed a Morita equivalence connecting the Galois coverings of a category to its smash extensions via a Hopf algebra. A small Hopf-module category may be treated as a ``Hopf module algebra with several objects,'' in the same way
as a small preadditive category plays the role of a ``ring with several objects'' in the sense of Mitchell \cite{Mit1}. In fact, the replacement of rings by small preadditive categories
has been widely studied in the literature (see, for instance, \cite{AB}, \cite{EV}, \cite{LV2}, \cite{Low2}, \cite{Xu1}, \cite{Xu2}). Further, cyclic modules associated to Hopf-module categories
have been studied by Kaygun and Khalkhali \cite{kk}, while the Hochschild-Mitchell cohomology of a Hopf-comodule category has been studied by Herscovich and Solotar \cite{HS}. This paper is also part of our larger program of studying Hopf-module categories as objects of independent interest, begun in \cite{BBR1}, \cite{BBR2}.

\smallskip
For a Hopf-module category $\mathcal D_H$,  we describe in this paper  the cocycles and coboundaries that determine  its Hopf cyclic cohomology groups  by extending Connes' original construction of cyclic cohomology from \cite{C1} and \cite{C2} in terms of cycles and closed graded traces on differential graded algebras. An important role in our paper is played by ``semicategories,'' which are categories that may not contain identity maps. This notion, introduced by Mitchell \cite{Mit}, is precisely what we need in order to categorify 
non unital algebras. We work with the Hopf cyclic cohomology groups  $HC^\bullet_H(\mathcal D_H,M)$ having coefficients in $M$, where $M$
is a stable anti-Yetter Drinfeld module in the sense of \cite{hkrs}. Accordingly, we interpret the cocycles $Z^\bullet_H(\mathcal D_H,M)$  and the coboundaries $B^\bullet_H(\mathcal D_H,M)$ as characters of differential graded $H$-module semicategories equipped with closed graded traces with coefficients in $M$. We therefore feel that the present article is the first step towards ``categorification'' of the Noncommutative Differential Geometry of Connes \cite{C2}. We continue this program by developing categorifications of Fredholm modules and associated Chern characters in \cite{BB}.

\smallskip
We now describe the paper in more detail. In Section 2, we introduce the notion of a $\delta$-invariant $\sigma$-trace on a left $H$-category $\mathcal{D}_H$ (see Definition 
\ref{Df2.2w}), where $(\delta,\sigma)$ is a modular pair in involution for the Hopf algebra $H$. Given such a trace, we  prove (see Theorem \ref{charmap}) that there is a characteristic map 
\begin{equation}
\gamma^\bullet: HC^\bullet_{(\delta,\sigma)}(H)\longrightarrow HC^\bullet(\mathcal D_H)
\end{equation}
from the Hopf-cyclic cohomology $HC^\bullet_{(\delta,\sigma)}(H)$ taking values in the ordinary cyclic cohomology $HC^\bullet(\mathcal D_H)$ of the category $\mathcal{D}_H$. When $\mathcal{D}_H$ is a left $H$-category with a single object, i.e., an $H$-module algebra $A$, this recovers the   characteristic map  $HC^\bullet_{(\delta,\sigma)}(H)\longrightarrow HC^\bullet(A)$ in \eqref{qe1.1}.

\smallskip
In Section 3, we study the $\delta$-invariant $\sigma$-traces on $\mathcal{D}_H$ in more detail. For this, we first take the Hopf-cyclic cohomology $HC^\bullet_H(\mathcal{D}_H,M)$ of an $H$-category $\mathcal{D}_H$ with coefficients in an SAYD module $M$.  We then show that $\delta$-invariant $\sigma$-traces on $\mathcal{D}_H$ are precisely the 0-cocycles, i.e., the elements of $Z_H^0(\mathcal{D}_H,{^\sigma}{k}_\delta)=HC_H^0(\mathcal{D}_H,{^\sigma}{k}_\delta)$ (see Proposition \ref{bijection}). Since a $\delta$-invariant $\sigma$-trace on $\mathcal{D}_H$ induces a map $\gamma^\bullet: HC^\bullet_{(\delta,\sigma)}(H) \longrightarrow HC^\bullet(\mathcal D_H)$, we obtain a pairing
\begin{equation}\label{1.4gh}
HC^n_{(\delta,\sigma)}(H) \otimes HC_H^0(\mathcal{D}_H,{^\sigma}{k}_\delta) \longrightarrow HC^n(\mathcal{D}_H)
\end{equation}

\smallskip
The pairing in \eqref{1.4gh} suggests that we  look for a similar pairing when the Hopf algebra $H$ is replaced by an $H$-module coalgebra $C$ and the SAYD module ${^\sigma}{k}_\delta$ is replaced by an arbitrary SAYD module $M$. In \cite{hkrs}, it was conjectured that there
is a general pairing between the Hopf-cyclic
cohomology of a module coalgebra and the Hopf-cyclic cohomology of a module algebra, a fact that was proved later by Khalkhali and Rangipour in \cite{kr}. In Theorem \ref{thm8.15}, we use methods similar to Rangipour \cite{Rangipu} to construct a pairing
\begin{equation}\label{rpi2xy}
HC^q_H(C,M) \otimes HC^p_H(\mathcal{D}_H,M) \longrightarrow HC^{p+q}(\mathcal{D}_H)
\end{equation} for $p$, $q\geq 0$. For related work  on pairings
and Hopf-cyclic cohomology, we refer the reader to \cite{AK}, \cite{MB}, \cite{hkrs2}, \cite{Hask}, \cite{Hask1}, \cite{kyg}, \cite{kyg1}, \cite{Rangipu}.

\smallskip
In Section 5, we provide  a description of the space $Z_H^\bullet(\mathcal{D}_H,M)$ of cocycles, for which we extend the formalism of Connes \cite{C2}. We first describe in detail the construction of the universal differential graded (DG)-semicategory associated to a small $k$-linear category.  We then consider DGH-semicategories which may be treated as  differential graded (not necessarily unital) $H$-module algebras with several objects.

\smallskip
Since  $\delta$-invariant $\sigma$-traces on $\mathcal{D}_H$ are precisely the 0-cocycles in the Hopf-cyclic cohomology $HC^\bullet_H(\mathcal{D}_H,{^\sigma}{k}_\delta)$, we are motivated
to define  more generally the $n$-dimensional closed graded $(H,M)$-traces on a DGH-semicategory $\mathcal{S}_H$ (see, Definition \ref{gradedtrace}). We then introduce  cycles $(\mathcal{S}_H,\hat{\partial}_H,\hat{\mathscr{T}}^H)$ over the $H$-category $\mathcal D_H$ using which we provide a description of $Z_H^\bullet(\mathcal{D}_H,M)$ in Theorem \ref{charcycl}. This result is an $H$-linear categorical version of Connes'  \cite[Proposition 1, p. 98]{C2}. We show that an element $\phi \in Z_H^n(\mathcal{D}_H,M)$ if and only if it is the character of an $n$-dimensional cycle over $\mathcal{D}_H$.
It also follows from Theorem \ref{charcycl} that there is a one to one correspondence between  $Z^n_H(\mathcal{D}_H,M)$ and the collection of $n$-dimensional closed graded $(H,M)$-traces on the universal DGH-semicategory $\Omega(\mathcal D_H)$ associated to $\mathcal{D}_H$. We then proceed to obtain a description of the space $B^\bullet_H(\mathcal{D}_H,M)$ of coboundaries. 

\smallskip 
In Section 6, we show that the Hopf-cyclic cohomology of an $H$-category $\mathcal D_H$ is the same as that of its linearization $\mathcal{D}_H \otimes M_r(k)$ by the matrix
ring $M_r(k)$. For this, we first construct a para-cyclic module $C_\bullet(\mathcal{D}_H,M)=\{M \otimes CN_n(\mathcal{D}_H)\}_{n \geq 0}$ using the cyclic nerve 
$CN_\bullet(\mathcal D_H)$. We also consider inclusion and trace maps
\begin{equation*}
({inc}_1,M):M \otimes  CN_n(\mathcal{D}_H)  \longrightarrow M \otimes CN_n\left(\mathcal{D}_H  \otimes M_r(k)\right) 
\end{equation*}
\begin{equation*}
tr^M:M \otimes CN_n\left(\mathcal{D}_H \otimes M_r(k)\right) \longrightarrow M \otimes CN_n(\mathcal{D}_H)  
\end{equation*}
Then, we show  in Proposition \ref{homotopy} that the induced morphisms 
\begin{equation*}
C_\bullet(inc_1,M)^{hoc}:C_\bullet(\mathcal{D}_H,M)^{hoc} \longrightarrow C_\bullet\left(\mathcal{D}_H \otimes M_r(k),M\right)^{hoc}
\end{equation*}
\begin{equation*}
C_\bullet({tr^M})^{hoc}:C_\bullet\left(\mathcal{D}_H \otimes M_r(k),M\right)^{hoc} \longrightarrow  C_\bullet(\mathcal{D}_H,M)^{hoc}
\end{equation*}
between the underlying Hochschild complexes are homotopy inverses of each other. Applying the functor $Hom_H(-,k)$, we show in Proposition \ref{moritainvcyc} that there are mutually inverse isomorphisms  of Hopf-cyclic cohomologies:
 \begin{equation}\label{1.6p}
  \xy (10,0)*{HC^\bullet_H(\mathcal{D}_H,M)}; (40,3)*{HC^{\bullet}_H(tr^M)}; (40,-3.3)*{HC^{\bullet}_H(inc_1,M)};  {\ar
(25,1)*{}; (60,1)*{}}; {\ar (60,-1)*{};(25,-1)*{}};
\endxy \quad HC^\bullet_H\left(\mathcal{D}_H \otimes M_r(k),M\right)
\end{equation} 
In Section 7, we provide a description of $B^\bullet_H(\mathcal{D}_H,M)$. Throughout, we take $k=\mathbb C$. We consider families $\eta$ of automorphisms $\eta=\{\eta(X)\in Aut_{\mathcal D_H}(X)\}_{X\in Ob(\mathcal D_H)}$ such that
\begin{equation*}
h(\eta(X))=\varepsilon(h)\eta(X) \qquad\forall\textrm{ }h\in H,X\in Ob(\mathcal D_H)
\end{equation*} We show that these families form a group, which we denote by $\mathbb U_H(\mathcal D_H)$. Further, we show that the inner automorphism
of $\mathcal D_H$ induced by conjugating with an element $\eta\in \mathbb U_H(\mathcal D_H)$, i.e., the functor which fixes objects and takes any morphism $f\in Hom_{\mathcal D_H}(X,Y)$ to $\eta(Y)\circ f\circ \eta(X)^{-1}$, induces the identity functor on $HC^\bullet(\mathcal D_H,M)$. Using this and the isomorphisms in \eqref{1.6p}, we obtain in Proposition \ref{vanishing} a set of sufficient conditions for the Hopf cyclic cohomology of an $H$-category to be zero. 

\smallskip
We say that a cycle $(\mathcal{S}_H,\hat{\partial}_H,\hat{\mathscr{T}}^H)$ is vanishing if $\mathcal{S}_H^0$ is an $H$-category and $\mathcal S_H^0$  satisfies the  assumptions in Proposition \ref{vanishing}. We describe the elements of $B^\bullet_H(\mathcal{D}_H,M)$ in Proposition \ref{cob1} as the characters of vanishing cycles over $\mathcal{D}_H$. Finally, in Theorem \ref{Thmfin}, we use categorified cycles and vanishing cycles to construct a pairing 
\begin{equation*}
HC^p(\mathcal{C}) \otimes HC^q(\mathcal{C}') \longrightarrow HC^{p+q}(\mathcal{C} \otimes \mathcal{C}')
\end{equation*} 
for $k$-linear small categories $\mathcal C$ and $\mathcal C'$.

\smallskip
\textbf{Notations:}
Throughout the paper, $H$ is a Hopf algebra over the field  $k$ of characteristic zero, with comultiplication $\Delta$, counit $\varepsilon$ and bijective antipode $S$. We will use  Sweedler's notation for the coproduct $\Delta(h)= h_1 \otimes h_2$ and for a left $H$-coaction $\rho:M \longrightarrow H \otimes M$, $\rho(m)= m_{(-1)} \otimes m_{(0)}$ (with the summation sign suppressed). The small cyclic category introduced by Connes in \cite{C1} will be denoted by $\Lambda$. The Hochschild differential will always  be denoted by $b$. 

\section{Categorified characteristic in Hopf-cyclic cohomology}\label{nerve}
It is well known that a ring can be identified with a preadditive category having a single object (see, for instance \cite{Sten}). Accordingly, any small preadditive category  may be treated as a ring with several objects in the sense of Mitchell (see \cite{Mit1,Mit2}). We now recall the notion of an $H$-category, introduced by Cibils and Solotar \cite{CiSo},  which may be considered as an ``$H$-module algebra with several objects."

\begin{definition}\label{defH-cat}
Let $H$ be a Hopf algebra over a field $k$. A $k$-linear category $\mathcal{D}_H$ is said to be a left $H$-module category if
\begin{itemize}
\item[(i)] $Hom_{\mathcal{D}_H}(X,Y)$ is a left $H$-module for all $X,Y \in Ob(\mathcal{D}_H)$ 
\item[(ii)] $h(\text{id}_X)=\varepsilon(h)\text{id}_X$ for all $X \in Ob(\mathcal{D}_H)$ and $h \in H$
\item[(iii)] the composition map is a morphism of $H$-modules, i.e., 
$$h(gf)=(h_1g)(h_2f)$$
for any $h \in H$, $f \in Hom_{\mathcal{D}_H}(X,Y)$ and  $g \in Hom_{\mathcal{D}_H}(Y,Z)$. 
\end{itemize} A small left $H$-module category will be called a left $H$-category.
\end{definition}

We now let $H$ be a Hopf algebra with a modular pair in involution $(\delta,\sigma)$ (see \cite{CM2}) and let $\mathcal{D}_H$ be a  left $H$-category. In this section, we introduce the notion of a $\delta$-invariant $\sigma$-trace on the category $\mathcal{D}_H$. Using this trace, we then construct a characteristic map from the cyclic cohomology of the Hopf algebra $H$ to that of the category $\mathcal{D}_H$. We first recall from \cite{CM1}, \cite{CM2} the cyclic cohomology of a Hopf algebra.

\smallskip
Let $\delta \in H^*$ be a character and $\sigma \in H$ be a group-like element (i.e., $\Delta(\sigma)=\sigma\otimes \sigma$ and $\varepsilon(\sigma)=1$) such that $\delta(\sigma)=1$. Then, the pair $(\delta,\sigma)$ is said to be a modular pair on $H$. The character $\delta$ determines a $\delta$-twisted antipode ${S}_\delta:H \longrightarrow H$ given by
$${S}_\delta(h)=\delta(h_1)S(h_2) \qquad \forall h \in H$$
The pair $(\delta,\sigma)$ is said to be a modular pair in involution if ${S}_\delta^2(h)=\sigma h\sigma^{-1}$ for all $h \in H$. 

\smallskip
Given a modular pair $(\delta,\sigma)$ in involution for a Hopf algebra $H$, one can associate a $\Lambda$-module $C^\bullet \left(H_{(\delta,\sigma)}\right)$  by setting $C^n(H):=H^{\otimes n}$, $\forall n \geq 1$ and 
$C^0(H)=k$. For $n>1$, the face maps $\delta_i:C^{n-1}(H) \longrightarrow C^n(H)$ are given as follows:
\begin{equation*}
\begin{array}{lll}
\delta_i(h^1 \otimes \ldots \otimes h^{n-1})&=& \begin{cases}1 \otimes h^1 \otimes \ldots \otimes h^{n-1} \quad~~~~~~~~~~~~~~~~~~ i=0\\
h^1 \otimes \ldots \otimes \Delta(h^i) \otimes \ldots \otimes h^{n-1} \quad~~~~~ 1 \leq i \leq n-1\\
h^1 \otimes \ldots \otimes h^{n-1} \otimes \sigma \quad~~~~~~~~~~~~~~~~~~ i=n
\end{cases}
\end{array}
\end{equation*}For $n=1$, we have $\delta_0(1)=1$ and $\delta_1(1)=\sigma$.
For $n>0$, the degeneracy maps $\sigma_i:C^{n+1}(H) \longrightarrow C^n(H)$ for $0 \leq i \leq n$ are given by
\begin{equation*}
\sigma_i(h^1 \otimes \ldots \otimes h^{n+1})= h^1 \otimes \ldots \otimes \varepsilon(h^{i+1}) \otimes \ldots \otimes h^{n+1}\\
\end{equation*} For $n=0$, we have  $\sigma_0(h)=\varepsilon(h)$. The cyclic operator $\tau_n:C^n(H) \longrightarrow C^n(H)$ is given by
\begin{equation*}
\tau_n(h^1 \otimes \ldots \otimes h^{n})= {S}_\delta(h^1)\cdot (h^2 \otimes \ldots \otimes h^{n} \otimes \sigma)
\end{equation*}
The cyclic cohomology determined by the $\Lambda$-module $C^\bullet \left(H_{(\delta,\sigma)}\right)$ is said to be the cyclic cohomology of the Hopf algebra $H$ with respect to the modular pair $(\delta,\sigma)$ and will be denoted by $HC^\bullet_{(\delta,\sigma)}(H)$.

\smallskip
We now recall the  cyclic cohomology of a small $k$-linear category due to McCarthy \cite{carthy}. 
The  additive cyclic nerve of a small $k$-linear category $\mathcal{C}$ is defined to be the cyclic module determined by
\begin{equation}\label{cnerf}CN_n(\mathcal{C}):=\bigoplus Hom_\mathcal{C}(X_1,X_0) \otimes Hom_\mathcal{C}(X_2,X_1) \otimes \ldots \otimes Hom_\mathcal{C}(X_0,X_n)
\end{equation}
where the direct sum runs over all $(X_0,X_1,\ldots,X_n) \in Ob(\mathcal{C})^{n+1}$. The structure maps are given by
\begin{equation*}
\begin{array}{lll}
d_i(f^0 \otimes \ldots \otimes f^{n})&= \begin{cases}
f^0 \otimes f^1 \otimes \ldots \otimes f^if^{i+1} \otimes \ldots \otimes f^{n}& 0 \leq i \leq n-1\\
f^nf^0 \otimes f^1 \otimes \ldots \otimes f^{n-1}& i=n\\
\end{cases}\\
\vspace{.2cm}
s_i(f^0 \otimes \ldots \otimes f^{n})&= \begin{cases} f^0 \otimes f^1 \otimes \ldots f^i \otimes id_{X_{i+1}}\otimes f^{i+1} \otimes \ldots \otimes f^{n}& 0 \leq i \leq n-1\\
f^0 \otimes f^1 \otimes \ldots \otimes f^{n} \otimes id_{X_{0}}& i=n\\
\end{cases}\\ 
\vspace{.2cm}
t_n(f^0 \otimes \ldots \otimes f^{n})&=f^n \otimes f^0 \otimes \ldots \otimes f^{n-1}
\end{array}
\end{equation*}
for any $f^0 \otimes \ldots \otimes f^{n} \in Hom_\mathcal{C}(X_1,X_0) \otimes Hom_\mathcal{C}(X_2,X_1) \otimes \ldots \otimes Hom_\mathcal{C}(X_0,X_n)$.
The cyclic cohomology groups of $\mathcal C$ are determined by the $k$-spaces $CN^n(\mathcal{C}):=Hom_k(CN_n(\mathcal{C}),k)$ with the structure maps 
\begin{align*}
\delta_i: &~CN^{n-1}(\mathcal{C}) \longrightarrow CN^n(\mathcal{C}), \quad  \phi \mapsto \phi\circ d_i\\
\sigma_i: &~CN^{n+1}(\mathcal{C}) \longrightarrow CN^n(\mathcal{C}), \quad \psi \mapsto \psi \circ s_i\\
\tau_n: &~CN^{n}(\mathcal{C}) \longrightarrow CN^n(\mathcal{C}), \quad~~~ \varphi \mapsto \varphi \circ t_n.
\end{align*}
We will use the notation $CN^\bullet(\mathcal{C}):=\{CN^n(\mathcal{C})\}_{n \geq 0}$ to denote the cocyclic module associated to the category $\mathcal{C}$ and $HC^\bullet(\mathcal{C})$ for the corresponding cyclic cohomology.

\smallskip
We now introduce the notion of $\delta$-invariant $\sigma$-traces on an $H$-category.
\begin{definition}\label{Df2.2w}
Let $(\delta,\sigma)$ be a modular pair for a Hopf algebra $H$ and let $\mathcal{D}_H$ be a left $H$-category. Suppose that we have a collection $T^H:=\{T_X^H:Hom_{\mathcal{D}_H}(X,X) \longrightarrow k\}_{X \in Ob(\mathcal{D}_H)}$ of $k$-linear maps. Then, we say that the collection $T^H$ is a $\sigma$-trace on $\mathcal{D}_H$ if 
\begin{equation}\label{tracecomm}
T_{X}^H(g\circ f)=T_{Y}^H\left(f\circ (\sigma g)\right)
\end{equation}
for any $f \in Hom_{\mathcal{D}_H}(X,Y)$ and $g \in Hom_{\mathcal{D}_H}(Y,X)$. Moreover, we say that the $\sigma$-trace $T^H$ is $\delta$-invariant under the action of $H$ if
\begin{equation}\label{1.2d}
T_X^H(hf')=\delta(h)T_X^H(f') \qquad \forall h \in H, 
\end{equation} for all $f' \in Hom_{\mathcal{D}_H}(X,X)$. 
\end{definition}

\begin{lemma}
Let $T^H$ be a $\sigma$-trace on a left $H$-category $\mathcal{D}_H$. Then, $T^H$ is $\delta$-invariant under the action of $H$ iff the following holds:
\begin{equation}
T_{X}^H((hg)\circ f)=T_{X}^H\left(g\circ (S_\delta(h)f)\right)  \label{invariance}
\end{equation}
for any $h \in H$, $f \in Hom_{\mathcal{D}_H}(X,Y)$ and $g \in Hom_{\mathcal{D}_H}(Y,X)$.
\end{lemma}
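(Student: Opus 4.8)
The plan is to prove the two implications separately, noting that the nontrivial content is the forward direction, which is a categorical version of the Connes--Moscovici manipulation relating a $\delta$-invariant $\sigma$-trace to the $\delta$-twisted antipode.

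For the easy implication I would assume \eqref{invariance} and recover \eqref{1.2d}. The idea is to specialize $f=\mathrm{id}_X$, so that $Y=X$ and $g\in Hom_{\mathcal{D}_H}(X,X)$ is arbitrary. Then $(hg)\circ\mathrm{id}_X=hg$, while on the right $S_\delta(h)\mathrm{id}_X=\varepsilon(S_\delta(h))\mathrm{id}_X=\delta(h)\mathrm{id}_X$ by Definition \ref{defH-cat}(ii) together with the elementary identity $\varepsilon\circ S_\delta=\delta$ (which follows from $\varepsilon\circ S=\varepsilon$). Hence \eqref{invariance} collapses to $T_X^H(hg)=\delta(h)T_X^H(g)$, which is precisely $\delta$-invariance. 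So I would first record the preliminary identities $\varepsilon(S_\delta(h))=\delta(h)$ and $S_\delta(h)\mathrm{id}_X=\delta(h)\mathrm{id}_X$, after which this direction is immediate.

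For the forward implication, assume $\delta$-invariance. First I would observe that combining \eqref{1.2d} (applied to the endomorphism $g\circ f\in Hom_{\mathcal{D}_H}(X,X)$) with the fact that composition is a morphism of $H$-modules (Definition \ref{defH-cat}(iii)) yields the auxiliary identity
\[
T_X^H\big((h_1 g)\circ(h_2 f)\big)=\delta(h)\,T_X^H(g\circ f)
\]
for all $h\in H$ and composable $f,g$. The plan is then to ``move'' the $H$-action off the $g$-slot and onto the $f$-slot using the antipode. Concretely, I would use the resolution $\sum_{(h)}h_1\otimes h_2 S(h_3)=h\otimes 1_H$ (an instance of the antipode axiom obtained by grouping the coproduct as $(\mathrm{id}\otimes\Delta)\Delta$) together with the left $H$-module axiom $(h_2 S(h_3))f=h_2\cdot(S(h_3)f)$ to rewrite $(hg)\circ f=\sum_{(h)}(h_1 g)\circ\big(h_2\cdot(S(h_3)f)\big)$. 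Applying the auxiliary identity above, now regrouping the legs as $h_1\otimes h_2\otimes h_3=(h_{(1)})_1\otimes(h_{(1)})_2\otimes h_{(2)}$ and treating $S(h_{(2)})f$ as the fixed second morphism, yields $\sum_{(h)}\delta(h_{(1)})\,T_X^H\big(g\circ(S(h_{(2)})f)\big)$, which equals $T_X^H\big(g\circ(S_\delta(h)f)\big)$ by the definition $S_\delta(h)=\delta(h_1)S(h_2)$. This is exactly \eqref{invariance}.

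I expect the main obstacle to be purely bookkeeping: keeping the several Sweedler legs of $\Delta^{(2)}(h)$ straight and invoking coassociativity correctly when passing between the two groupings of $h_1\otimes h_2\otimes h_3$ used above. It is worth noting that neither direction appears to require the $\sigma$-trace relation \eqref{tracecomm} itself --- only the $H$-module structure on the Hom-spaces, the compatibility of composition with the coproduct, and the action of $H$ on identity morphisms --- so I would be careful to state the role of the hypotheses accurately rather than invoking the trace property where it is not actually needed.
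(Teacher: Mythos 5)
Your proposal is correct and follows essentially the same route as the paper: the backward direction specializes to $f=\mathrm{id}_X$ and uses $S_\delta(h)\mathrm{id}_X=\delta(h)\mathrm{id}_X$, while the forward direction combines $\delta$-invariance with the compatibility $h(g\circ f)=(h_1g)(h_2f)$ and the antipode identity $h_1\otimes h_2S(h_3)=h\otimes 1$ — the paper simply runs the same chain of equalities starting from $T_X^H(g(S_\delta(h)f))$ rather than from $T_X^H((hg)f)$. Your observation that the trace relation \eqref{tracecomm} is never used is accurate and consistent with the paper's argument.
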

\begin{proof}
Let $T^H$ be $\delta$-invariant. Then, we have
\begin{equation*}
T_{X}^H\left(g(S_\delta(h)f)\right)=T_{X}^H\left(g(\delta(h_1)S(h_2)f)\right)=T_{X}^H\left(h_1\left(g(S(h_2)f)\right)\right)=T_{X}^H\left((h_1g)(h_2S(h_3)f)\right)=T_{X}^H((hg)f)
\end{equation*} 
Conversely, suppose that the collection $T^H$ satisfies \eqref{invariance}. Then, for any $f' \in Hom_{\mathcal{D}_H}(X,X)$, we have
\begin{equation*}
T_X^H(hf')=T_X^H\left(f'(S_\delta(h)id_X)\right)=T_X^H\left(\delta(h_1)f'(\varepsilon(S(h_2))id_X)\right)=\delta(h)T_X^H(f')
\end{equation*}
\end{proof}

We are now ready to prove that there is a characteristic map from the  cyclic cohomology of the Hopf algebra $H$ taking values in the  cyclic cohomology of the $H$-category $\mathcal{D}_H$.

\begin{theorem}\label{charmap}
Let $H$ be a Hopf algebra with a modular pair in involution $(\delta,\sigma)$. Let $\mathcal{D}_H$ be a left $H$-category and let $T^H$ be a $\delta$-invariant $\sigma$-trace on $\mathcal{D}_H$. Then, we have a characteristic map $\gamma^\bullet:C^\bullet \left(H_{(\delta,\sigma)}\right) \longrightarrow CN^\bullet(\mathcal{D}_H)$ of $\Lambda$-modules given by
\begin{equation*}
\big(\gamma^n(h^1 \otimes \ldots \otimes h^n)\big)(f^0 \otimes \ldots \otimes f^{n}):=T_{X_0}^H\big(f^0(h^1f^1)\ldots (h^nf^n)\big)
\end{equation*}
for $h^1 \otimes \ldots \otimes h^n \in H^{\otimes n}$ and $f^0 \otimes \ldots \otimes f^{n} \in Hom_{\mathcal{D}_H}(X_1,X_0) \otimes Hom_{\mathcal{D}_H}(X_2,X_1) \otimes \ldots \otimes Hom_{\mathcal{D}_H}(X_0,X_n)$. This induces a homomorphism in cyclic cohomology
\begin{equation*}
\gamma^\bullet:HC^\bullet_{(\delta,\sigma)}(H) \longrightarrow HC^\bullet(\mathcal{D}_H)
\end{equation*}
\end{theorem}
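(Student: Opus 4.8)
The plan is to verify directly that $\gamma^\bullet$ is a morphism of cocyclic modules, i.e.\ that it commutes with the three families of structure maps $(\delta_i,\sigma_i,\tau_n)$. Since the cyclic cohomology $HC^\bullet$ is a functor of the underlying $\Lambda$-module (cocyclic module), once we know $\gamma^\bullet$ is a map of $\Lambda$-modules the induced homomorphism $HC^\bullet_{(\delta,\sigma)}(H)\to HC^\bullet(\mathcal D_H)$ is automatic. So the entire content of the theorem reduces to three compatibility identities, checked level by level.

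First I would spell out the dualization. The maps on $C^\bullet(H_{(\delta,\sigma)})$ are described above as face maps $\delta_i$, degeneracies $\sigma_i$ and cyclic operator $\tau_n$ raising/lowering the tensor degree of $H^{\otimes n}$, while on $CN^\bullet(\mathcal D_H)$ the operators $\delta_i,\sigma_i,\tau_n$ are precomposition with the face/degeneracy/cyclic maps $d_i,s_i,t_n$ of the cyclic nerve. Because $\gamma^n$ takes an element of $H^{\otimes n}$ to a functional on $CN_n(\mathcal D_H)$, the required commutations take the shape
\begin{equation*}
\gamma^n\big(\delta_i^{H}(\omega)\big)=\delta_i\big(\gamma^{n-1}(\omega)\big)=\gamma^{n-1}(\omega)\circ d_i,\qquad
\gamma^n\big(\sigma_i^{H}(\omega)\big)=\gamma^{n+1}(\omega)\circ s_i,\qquad
\gamma^n\big(\tau_n^{H}(\omega)\big)=\gamma^{n}(\omega)\circ t_n,
\end{equation*}
and each is proved by evaluating both sides on a generator $f^0\otimes\cdots\otimes f^n$ and simplifying inside $T^H_{X_0}$. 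The face and degeneracy checks are essentially bookkeeping: applying $d_i$ merges $h^if^i$ with the neighbouring morphism and one uses axiom (iii) of Definition \ref{defH-cat}, $h(gf)=(h_1g)(h_2f)$, together with comultiplication to match the two sides; the degeneracy check inserts an $\mathrm{id}_X$ and uses axiom (ii), $h(\mathrm{id}_X)=\varepsilon(h)\mathrm{id}_X$, so that the inserted $\varepsilon$ in $\sigma_i^H$ on the Hopf side corresponds exactly to the inserted identity on the nerve side.

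The hard part will be the cyclic operator $\tau_n$, since this is where the $\delta$-twisted antipode $S_\delta$ and the grouplike $\sigma$ enter and where the trace property is genuinely used. Here $\tau_n^H(h^1\otimes\cdots\otimes h^n)=S_\delta(h^1)\cdot(h^2\otimes\cdots\otimes h^n\otimes\sigma)$, while $t_n$ on the nerve cyclically permutes $f^n$ to the front. Evaluating $\gamma^n(\tau_n^H(\omega))$ on $f^0\otimes\cdots\otimes f^n$ produces a term of the form $T^H_{X_0}\big(f^0\,(S_\delta(h^1)_{(1)}h^2 f^1)\cdots(\cdots h^n f^{n-1})(\sigma S_\delta(h^1)_{(\cdots)}f^n)\big)$ after distributing the $S_\delta(h^1)$ action via axiom (iii), and I would compare this with $\gamma^n(\omega)\circ t_n$, which is $T^H_{Y}\big(f^n(h^1 f^0)(h^2 f^1)\cdots(h^n f^{n-1})\big)$. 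Reconciling the two requires moving $f^n$ from the front to the back through the $\sigma$-trace property \eqref{tracecomm} and then absorbing the resulting $h^1$-action using exactly the $\delta$-invariance identity \eqref{invariance}, $T_X^H((hg)\circ f)=T_X^H(g\circ(S_\delta(h)f))$, established in the preceding lemma; the coassociativity and counit bookkeeping for the Sweedler components of $S_\delta(h^1)$ is the delicate calculation to get right. The single-object specialization, recovering \eqref{qe1.1}, is then immediate by taking $\mathcal D_H$ with one object whose endomorphism algebra is the $H$-module algebra $A$, so I would not treat it separately.
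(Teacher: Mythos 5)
Your proposal follows essentially the same route as the paper's proof: a direct level-by-level verification that $\gamma^\bullet$ commutes with the faces, degeneracies and cyclic operators, with the face and degeneracy checks resting on axioms (ii) and (iii) of Definition \ref{defH-cat} and the cyclic check resting on the trace identity \eqref{tracecomm} together with the $\delta$-invariance identity \eqref{invariance} from the preceding lemma. The only slip in your sketch is the order of factors in the last slot after distributing $S_\delta(h^1)$ diagonally --- since $\Delta(S(h))=S(h_2)\otimes S(h_1)$ the final factor is $\big(S(h^1_2)\sigma\big)f^n$ rather than $\sigma S_\delta(h^1)_{(\cdots)}f^n$ --- but this is exactly the Sweedler bookkeeping you flag as the delicate point, and it works out as you predict.
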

\begin{proof}
We need to show that the following identities hold:
\begin{align}
\gamma^{n}\delta_i&=\delta_i\gamma^{n-1} \quad 0 \leq i \leq n \label{1}\\ 
\gamma^n \sigma_i &=\sigma_i\gamma^{n+1} \quad 0 \leq i \leq n \label{2}\\ 
\gamma^n\tau_n &= \tau_n\gamma^n \label{3}
\end{align}
We first verify \eqref{1}. The case $i=0$ is straightforward. For $i=n$, we have
\begin{align*}
\big(\gamma^{n}(\delta_n(h^1 \otimes \ldots \otimes h^{n-1}))\big)(f^0 \otimes \ldots \otimes f^{n})&=\big(\gamma^{n}(h^1 \otimes \ldots \otimes h^{n-1} \otimes \sigma)\big)(f^0 \otimes \ldots \otimes f^{n})\\
&=T_{X_0}^H\big(f^0(h^1f^1)\ldots (h^{n-1}f^{n-1})(\sigma f^n)\big)\\
&=T_{X_n}^H\big(f^nf^0(h^1f^1)\ldots (h^{n-1}f^{n-1})\big) \quad ~~~~(\text{by}~ \eqref{tracecomm})\\
&= \big(\gamma^{n-1}(h^1 \otimes \ldots \otimes h^{n-1})\big)\big((f^nf^0)\otimes f^1\otimes \ldots \otimes f^{n-1}\big)\\
&=\big(\delta_n (\gamma^{n-1}(h^1 \otimes \ldots \otimes h^{n-1}))\big)(f^0 \otimes \ldots \otimes f^{n})
\end{align*}
For $1 \leq i \leq n-1$, we have
\begin{align*}
\big(\gamma^{n}(\delta_i(h^1 \otimes \ldots \otimes h^{n-1}))\big)(f^0 \otimes \ldots \otimes f^{n})&= \big(\gamma^{n}(h^1 \otimes \ldots \otimes h^i_1 \otimes h^i_2 \otimes \ldots \otimes h^{n-1})\big)(f^0 \otimes \ldots \otimes f^{n})\\
&= T_{X_0}^H\big(f^0(h^1f^1)\ldots (h^i_1f^i)(h^i_2f^{i+1}) \ldots (h^{n-1}f^{n})\big)\\
&=T^H_{X_0}(f^0(h^1f^1)\ldots (h^i(f^if^{i+1})) \ldots (h^{n-1}f^{n}))\\
&=\big(\delta_i (\gamma^{n-1}(h^1 \otimes \ldots \otimes h^{n-1}))\big)(f^0 \otimes \ldots \otimes f^{n})
\end{align*} 
Next we verify \eqref{2}. For $0 \leq i \leq n-1$, we have
\begin{align*}
\big(\gamma^{n}(\sigma_i(h^1 \otimes \ldots \otimes h^{n+1}))\big)(f^0 \otimes \ldots \otimes f^{n})&=\big(\gamma^{n}(h^1 \otimes \ldots h^i \otimes \varepsilon(h^{i+1}) \otimes h^{i+2} \otimes \ldots \otimes h^{n+1})\big)(f^0 \otimes \ldots \otimes f^{n})\\
&=T_{X_0}^H\big(f^0(h^1f^1)\ldots(\varepsilon(h^{i+1})id_{X_{i+1}}) \ldots (h^{n+1}f^{n})\big)\\
&=T_{X_0}^H\big(f^0(h^1f^1)\ldots(h^{i+1}id_{X_{i+1}}) \ldots (h^{n+1}f^{n})\big)\\
&=\big(\sigma_i (\gamma^{n+1}(h^1 \otimes \ldots \otimes h^{n+1}))\big)(f^0 \otimes \ldots \otimes f^{n})
\end{align*}
It now remains to verify \eqref{3}. For that, we have
\begin{align*}
\big(\gamma^n (\tau_n(h^1 \otimes \ldots \otimes h^n))\big)(f^0 \otimes \ldots \otimes f^{n})&=\Big(\gamma^n \big(\delta(h^1_1)S(h^1_2)(h^2 \otimes \ldots \otimes h^n\otimes \sigma)\big)\Big)(f^0 \otimes \ldots \otimes f^{n})\\
&=\Big(\gamma^n\big(\delta(h^1_1)\left(S(h^1_{n+1})h^2 \otimes \ldots \otimes S(h^1_3)h^n \otimes S(h^1_2)\sigma\right)\big)\Big)(f^0 \otimes \ldots \otimes f^{n})\\
&=T_{X_0}^H\Big(\delta(h^1_1)f^0 \big(S(h^1_{n+1})h^2f^1\big)\ldots \big(S(h^1_3)h^nf^{n-1}\big)\big(S(h^1_2)\sigma f^{n}\big)\Big)\\
&=T_{X_0}^H\Big(\delta(h^1_1)f^0 \big[S(h^1_2)\big((h^2f^1)\ldots (h^nf^{n-1})(\sigma f^{n})\big)\big]\Big)\\
&=T_{X_0}^H\Big(f^0 \big[S_\delta(h^1)\big((h^2f^1)\ldots (h^nf^{n-1})(\sigma f^{n})\big)\big]\Big)\\
&=T_{X_0}^H\big( (h^1f^0)(h^2f^1)\ldots (h^nf^{n-1})(\sigma f^{n})\big) \quad~~~~~~~~~ (\text{by}~ \eqref{invariance})\\
&=T_{X_n}^H\big(f^n (h^1f^0)(h^2f^1)\ldots (h^nf^{n-1})\big) \quad~~~~~~~~~~~~~ (\text{by}~ \eqref{tracecomm})\\
&=\big(\tau^n (\gamma_n(h^1 \otimes \ldots \otimes h^n))\big)(f^0 \otimes \ldots \otimes f^{n})
\end{align*}
This completes the proof.
\end{proof}

\section{Invariant traces as Hopf-cyclic cocycles}\label{Hopfcyclic}
We continue with $H$ being a Hopf algebra equipped with a modular pair $(\delta,\sigma)$ in involution. The key to the construction of the characteristic map 
$\gamma^\bullet:HC^\bullet_{(\delta,\sigma)}(H) \longrightarrow HC^\bullet(\mathcal{D}_H)$ in Theorem \ref{charmap} is the $\delta$-invariant
$\sigma$-trace $T^H$ on the left $H$-category $\mathcal D_H$.  In this section, we will describe such traces as Hopf-cyclic cocycles for $\mathcal D_H$
taking values in a certain SAYD  module.

\smallskip
Since $\mathcal D_H$ is a left $H$-category, it is clear from the definition in \eqref{cnerf} that each $\{CN_n(\mathcal{D}_H)\}_{n\geq 0}$ is a left $H$-module via the diagonal action of $H$.

\begin{lemma} 
Let $M$ be a right $H$-module. For each $n\geq 0$, $M \otimes CN_n(\mathcal{D}_H)$ is a right $H$-module with action determined by
$$(m \otimes f^0 \otimes \ldots \otimes f^{n})h:= mh_1 \otimes S(h_2)(f^0 \otimes \ldots \otimes f^{n})$$
for any $m \in M$, $f^0 \otimes \ldots \otimes f^{n} \in CN_n(\mathcal D_H)$ and $h \in H$.
\end{lemma}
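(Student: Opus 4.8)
The plan is to verify directly that the displayed formula satisfies the two defining axioms of a right $H$-module: compatibility with the unit $1 \in H$ and associativity with respect to the multiplication of $H$. Because $M \otimes CN_n(\mathcal{D}_H)$ is a tensor product over the base field $k$, and both the comultiplication $\Delta$ and the antipode $S$ are $k$-linear, the prescribed assignment extends to a well-defined $k$-linear map $\big(M \otimes CN_n(\mathcal{D}_H)\big) \otimes H \longrightarrow M \otimes CN_n(\mathcal{D}_H)$; no balancing condition has to be checked, so well-definedness is immediate. It therefore suffices to test the axioms on elements $m \otimes w$ with $w = f^0 \otimes \ldots \otimes f^n$.

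The unit axiom is immediate: from $\Delta(1) = 1 \otimes 1$ and $S(1) = 1$ one reads off $(m \otimes w)\cdot 1 = m \otimes w$. For associativity I would expand both sides of the identity $\big((m \otimes w)\cdot h\big)\cdot h' = (m \otimes w)\cdot(hh')$ in Sweedler notation. Using that $M$ is a right $H$-module and that $CN_n(\mathcal{D}_H)$ is a left $H$-module under the diagonal action, the left-hand side unwinds to
\[
\big((m \otimes w)\cdot h\big)\cdot h' = m\,h_1h'_1 \otimes \big(S(h'_2)S(h_2)\big)w,
\]
whereas the right-hand side, using that $\Delta$ is an algebra map so that $(hh')_1 \otimes (hh')_2 = h_1h'_1 \otimes h_2h'_2$, becomes
\[
(m \otimes w)\cdot(hh') = m\,h_1h'_1 \otimes S(h_2h'_2)\,w.
\]
These two expressions coincide precisely because the antipode is an anti-homomorphism, giving $S(h'_2)S(h_2) = S(h_2h'_2)$.

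The single point that requires any care — and hence what I would flag as the \emph{main obstacle}, although it is a mild one — is exactly this order reversal produced by the antipode: it is what turns the $S$-twisted left diagonal action on $CN_n(\mathcal{D}_H)$ into a genuine \emph{right} action, and it explains why the twist must be by $S$ (rather than, say, by $S^{-1}$ or the identity). Beyond the already-recorded fact that $CN_n(\mathcal{D}_H)$ carries a left diagonal $H$-action, no structure of the category $\mathcal{D}_H$ enters the verification.
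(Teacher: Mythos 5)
Your proposal is correct and follows essentially the same route as the paper, whose entire proof is the one-line observation that the claim "follows from the fact that the antipode $S$ is an anti-algebra homomorphism and $\Delta(1_H)=1_H\otimes 1_H$"; your computation simply writes out the associativity check $S(h'_2)S(h_2)=S(h_2h'_2)$ and the unit check that this remark compresses. No discrepancies to flag.
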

\begin{proof}
This follows from the fact that the antipode $S$ is an anti-algebra homomorphism and $\Delta(1_H)=1_H \otimes 1_H$.
\end{proof}

We now recall the notion of a SAYD module from \cite[Definition 2.1]{hkrs2}.
\begin{definition}
Let $H$ be a Hopf algebra with a bijective antipode $S$. A $k$-vector space $M$ is said to be a right-left anti-Yetter-Drinfeld module over $H$ if $M$ is a right $H$-module and a left $H$-comodule such that
\begin{equation}\label{SAYDcondi}
\rho(mh)=(mh)_{(-1)} \otimes (mh)_{(0)}= S(h_3)m_{(-1)}h_1 \otimes m_{(0)}h_2
\end{equation}
for all $m \in M$ and $h \in H$, where $\rho: M \longrightarrow H \otimes M,~ m \mapsto m_{(-1)} \otimes m_{(0)}$ is the coaction. Moreover, $M$ is said to be stable if $m_{(0)}m_{(-1)}=m$.
\end{definition}

We now take the Hopf-cyclic cohomology $HC^\bullet_H(\mathcal{D}_H,M)$ of an $H$-category $\mathcal{D}_H$ with coefficients in a stable anti-Yetter-Drinfeld (SAYD) module $M$ (see also \cite{kk}). This generalizes the construction of the Hopf-cyclic cohomology for $H$-module algebras with coefficients in a SAYD module (see \cite{hkrs}).  For each $n \geq 0$, we set 
\begin{equation*}
\begin{array}{ll}
C^n(\mathcal{D}_H,M):=Hom_k(M \otimes CN_n(\mathcal{D}_H),k)\\
C^n_H(\mathcal{D}_H,M):=Hom_H(M \otimes CN_n(\mathcal{D}_H),k)
\end{array}
\end{equation*}
where $k$ is considered as a right $H$-module via the counit. It is clear from the definition that an element in $C^n_H(\mathcal{D}_H,M)$ is a $k$-linear map $\phi:M \otimes CN_n(\mathcal{D}_H) \longrightarrow k$ satisfying
\begin{equation}\label{new3.2}
\phi\left(mh_1 \otimes S(h_2)(f^0 \otimes \ldots \otimes f^{n})\right)=\varepsilon(h)\phi(m \otimes f^0 \otimes \ldots \otimes f^{n})
\end{equation}

\begin{lemma}\label{lem2.3f}
Let $M$ be a right-left SAYD module over $H$ and let $\phi \in C^n_H(\mathcal{D}_H,M)$. Then,
\begin{equation*}
\phi\left(m \otimes h(f^0 \otimes f^1 \otimes \ldots \otimes f^n)\right)=\phi(mh \otimes f^0 \otimes f^1 \otimes \ldots \otimes f^n)
\end{equation*}
for any $m \in M$ and $f^0 \otimes f^1 \otimes \ldots \otimes f^n \in  CN_n(\mathcal D_H)$.
\end{lemma}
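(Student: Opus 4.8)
The plan is to prove the identity using nothing about the coaction on $M$: it is a purely module-theoretic consequence of the defining relation of $C^n_H(\mathcal D_H,M)$. Recall that $\phi\in C^n_H(\mathcal D_H,M)=Hom_H(M\otimes CN_n(\mathcal D_H),k)$ means exactly that $\phi$ is $H$-linear for the counit action on $k$, i.e. relation \eqref{new3.2}:
\begin{equation*}
\phi\big(mh_1\otimes S(h_2)(f^0\otimes\ldots\otimes f^n)\big)=\varepsilon(h)\,\phi\big(m\otimes f^0\otimes\ldots\otimes f^n\big),
\end{equation*}
valid for every $h\in H$ and every $f^0\otimes\ldots\otimes f^n\in CN_n(\mathcal D_H)$. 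The statement to be proved transfers an $H$-action from the $CN_n$-slot to the $M$-slot, so the idea is to manufacture the factor $S(h_2)$ demanded by \eqref{new3.2} by splitting the unit of $H$ through the antipode and then applying \eqref{new3.2} in reverse.

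Writing $\mathbf f=f^0\otimes\ldots\otimes f^n$, I would first rewrite the left-hand side using the counit axiom and then the antipode axiom on the middle leg of the iterated coproduct $\Delta^{(2)}(h)=h_1\otimes h_2\otimes h_3$:
\begin{equation*}
\phi(mh\otimes \mathbf f)=\phi\big(mh_1\,\varepsilon(h_2)\otimes \mathbf f\big)=\phi\big(mh_1\otimes S(h_2)h_3\,\mathbf f\big)=\phi\big(mh_1\otimes S(h_2)(h_3\mathbf f)\big),
\end{equation*}
where I use $\varepsilon(h_2)1_H=S(h_2)h_3$ together with the fact that $CN_n(\mathcal D_H)$ is a unital left $H$-module, so that $1_H\mathbf f=\mathbf f$ and the action is associative. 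Now I apply \eqref{new3.2} with the single element $x=h_1$ playing the role of $h$ and with $h_2\mathbf f$ playing the role of $f^0\otimes\ldots\otimes f^n$, summing over $\Delta(h)=h_1\otimes h_2$; this instantiation is legitimate since \eqref{new3.2} holds for every element of $H$ and of the cyclic nerve. By coassociativity $(h_1)_1\otimes(h_1)_2\otimes h_2=h_1\otimes h_2\otimes h_3$, so the left-hand side of that instance is exactly the last expression above, while its right-hand side collapses through the counit to
\begin{equation*}
\varepsilon(h_1)\,\phi(m\otimes h_2\mathbf f)=\phi\big(m\otimes (\varepsilon(h_1)h_2)\mathbf f\big)=\phi(m\otimes h\mathbf f),
\end{equation*}
which is the desired equality.

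The only point that genuinely requires care is the bookkeeping of the Sweedler indices in the final step: one must check that feeding the correlated components $h_1$ and $h_2\mathbf f$ into \eqref{new3.2} and then re-expanding $\Delta(h_1)$ by coassociativity reproduces precisely the three-fold splitting $h_1\otimes h_2\otimes h_3$ that was used to create the term $S(h_2)(h_3\mathbf f)$. Everything else is a routine application of the counit and antipode axioms and of the unitality and associativity of the left $H$-module structure on $CN_n(\mathcal D_H)$. I note in passing that neither the stability nor the anti-Yetter--Drinfeld condition on $M$ is actually used; the statement holds for any right $H$-module $M$ and any $\phi\in C^n_H(\mathcal D_H,M)$.
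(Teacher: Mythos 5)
Your proof is correct, and its decisive step is the same one the paper uses: rewrite $\phi(mh\otimes \mathbf f)$ as $\phi\big(mh_1\otimes S(h_2)h_3\mathbf f\big)$ and then apply the $H$-linearity relation \eqref{new3.2} to the correlated element $h_1\otimes h_2\mathbf f$, with coassociativity guaranteeing that the Sweedler indices match. Your justification that this instantiation is legitimate (both sides of \eqref{new3.2} are bilinear in $(h,\mathbf f)$, hence agree on arbitrary elements of $H\otimes CN_n(\mathcal D_H)$, not just pure tensors) is exactly the point that needs care, and you handle it properly. Where you genuinely diverge from the paper is in how you reach the expression $\phi\big(mh_1\otimes S(h_2)h_3\mathbf f\big)$: the paper gets there by first invoking stability to write $mh=(mh)_{(0)}(mh)_{(-1)}$, expanding via the anti-Yetter--Drinfeld condition \eqref{SAYDcondi}, and then collapsing everything back with $h_2S(h_3)=\varepsilon(h_2)$ and stability again --- a round trip that returns precisely to $mh_1\varepsilon(h_2)=mh$ --- whereas you obtain the same expression in one line from the counit and antipode axioms together with unitality of the $H$-action on $CN_n(\mathcal D_H)$. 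Your closing observation is therefore accurate and worth keeping: the lemma uses only that $\phi$ is $H$-linear for the twisted action on $M\otimes CN_n(\mathcal D_H)$, so it holds for an arbitrary right $H$-module $M$; the stability and AYD hypotheses, despite being cited in the paper's proof, do no logical work here. The trade-off is purely one of presentation: the paper's detour keeps the computation parallel to the later arguments where stability genuinely matters (e.g.\ the verification that $\tau_n^{n+1}=id$), while your version isolates the minimal hypotheses.
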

\begin{proof}
Using the stability of $M$, we have
\begin{equation*}
\begin{array}{lll}
\phi(mh \otimes f^0 \otimes \ldots \otimes f^n)&= \phi\left((mh)_{(0)} (mh)_{(-1)} \otimes f^0 \otimes \ldots \otimes f^n\right)\\
&= \phi\left((m_{(0)}h_2)((S(h_3)m_{(-1)}h_1) \otimes f^0 \otimes \ldots \otimes f^n\right)& (\text{by}~ \eqref{SAYDcondi})\\
&= \phi\left((m_{(0)}m_{(-1)}h_1 \otimes \varepsilon(h_2)(f^0 \otimes \ldots \otimes f^n)\right)\\
&= \phi\left(mh_1 \otimes S(h_2)h_3(f^0 \otimes \ldots \otimes f^n)\right)\\
&= \varepsilon(h_1)\phi\left(m \otimes h_2(f^0 \otimes \ldots \otimes f^n)\right)& (\text{by}~ \eqref{new3.2})\\
&= \phi\left(m \otimes h(f^0 \otimes \ldots \otimes f^n)\right)
\end{array}
\end{equation*}
\end{proof}

Using the stability of $M$, we also observe that 
\begin{equation}\label{2obsv}m_{(0)}S^{-1}(m_{(-1)})=m_{(0)(0)}m_{(0)(-1)}S^{-1}(m_{(-1)})=m_{(0)}m_{(-1)2}S^{-1}(m_{(-1)1})=m_{(0)}\varepsilon(m_{(-1)})=m
\end
{equation}

We now recall that a (co)simplicial module is said to be para-(co)cyclic if all the relations for a (co)cyclic module are satisfied except $\tau_n^{n+1}=id$ (see, for instance \cite{kk}).

\begin{prop}\label{prop2.3} Let $\mathcal{D}_H$ be a left $H$-category and let $M$ be a right-left SAYD module over $H$. Then, 

\smallskip
(1) we have a para-cocyclic module $C^\bullet(\mathcal{D}_H,M):=\{C^n(\mathcal{D}_H,M)\}_{n \geq 0}$ with the following structure maps  
\begin{align*}
(\delta_i\phi)(m \otimes f^0 \otimes \ldots \otimes f^{n})&= \begin{cases}
\phi(m \otimes f^0 \otimes \ldots \otimes f^if^{i+1} \otimes \ldots \otimes f^{n}) \quad~~~~~~~~~~~~~~~~ 0 \leq i \leq n-1\\
\phi\big(m_{(0)} \otimes \big(S^{-1}(m_{(-1)})f^n\big)f^0 \otimes \ldots \otimes f^{n-1}\big) \quad~~~~~~~~~~~~i=n\\
\end{cases}\\
\vspace{.2cm}
(\sigma_i\psi)(m \otimes f^0 \otimes \ldots \otimes f^{n})&= \begin{cases} \psi(m \otimes f^0 \otimes \ldots \otimes f^i \otimes id_{X_{i+1}}\otimes f^{i+1} \otimes \ldots \otimes f^{n}) \quad~~~~~ 0 \leq i \leq n-1\\
\psi(m \otimes f^0 \otimes \ldots \otimes f^{n} \otimes id_{X_{0}}) \quad~~~~~~~~~~~~~~~~~~~~~~~~~~ i=n\\
\end{cases}\\ 
\vspace{.2cm}
(\tau_n\varphi)(m \otimes f^0 \otimes \ldots \otimes f^{n})&=\varphi\big(m_{(0)} \otimes S^{-1}(m_{(-1)})f^n \otimes f^0 \otimes \ldots \otimes f^{n-1}\big)
\end{align*}
for any $\phi \in C^{n-1}(\mathcal{D}_H,M)$, $\psi \in C^{n+1}(\mathcal{D}_H,M)$, $\varphi \in C^{n}(\mathcal{D}_H,M)$, $m \in M$ and $ f^0 \otimes \ldots \otimes f^n \in Hom_{\mathcal{D}_H}(X_1,X_0) \otimes Hom_{\mathcal{D}_H}(X_2,X_1) \otimes \ldots \otimes Hom_{\mathcal{D}_H}(X_0,X_n)$.

\smallskip
(2) by restricting to right $H$-linear morphisms $C^n_H(\mathcal{D}_H,M)=Hom_H(M \otimes CN_n(\mathcal{D}_H),k),$ we obtain a cocyclic module $C^\bullet_H(\mathcal{D}_H,M):=\{C^n_H(\mathcal{D}_H,M)\}_{n \geq 0}$. 
\end{prop}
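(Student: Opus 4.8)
The plan is to realize the given cochain operators as the $k$-linear duals of operators on the chain-level objects $M\otimes CN_n(\mathcal D_H)$, and to verify the (para-)(co)cyclic relations there. Concretely, for $0\le i\le n-1$ let $d_i$ compose $f^i$ with $f^{i+1}$; let $d_n(m\otimes f^0\otimes\cdots\otimes f^n)=m_{(0)}\otimes (S^{-1}(m_{(-1)})f^n)f^0\otimes f^1\otimes\cdots\otimes f^{n-1}$; let $s_i$ insert identity morphisms; and let $t_n(m\otimes f^0\otimes\cdots\otimes f^n)=m_{(0)}\otimes S^{-1}(m_{(-1)})f^n\otimes f^0\otimes\cdots\otimes f^{n-1}$. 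The maps $\delta_i,\sigma_i,\tau_n$ of the statement are exactly $\phi\mapsto\phi\circ d_i$, $\psi\mapsto\psi\circ s_i$, $\varphi\mapsto\varphi\circ t_n$, so part (1) is equivalent to showing that $\{M\otimes CN_n(\mathcal D_H)\}_{n\ge 0}$ with $(d_i,s_i,t_n)$ is a para-cyclic module (all cyclic relations except $t_n^{n+1}=\mathrm{id}$), and part (2) to showing that $\tau_n^{n+1}=\mathrm{id}$ after restricting to $H$-linear cochains.

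For part (1), I would split the relations into two groups. The purely simplicial relations among those $d_i,s_i$ with indices in $\{0,\dots,n-1\}$, together with $s_n$, act only on the morphism tensor factors and leave the coefficient $m$ untouched; these are inherited verbatim from the cyclic nerve $CN_\bullet(\mathcal D_H)$ of McCarthy, which is already cyclic. The relations that must be checked by hand are precisely those in which the twisted face $d_n$ or the cyclic operator $t_n$ participates, since these carry the coaction factor $S^{-1}(m_{(-1)})$. Here the three tools are: coassociativity of the coaction, written $m_{(-1)1}\otimes m_{(-1)2}\otimes m_{(0)}=m_{(-1)}\otimes m_{(0)(-1)}\otimes m_{(0)(0)}$; the anti-comultiplicativity of the antipode, $\Delta S^{-1}(a)=S^{-1}(a_2)\otimes S^{-1}(a_1)$; and the composition axiom $h(gf)=(h_1g)(h_2f)$ of Definition \ref{defH-cat}(iii). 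For instance, one checks $d_0t_n=d_n$ immediately, while $d_nt_n=t_{n-1}d_{n-1}$ and $s_0t_n=t_{n+1}^2s_n$ follow after splitting $S^{-1}(m_{(-1)})$ along the coproduct and re-associating. I would emphasize that stability of $M$ is deliberately not used at this stage, which is exactly why only the para-cyclic relations are available here.

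For part (2), the only missing relation is $\tau_n^{n+1}=\mathrm{id}$ on $C^n_H(\mathcal D_H,M)$. I would first establish the closed form
\[
t_n^{n+1}(m\otimes f^0\otimes\cdots\otimes f^n)=m_{(0)}\otimes S^{-1}(m_{(-1)})\big(f^0\otimes\cdots\otimes f^n\big),
\]
where $S^{-1}(m_{(-1)})$ acts diagonally via the iterated coproduct; this is proved by iterating $t_n$ and repeatedly applying coassociativity together with $\Delta S^{-1}(a)=S^{-1}(a_2)\otimes S^{-1}(a_1)$, so that the $n+1$ accumulated antipode factors reassemble, in the correct reversed order, into $\Delta^{(n)}S^{-1}(m_{(-1)})$. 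Dualizing, for $\phi\in C^n_H(\mathcal D_H,M)$ we then have $(\tau_n^{n+1}\phi)(m\otimes f^0\otimes\cdots\otimes f^n)=\phi\big(m_{(0)}\otimes S^{-1}(m_{(-1)})(f^0\otimes\cdots\otimes f^n)\big)$. By Lemma \ref{lem2.3f} the diagonal $H$-action can be transferred onto the coefficient, giving $\phi\big(m_{(0)}S^{-1}(m_{(-1)})\otimes f^0\otimes\cdots\otimes f^n\big)$, and the stability identity \eqref{2obsv}, namely $m_{(0)}S^{-1}(m_{(-1)})=m$, collapses this to $\phi(m\otimes f^0\otimes\cdots\otimes f^n)$. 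Hence $\tau_n^{n+1}=\mathrm{id}$ and $C^\bullet_H(\mathcal D_H,M)$ is a genuine cocyclic module.

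The main obstacle is the Sweedler-index bookkeeping in the two places where the coaction and the antipode interact: verifying the $t_n$-compatibility relations (especially $d_nt_n=t_{n-1}d_{n-1}$ and $s_0t_n=t_{n+1}^2s_n$) in part (1), and pinning down the precise diagonal form of $t_n^{n+1}$ in part (2). In both cases the delicate point is tracking how the iterated coaction on $m$ matches, index by index and in the correct reversed order, the iterated $S^{-1}$ produced by repeated use of $\Delta S^{-1}(a)=S^{-1}(a_2)\otimes S^{-1}(a_1)$; once the ordering is aligned via coassociativity, the remaining manipulations are the routine composition-axiom re-associations.
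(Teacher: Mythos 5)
Your reduction to the predual chain-level operators $(d_i,s_i,t_n)$ on $M\otimes CN_\bullet(\mathcal D_H)$ is sound and is essentially the route the paper takes: the paper verifies the same relations ($\tau_n\delta_0=\delta_n$, $\tau_n\delta_i=\delta_{i-1}\tau_{n-1}$, $\tau_n\sigma_0=\sigma_n\tau^2_{n+1}$, etc.) directly on cochains, and it later records your chain-level para-cyclic module verbatim as $C_\bullet(\mathcal D_H,M)$ in Section 6. Your closed form for $t_n^{n+1}$ as the diagonal action of $S^{-1}(m_{(-1)})$, followed by Lemma \ref{lem2.3f} and the stability identity \eqref{2obsv}, is exactly the paper's argument for $\tau_n^{n+1}=\mathrm{id}$.

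There is, however, one genuine omission in part (2). You assert that the only thing left to check after restricting to $C^n_H(\mathcal D_H,M)=Hom_H(M\otimes CN_n(\mathcal D_H),k)$ is $\tau_n^{n+1}=\mathrm{id}$, but one must first show that the operators actually preserve this subspace, i.e.\ that $\tau_n\phi$ and $\delta_n\phi$ are again right $H$-linear when $\phi$ is. This is not automatic from your dualization picture: the untwisted $d_i$ ($i<n$) and the $s_i$ are morphisms of right $H$-modules for the action $(m\otimes f^0\otimes\cdots\otimes f^n)h=mh_1\otimes S(h_2)(f^0\otimes\cdots\otimes f^n)$, so precomposition with them preserves $Hom_H(-,k)$; but $t_n$ and $d_n$ are \emph{not} right $H$-module maps for this action (one finds $t_n\big((m\otimes\omega)h\big)$ carries a factor $S^{-1}(h_1)S^{-1}(m_{(-1)})$ paired with $m_{(0)}h_2$, whereas $\big(t_n(m\otimes\omega)\big)h$ carries $S(h_{n+2})S^{-1}(m_{(-1)})$ paired with $m_{(0)}h_1$, and these differ as elements of $M\otimes CN_n$). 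The $H$-linearity of $\tau_n\phi$ therefore has to be checked by hand for $H$-linear $\phi$, using the anti-Yetter--Drinfeld condition \eqref{SAYDcondi} together with Lemma \ref{lem2.3f}; in the paper this is the longest computation in the proof of part (2), and it is the one place where the AYD compatibility (as opposed to stability) is used in an essential way. Your proposal should include this step; everything else is in order.
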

\begin{proof}
(1) It is easy to verify that $\delta_i$ and $\sigma_i$ for $0 \leq i \leq n$ define a cosimplicial structure on $C^\bullet(\mathcal{D}_H,M)$. Therefore, it remains to check that the following identities hold:
\begin{equation*}
\begin{array}{ll}
\tau_n\delta_i&=\delta_{i-1}\tau_{n-1} \qquad 1 \leq i \leq n\\
\tau_n\delta_0&=\delta_n\\
\tau_n\sigma_i&=\sigma_{i-1}\tau_{n+1} \qquad 1 \leq i \leq n\\
\tau_n\sigma_0&=\sigma_n\tau^2_{n+1}
\end{array}
\end{equation*}
For $1 \leq i < n$, we have
\begin{equation*}
\begin{array}{ll}
\left(\tau_n\delta_i(\phi)\right)(m \otimes f^0 \otimes \ldots \otimes f^{n})&=\delta_i(\phi)\left(m_{(0)} \otimes S^{-1}(m_{(-1)})f^n \otimes f^0 \otimes \ldots \otimes f^{n-1}\right)\\
&= \phi\left(m_{(0)} \otimes S^{-1}(m_{(-1)})f^n \otimes f^0 \otimes \ldots \otimes f^{i-1}f^{i} \otimes \ldots \otimes f^{n-1}\right)\\
&=\left(\delta_{i-1}\tau_{n-1}(\phi)\right)(m \otimes f^0 \otimes \ldots \otimes f^{n})
\end{array}
\end{equation*}
and
\begin{align*}
\left(\tau_n\delta_n(\phi)\right)(m \otimes f^0 \otimes \ldots \otimes f^{n}))&=\delta_n(\phi)\left(m_{(0)} \otimes S^{-1}(m_{(-1)})f^n \otimes f^0 \otimes \ldots \otimes f^{n-1}\right)\\
&=\phi\big(m_{(0)(0)} \otimes \big(S^{-1}(m_{(0)(-1)})f^{n-1}\big)\big(S^{-1}(m_{(-1)})f^n\big) \otimes f^0 \otimes \ldots \otimes f^{n-2}\big)\\
&=\phi\big(m_{(0)} \otimes \big(S^{-1}(m_{(-1)2})f^{n-1}\big)\big(S^{-1}(m_{(-1)1})f^n\big) \otimes f^0 \otimes \ldots \otimes f^{n-2}\big)\\
&=\phi\big(m_{(0)} \otimes S^{-1}(m_{(-1)})(f^{n-1}f^n) \otimes f^0 \otimes \ldots \otimes f^{n-2}\big)\\
&=\left(\delta_{n-1}\tau_{n-1}(\phi)\right)(m \otimes f^0 \otimes \ldots \otimes f^{n})
\end{align*}
It follows immediately by definition that $\tau_n\delta_0=\delta_n$. Next, we verify the identities involving the degeneracies. For $i=n$, we have
\begin{equation*}
\begin{array}{ll}
\left(\sigma_{n-1}\tau_{n+1}(\psi)\right)(m \otimes f^0 \otimes \ldots \otimes f^{n})&=\tau_{n+1}(\psi)(m \otimes f^0 \otimes \ldots \otimes f^{n-1} \otimes id_{X_n} \otimes f^{n})\\
&=\psi\big(m_{(0)} \otimes S^{-1}(m_{(-1)})f^n \otimes f^0 \otimes \ldots \otimes f^{n-1} \otimes id_{X_n} \big)\\
&=\left(\tau_n\sigma_n(\psi)\right)(m \otimes f^0 \otimes \ldots \otimes f^{n})
\end{array}
\end{equation*}
The case $1 \leq i <n$ is easy to verify. Further, we have
\begin{equation*}
\begin{array}{ll}
\left(\sigma_n\tau^2_{n+1}(\varphi)\right)(m \otimes f^0 \otimes \ldots \otimes f^{n})&=\tau^2_{n+1}(\varphi)(m \otimes f^0 \otimes \ldots \otimes f^{n} \otimes id_{X_0})\\
&= \tau_{n+1}(\varphi)\big(m_{(0)} \otimes S^{-1}(m_{(-1)})id_{X_0} \otimes f^0 \otimes \ldots \otimes f^{n}\big)\\
&= \tau_{n+1}(\varphi)\big(m_{(0)} \otimes \varepsilon\big(S^{-1}(m_{(-1)})\big)id_{X_0} \otimes f^0 \otimes \ldots \otimes f^{n}\big)\\
&= \tau_{n+1}(\varphi)\big(m_{(0)} \otimes \varepsilon(m_{(-1)})id_{X_0} \otimes f^0 \otimes \ldots \otimes f^{n}\big)\\
&= \tau_{n+1}(\varphi)(m \otimes id_{X_0} \otimes f^0 \otimes \ldots \otimes f^{n})\\
&=\varphi\big(m_{(0)} \otimes S^{-1}(m_{(-1)})f^n \otimes id_{X_0} \otimes f^0 \otimes \ldots \otimes f^{n-1}\big)\\
&= \left(\tau_n\sigma_0 (\varphi)\right)(m \otimes f^0 \otimes \ldots \otimes f^{n})
\end{array}
\end{equation*}

\smallskip
(2) Using (1), it now remains to prove that the structure maps are well-defined and $\tau_n^{n+1}=id$. Let us first verify that the cyclic operator $\tau_n$ is well-defined, i.e., $\tau_n(\phi)$ is $H$-linear for each $\phi \in C^n_H(\mathcal{D}_H,M)$.
\begin{align*}
&\left(\tau_n(\phi)\right)\left(mh_1 \otimes S(h_2)(f^0 \otimes \ldots \otimes f^{n})\right)\\
&=\left(\tau_n(\phi)\right)\left(mh_1 \otimes S(h_{n+2})f^0 \otimes \ldots \otimes S(h_2)f^n\right)\\
&=\phi\left((mh_1)_{(0)} \otimes S^{-1}\left((mh_1)_{(-1)}\right)S(h_2)f^n \otimes S(h_{n+2})f^0 \otimes \ldots \otimes S(h_3)f^{n-1} \right)\\
&=\phi\left(m_{(0)}h_2 \otimes S^{-1}\left(S(h_3)m_{(-1)}h_1\right)S(h_4)f^n \otimes S(h_{n+4})f^0 \otimes \ldots \otimes S(h_5)f^{n-1} \right)\\
&=\phi\left(m_{(0)}h_2 \otimes S^{-1}(h_1)S^{-1}(m_{(-1)})h_3 S(h_4)f^n \otimes S(h_{n+4})f^0 \otimes \ldots \otimes S(h_5)f^{n-1} \right)\\
&=\phi\left(m_{(0)}h_2 \otimes S^{-1}(h_1)S^{-1}(m_{(-1)})f^n \otimes S(h_{n+2})f^0 \otimes \ldots \otimes S(h_3)f^{n-1} \right)\\
&=\phi\left(m_{(0)} \otimes h_2\big[S^{-1}(h_1)S^{-1}(m_{(-1)})f^n \otimes S(h_{n+2})f^0 \otimes \ldots \otimes S(h_3)f^{n-1} \big]\right) \quad (\text{by Lemma } \ref{lem2.3f})\\
&=\phi\left(m_{(0)} \otimes h_2S^{-1}(h_1)S^{-1}(m_{(-1)})f^n \otimes h_3S(h_{2n+2})f^0 \otimes \ldots \otimes h_{n+2}S(h_{n+3})f^{n-1}\right)\\
&=\varepsilon(h)\phi\left(m_{(0)} \otimes S^{-1}(m_{(-1)})f^n \otimes f^0 \otimes \ldots \otimes f^{n-1}\right)=\varepsilon(h)\left(\tau_n(\phi)\right)(m \otimes f^0 \otimes \ldots \otimes f^{n})
\end{align*}

Similarly, it may be verified that the degeneracies are also well-defined. Next, we verify that the face maps are well-defined. For $0 \leq i <n$, we have
\begin{equation*}
\begin{array}{ll}
&\left(\delta_i(\phi)\right)\left(mh_1 \otimes S(h_2)(f^0 \otimes \ldots \otimes f^{n})\right)\\
&\quad =\left(\delta_i(\phi)\right)\left(mh_1 \otimes S(h_{n+2})f^0 \otimes \ldots \otimes S(h_{n+2-i})f^i \otimes S(h_{n+2-(i+1)})f^{i+1} \otimes \ldots \otimes S(h_2)f^{n} \right)\\
&\quad =\phi\left(mh_1 \otimes S(h_{n+1})f^0 \otimes \ldots \otimes S(h_{n+2-(i+1)})(f^if^{i+1}) \otimes \ldots \otimes S(h_2)f^{n} \right)\\
&\quad =\phi\left(mh_1 \otimes S(h_2)(f^0 \otimes \ldots \otimes f^if^{i+1} \otimes \ldots \otimes f^n)\right)\\
& \quad = \varepsilon(h)\phi(m \otimes f^0 \otimes \ldots \otimes f^if^{i+1} \otimes \ldots \otimes f^n)=\varepsilon(h)\left(\delta_i(\phi)\right)\left(m\otimes f^0\otimes ...\otimes f^n\right)
\end{array}
\end{equation*}
Since $\delta_n=\tau_n\delta_0$, the preceeding computations show that $\delta_n$ is also well-defined. 

\smallskip
Further, using the stability of $M$, we have
\begin{equation*}
\begin{array}{ll}
\tau_n^{n+1}(\phi)(m \otimes f^0 \otimes \ldots \otimes f^{n})&=\phi(m_{(0)} \otimes S^{-1}((m_{(-1)})_{n+1})f^0 \otimes \ldots \otimes S^{-1}(m_{(-1)1})f^{n})\\
&= \phi\big(m_{(0)} \otimes S^{-1}(m_{(-1)})(f^0 \otimes \ldots \otimes f^{n})\big)\\
&= \phi\big(m_{(0)} S^{-1}(m_{(-1)})\otimes f^0 \otimes \ldots \otimes f^{n}\big)\quad (\text{by Lemma } \ref{lem2.3f})\\
&= \phi(m \otimes f^0 \otimes \ldots \otimes f^{n})\quad (\text{by } \eqref{2obsv})
\end{array}
\end{equation*}
This completes the proof.
\end{proof}

The cohomology of the cocyclic module $C^\bullet_H(\mathcal{D}_H,M)$ is referred to as the Hopf-cyclic cohomology of the $H$-category $\mathcal{D}_H$ with coefficients in the SAYD module $M$. The corresponding cohomology groups are denoted by $HC_H^\bullet(\mathcal{D}_H,M)$.

\begin{remark}
(1) As $k$ contains $\mathbb{Q}$, we recall that the cohomology of a cocyclic module $\mathscr{C}$ can be expresed alternatively as the cohomology of the following complex (see, for instance \cite[2.5.9]{Loday}):
\[
\begin{CD}
C^0_\lambda(\mathscr{C}) @> b>> \dots @ > b>> C^n_\lambda(\mathscr{C}) @ > b>> C^{n+1}_\lambda(\mathscr{C})@> b>>\dots
\end{CD}
\]
where $C^n_\lambda(\mathscr{C})=Ker(1-\lambda)\subseteq C^n(\mathscr{C})$, $b=\sum_{i=0}^{n+1} (-1)^i \delta_i$ and 
$\lambda=(-1)^n \tau_n$. In particular, an element $\phi \in C^n_H(\mathcal{D}_H,M)$ is a cyclic cocycle if and only if
\begin{equation}\label{3.3y}
b(\phi)=0 \quad  \text{and} \quad (1-\lambda)(\phi)=0
\end{equation}

\smallskip
(2) The field $k$ is trivially a Hopf algebra with $\Delta(1)=1 \otimes 1$, $S(1)=1=\varepsilon(1)$, and also a SAYD module over itself with coaction given by $\Delta$. Substituting $H=k=M$ in the construction of $C^\bullet_H(\mathcal{D}_H,M)$, we get back the ordinary cyclic cohomology $HC^\bullet(\mathcal{D}_H)$ of the $k$-linear category $\mathcal{D}_H$ as discussed in Section \ref{nerve}.
\end{remark}
We now let ${^\sigma}{k}_\delta$ denote the SAYD module structure on $k$ (see, for instance \cite{hkrs2}) over $H$ defined by setting
$$\alpha \cdot h:=\delta(h)\alpha, \quad \rho(\alpha):=\sigma \otimes \alpha \qquad \forall \alpha \in k, h \in H$$ 

\begin{prop}\label{bijection}
Let $\mathcal{D}_H$ be a left $H$-category and let $(\delta,\sigma)$ be a modular pair in involution for $H$. Then, $Z_H^0(\mathcal{D}_H,{^\sigma}{k}_\delta)=HC_H^0(\mathcal{D}_H,{^\sigma}{k}_\delta)$ is in bijection with the $k$-space consisting of $\delta$-invariant $\sigma$-traces on $\mathcal{D}_H$. 
\end{prop}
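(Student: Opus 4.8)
The plan is to establish the bijection already at the level of $0$-cochains and then match the two conditions carving out $Z^0_H$ against the two conditions defining a $\delta$-invariant $\sigma$-trace. Since $M={^\sigma}{k}_\delta$ is one-dimensional over $k$ and $CN_0(\mathcal{D}_H)=\bigoplus_X Hom_{\mathcal{D}_H}(X,X)$, a $k$-linear functional $\phi:M\otimes CN_0(\mathcal{D}_H)\longrightarrow k$ is the same datum as a collection $T^H=\{T^H_X:Hom_{\mathcal{D}_H}(X,X)\longrightarrow k\}_X$ via $T^H_X(f):=\phi(1\otimes f)$. This assignment is visibly a bijection between all of $C^0(\mathcal{D}_H,{^\sigma}{k}_\delta)$ and all collections of functionals on the endomorphism spaces; the content of the proposition is that it restricts to a bijection from $Z^0_H=HC^0_H$ onto the $\delta$-invariant $\sigma$-traces. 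I would therefore verify that $H$-linearity corresponds to $\delta$-invariance, and that the cocycle equation corresponds to the $\sigma$-trace identity.

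First I would translate $H$-linearity. For $\phi\in C^0_H$, Lemma \ref{lem2.3f} in degree $0$ with $m=1$ gives $T^H_X(hf)=\phi(1\otimes hf)=\phi(1\cdot h\otimes f)=\delta(h)\,T^H_X(f)$, since $1\cdot h=\delta(h)1$ in ${^\sigma}{k}_\delta$; this is exactly the $\delta$-invariance \eqref{1.2d}. Conversely, as $\phi$ is determined $k$-linearly by its value at $m=1$, checking the defining relation \eqref{new3.2} reduces to $\delta(h_1)T^H_X(S(h_2)f)=\varepsilon(h)T^H_X(f)$, which follows from $\delta$-invariance and the identity $\delta(h_1)\delta(S(h_2))=\delta(h_1S(h_2))=\varepsilon(h)$ valid because $\delta$ is a character. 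Thus $\phi\in C^0_H$ if and only if $T^H$ is $\delta$-invariant.

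Next I would translate the cocycle condition. By the Remark following Proposition \ref{prop2.3}, an $H$-linear $\phi$ lies in $Z^0_H$ iff $b\phi=0$ and $(1-\lambda)\phi=0$; in degree $0$ the second equation is automatic, since the stability computation in the proof of Proposition \ref{prop2.3}(2) shows $\tau_0=\mathrm{id}$ on $C^0_H$, whence $\lambda=\tau_0=\mathrm{id}$. Writing $b=\delta_0-\delta_1$ and evaluating at $m=1$ (so $m_{(-1)}=\sigma$, $m_{(0)}=1$, and $S^{-1}(\sigma)=\sigma^{-1}$), the equation $b\phi=0$ becomes $T^H_{X_0}(f^0f^1)=T^H_{X_1}\big((\sigma^{-1}f^1)f^0\big)$ for all composable $f^0\in Hom_{\mathcal{D}_H}(X_1,X_0)$, $f^1\in Hom_{\mathcal{D}_H}(X_0,X_1)$.

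The main step, and the only place where the algebraic structure genuinely enters, is reconciling this with the $\sigma$-trace identity \eqref{tracecomm}, which for the same morphisms reads $T^H_{X_0}(f^0f^1)=T^H_{X_1}\big(f^1(\sigma f^0)\big)$. The key computation is that, since $\sigma$ is group-like, the compatibility (iii) of Definition \ref{defH-cat} gives $\sigma\big((\sigma^{-1}f^1)f^0\big)=\big(\sigma(\sigma^{-1}f^1)\big)(\sigma f^0)=f^1(\sigma f^0)$; applying $\delta$-invariance and $\delta(\sigma)=1$ then yields $T^H_{X_1}\big(f^1(\sigma f^0)\big)=\delta(\sigma)T^H_{X_1}\big((\sigma^{-1}f^1)f^0\big)=T^H_{X_1}\big((\sigma^{-1}f^1)f^0\big)$. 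Hence, given $\delta$-invariance, $b\phi=0$ is equivalent to the $\sigma$-trace condition, and combining the three steps shows that $\phi\mapsto T^H$ restricts to the asserted bijection from $Z^0_H(\mathcal{D}_H,{^\sigma}{k}_\delta)=HC^0_H(\mathcal{D}_H,{^\sigma}{k}_\delta)$ onto the $\delta$-invariant $\sigma$-traces. I expect this group-like manipulation, together with the correct use of $\delta(\sigma)=1$, to be the one subtle point; the rest is bookkeeping with the explicit structure maps of Proposition \ref{prop2.3}.
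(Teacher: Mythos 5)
Your proposal is correct and follows essentially the same route as the paper: the correspondence $T^H_X(f)=\phi(1\otimes f)$, the identification of $H$-linearity with $\delta$-invariance, and the group-like manipulation $\sigma\bigl((\sigma^{-1}f^1)f^0\bigr)=f^1(\sigma f^0)$ combined with $\delta(\sigma)=1$ to match $b\phi=0$ with the $\sigma$-trace identity are all exactly the steps in the paper's argument (the paper carries them out directly on $\phi$ rather than citing Lemma \ref{lem2.3f}, but the computation is the same). Your explicit observation that $(1-\lambda)\phi=0$ is automatic in degree $0$ because $\tau_0=\mathrm{id}$ is a small point the paper leaves implicit, but it does not change the approach.
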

\begin{proof}
By definition, $HC_H^0(\mathcal{D}_H,{^\sigma}{k}_\delta)=Ker(b_0)$, where $b_0=\delta_0-\delta_1:C^0_H(\mathcal{D}_H,{^\sigma}{k}_\delta) \longrightarrow C^1_H(\mathcal{D}_H,{^\sigma}{k}_\delta)$. Let $\phi \in Ker(b_0)$. We now define a collection $T^H:=\{T_X^H:Hom_{\mathcal{D}_H}(X,X) \longrightarrow k\}_{X \in Ob(\mathcal{D}_H)}$ of $k$-linear maps given by
\begin{equation*}
T_X^H(f)=\phi(1 \otimes f) \qquad \forall f \in Hom_{\mathcal{D}_H}(X,X)
\end{equation*}
Using the fact that $\sigma$ is a group-like element in $H$ and $\delta(\sigma)=1$, we have
\begin{equation*}
\begin{array}{ll}
T^H_X(gf)=\phi(1 \otimes gf)=(\delta_0(\phi))(1 \otimes g \otimes f)&=(\delta_1(\phi))(1 \otimes g \otimes f)=\phi\left(1 \otimes (S^{-1}(\sigma)f)g\right)= \phi\big(1 \otimes (S(\sigma)f)(\varepsilon(\sigma)g)\big)\\
&= \phi\big(\delta(\sigma) \otimes (S(\sigma)f)(S(\sigma)\sigma g)\big)= \phi\big(1 \cdot \sigma \otimes S(\sigma)(f(\sigma g))\big)\\
&= \big(\phi(1 \otimes f(\sigma g)\big)\varepsilon(\sigma)=\phi(1 \otimes f(\sigma g))=T_Y^H(f(\sigma g))
\end{array}
\end{equation*}
for any $f \in Hom_{\mathcal{D}_H}(X,Y)$ and $g \in Hom_{\mathcal{D}_H}(Y,X)$. This shows that the collection $T^H$ is a $\sigma$-trace on the category $\mathcal{D}_H$. We now verify that $T^H$ is $\delta$-invariant. For any $f' \in Hom_{\mathcal{D}_H}(X,X)$, we have
\begin{align*}
\delta(h)T_X^H(f') = \phi(1 \otimes \delta(h)f')= \phi\left(\delta(h_1) \otimes \varepsilon(h_2)f'\right)=\phi\left(1 \cdot h_1 \otimes S(h_2)h_3f'\right)=  \varepsilon(h_1)\phi\left(1\otimes h_2f'\right)=T_X^H(hf')
\end{align*}

Conversely, suppose that $T^H$ is a $\delta$-invariant $\sigma$-trace on $\mathcal{D}_H$. We consider the $k$-linear map $\phi \in C^0(\mathcal{D}_H,{^\sigma}{k}_\delta)$   determined by $\phi(1 \otimes f')=T_X^H(f')$ for each $f' \in Hom_{\mathcal{D}_H}(X,X)$. Let us first verify that $\phi$ is $H$-linear. For any $h \in H$, we have
\begin{align*}
\phi\left((1 \otimes f')h\right)&= \phi\left(1 \cdot h_1 \otimes S(h_2)f'\right)=\phi\left(1  \otimes \delta(h_1)S(h_2)f'\right)=T_X^H\left(S_\delta(h)f'\right)\\
&=\delta\left(S_\delta(h)\right)T_X^H(f')=\varepsilon(h)T_X^H(f')=\varepsilon(h)\phi(1 \otimes f')
\end{align*}
Next, we verify that $\phi \in Ker(b_0)$. For any $f \in Hom_{\mathcal{D}_H}(X,Y)$ and $g \in Hom_{\mathcal{D}_H}(Y,X)$, we have
\begin{equation*}
\begin{array}{ll}
\left(b_0(\phi)\right)(1 \otimes g \otimes f)&= \left(\delta_0(\phi)\right)(1 \otimes g \otimes f)- \left(\delta_1(\phi)\right)(1 \otimes g \otimes f)\\
&=\phi(1 \otimes gf) - \phi\left(1 \otimes \left(S^{-1}(\sigma)f\right)g\right)\\
&=\phi(1 \otimes gf) - \phi\left(1 \cdot \sigma \otimes \left(S(\sigma)f\right)\left(S(\sigma)\sigma g\right)\right)\\
&=\phi(1 \otimes gf) - \phi\left(1 \otimes f(\sigma g)\right)= T_X^H(gf)-T_Y^H\left(f(\sigma g)\right)=0
\end{array}
\end{equation*}
This shows that $\phi \in HC_H^0(\mathcal{D}_H,{^\sigma}{k}_\delta)$.
\end{proof}

\section{Characteristic map with SAYD coefficients}
Let $\mathcal{D}_H$ be a left $H$-category and let $(\delta,\sigma)$ be a modular pair in involution for $H$. We have shown that the $\delta$-invariant $\sigma$-traces on $\mathcal{D}_H$ are in bijection with $HC_H^0(\mathcal{D}_H,{^\sigma}{k}_\delta)$ (Proposition \ref{bijection}). Moreover, a $\delta$-invariant $\sigma$-trace on $\mathcal{D}_H$ induces a homomorphism $HC^\bullet_{(\delta,\sigma)}(H) \longrightarrow HC^\bullet(\mathcal{D}_H)$ (Theorem \ref{charmap}). Thus, we obtain a pairing
\begin{equation}\label{p1}
HC^\bullet_{(\delta,\sigma)}(H) \otimes HC_H^0(\mathcal{D}_H,{^\sigma}{k}_\delta) \longrightarrow HC^\bullet(\mathcal{D}_H)
\end{equation}

The pairing in \eqref{p1} leads us to ask if there exists a similar pairing when the Hopf algebra $H$ is replaced by an $H$-module coalgebra $C$ and the trivial SAYD module ${^\sigma}{k}_\delta$ is replaced by a general SAYD module $M$.  In this section, we obtain the following pairing
\begin{equation*}
HC^\bullet_H(C,M)  \otimes HC^0_H(\mathcal{D}_H,M) \longrightarrow HC^\bullet(\mathcal{D}_H)\\
\end{equation*} 
A coalgebra $(C,\Delta_C,\varepsilon_C)$ which is also a left $H$-module such that
$$\Delta_C(hc)= h_1c_1 \otimes h_2c_2, \quad \varepsilon_C(hc)=\varepsilon(h)\varepsilon_C(c) \qquad \forall h \in H, c\in C$$
is said to be a left $H$-module coalgebra. 

\smallskip We now recall the Hopf-cyclic cohomology of a left $H$-module coalgebra $C$ with coefficients in a right-left SAYD module $M$ (see \cite{hkrs}). Let $C^n_H(C,M):=M \otimes_H C^{\otimes n+1}$ for $n \geq 0$. Then, $C^\bullet_H(C,M)=\{C^n_H(C,M)\}_{n \geq 0}$ is a $\Lambda$-module with the following structure maps:
\begin{align*}
\delta'_i(m \otimes_H c^0 \otimes \ldots \otimes c^{n-1})&= 
\begin{cases}  m \otimes_H c^0 \otimes \ldots \otimes c^i_1 \otimes c^{i}_2 \otimes \ldots \otimes c^{n-1} \quad~~~~~ 0 \leq i \leq n-1\\
 m_{(0)} \otimes_H c^0_2 \otimes c^1 \otimes \ldots \otimes c^{n-1} \otimes m_{(-1)}c^0_1 \quad~~~~~i=n
\end{cases}\\
\vspace{.2cm}
\sigma'_i(m \otimes_H c^0 \otimes \ldots \otimes c^{n+1})&= m \otimes_H c^0 \otimes \ldots \otimes \varepsilon_C(c^{i+1}) \otimes  \ldots \otimes c^{n+1} \quad~~~~~~~ 0 \leq i \leq n\\
\vspace{.2cm}
\tau'_n(m \otimes_H c^0 \otimes \ldots \otimes c^n)&=m_{(0)} \otimes_H c^1 \otimes \ldots \otimes c^n \otimes m_{(-1)}c^0
\end{align*}
The cohomology of the cocyclic module $C^\bullet_H(C,M)$ is said to be the Hopf-cyclic cohomology of the $H$-module coalgebra $C$ with coefficients in the SAYD module $M$. The corresponding cohomology groups will be denoted by $HC_H^\bullet(C,M)$.

\smallskip
In the construction of the pairing \eqref{p1}, the Hopf algebra $H$ acts on the category $\mathcal{D}_H$ in the sense of Definition \ref{defH-cat}. We will now define the action of an $H$-module coalgebra $C$  on a left $H$-category $\mathcal{D}_H$.

\begin{definition}\label{Def3.1s}
Let $\mathcal{D}_H$ be a left $H$-category and $C$ be a left $H$-module coalgebra. We say that $C$ acts on $\mathcal{D}_H$ if we have $k$-linear maps 
$\{C \otimes Hom_{\mathcal{D}_H}(X,Y) \longrightarrow Hom_{\mathcal{D}_H}(X,Y)\}_{(X,Y)\in Ob(\mathcal D_H)^2}$  satisfying
\begin{align}\label{Caction}
c(gf)=(c_1g)(c_2f), \quad c(id_X)=\varepsilon_C(c)id_X, \quad h(cf)=(hc)f
\end{align}
for any $f \in Hom_{\mathcal{D}_H}(X,Y),~ g \in Hom_{\mathcal{D}_H}(Y,Z),~ c \in C$ and $h \in H$.
\end{definition}

\smallskip
We now show that there is a pairing between the Hopf-cyclic cohomology of an $H$-module coalgebra $C$ and $HC^0_H(\mathcal D_H,M)$. This pairing takes values in the usual cyclic cohomology of the $k$-linear category  $\mathcal D_H$ (as described in Section \ref{nerve}).

\begin{theorem}\label{p2}
Let $\mathcal{D}_H$ be a left $H$-category and let $C$ be a left $H$-module coalgebra such that $C$ acts on $\mathcal{D}_H$.  Let $M$ be a right-left SAYD module over $H$. Then, for each $\phi \in HC^0_H(\mathcal{D}_H,M)$, we obtain a morphism $\gamma_M^\bullet: C^\bullet_H(C,M) \longrightarrow CN^\bullet(\mathcal{D}_H)$ of $\Lambda$-modules defined by
\begin{equation}
\big(\gamma_M^n(m \otimes _H c^0 \otimes \ldots \otimes c^n)\big)(f^0 \otimes \ldots \otimes f^{n}):=\phi\big(m \otimes (c^0f^0)\ldots (c^nf^n)\big) \label{gencharmap}
\end{equation}
for any $m \in M$, $c^0 \otimes \ldots \otimes c^n \in C^{\otimes n+1}$ and $f^0 \otimes \ldots \otimes f^{n} \in Hom_{\mathcal{D}_H}(X_1,X_0) \otimes Hom_{\mathcal{D}_H}(X_2,X_1) \otimes \ldots \otimes Hom_{\mathcal{D}_H}(X_0,X_n)$. Thus, we get the following pairing
\begin{equation*}
HC^\bullet_H(C,M) \otimes HC^0_H(\mathcal{D}_H,M)  \longrightarrow HC^\bullet(\mathcal{D}_H)
\end{equation*}
\end{theorem}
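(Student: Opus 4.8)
The plan is to treat this as the module-coalgebra, general-SAYD analogue of Theorem \ref{charmap}: I must check that each $\gamma_M^n$ is well defined on the balanced tensor product $M\otimes_H C^{\otimes n+1}$ and that the family $\gamma_M^\bullet$ intertwines the three kinds of structure maps, namely $\gamma_M^n\delta_i'=\delta_i\gamma_M^{n-1}$, $\gamma_M^n\sigma_i'=\sigma_i\gamma_M^{n+1}$ and $\gamma_M^n\tau_n'=\tau_n\gamma_M^n$, where the primed operators are those of $C^\bullet_H(C,M)$ and the unprimed ones act on $CN^\bullet(\mathcal D_H)$ by precomposition with $d_i,s_i,t_n$. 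The single analytic input I would isolate at the outset is the ``twisted trace'' identity carried by a $0$-cocycle: writing out $b_0(\phi)=(\delta_0-\delta_1)(\phi)=0$ by means of the explicit face maps of Proposition \ref{prop2.3} gives, for all composable $a,b$ with $ab$ an endomorphism, $\phi(m\otimes ab)=\phi\big(m_{(0)}\otimes (S^{-1}(m_{(-1)})b)a\big)$. Together with Lemma \ref{lem2.3f} (which lets me trade the $H$-action on $M$ for the diagonal action on morphisms) and the action axioms \eqref{Caction}, this is all the argument will use.

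First I would dispose of well-definedness and the cosimplicial identities. Well-definedness over $\otimes_H$ amounts to $\gamma_M^n(mh\otimes_H c^0\otimes\cdots\otimes c^n)=\gamma_M^n(m\otimes_H h(c^0\otimes\cdots\otimes c^n))$; this follows by pulling $h$ across with Lemma \ref{lem2.3f} and distributing it over the composite via the iterated $H$-linearity $h(g^1\cdots g^k)=(h_1g^1)\cdots(h_kg^k)$ and the compatibility $h(cf)=(hc)f$ from \eqref{Caction}. For the inner faces $\delta_i'$ with $0\le i\le n-1$, the coproduct split $c^i\mapsto c^i_1\otimes c^i_2$ is matched on the nerve side by $d_i$ through $c^i(f^if^{i+1})=(c^i_1f^i)(c^i_2f^{i+1})$, again by \eqref{Caction}; the degeneracies $\sigma_i'$ match $s_i$ because the counit factor $\varepsilon_C(c^{i+1})$ corresponds exactly to the inserted identity via $c(\mathrm{id}_X)=\varepsilon_C(c)\mathrm{id}_X$. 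These verifications are direct and parallel to those in Theorem \ref{charmap}.

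The crux, and the step I expect to be the main obstacle, is the extremal face $\delta_n'$ and the cyclic operator $\tau_n'$, where the SAYD coaction genuinely enters through the terms $m_{(-1)}c^0$. Spelling both sides out and using $h(cf)=(hc)f$, each of these identities reduces to the single statement $\phi(m\otimes ab)=\phi\big(m_{(0)}\otimes b\,(m_{(-1)}a)\big)$ for composable $a,b$. I would prove this by applying the twisted trace identity to the right-hand side (with coefficient $m_{(0)}$), then collapsing the resulting expression $S^{-1}(m_{(0)(-1)})(m_{(-1)}a)$ using coassociativity of the coaction in the form $m_{(-1)}\otimes m_{(0)(-1)}\otimes m_{(0)(0)}=m_{(-1)1}\otimes m_{(-1)2}\otimes m_{(0)}$, the antipode relation $S^{-1}(h_2)h_1=\varepsilon(h)1$, and finally the counit of the coaction $\varepsilon(m_{(-1)})m_{(0)}=m$. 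This chain is the precise analogue of the steps marked ``by \eqref{invariance}'' and ``by \eqref{tracecomm}'' in the proof of Theorem \ref{charmap}, and threading the coaction correctly through the cyclic rotation is the delicate point; note that stability of $M$ is not needed here, having already been used in Proposition \ref{prop2.3} to secure $\tau_n^{n+1}=\mathrm{id}$.

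Once the three families of identities hold, $\gamma_M^\bullet$ is a morphism of $\Lambda$-modules and therefore descends to a homomorphism $HC^\bullet_H(C,M)\to HC^\bullet(\mathcal D_H)$. Since the construction \eqref{gencharmap} is linear in $\phi$, feeding in a class $\phi\in HC^0_H(\mathcal D_H,M)$ yields exactly the asserted pairing $HC^\bullet_H(C,M)\otimes HC^0_H(\mathcal D_H,M)\to HC^\bullet(\mathcal D_H)$.
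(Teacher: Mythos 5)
Your proposal is correct and follows essentially the same route as the paper: both extract the identity $\phi(m\otimes gf)=\phi\big(m_{(0)}\otimes(S^{-1}(m_{(-1)})f)g\big)$ from $b_0(\phi)=0$ and then verify the cosimplicial and cyclic compatibilities using the action axioms \eqref{Caction}, with the coaction coassociativity and $S^{-1}(h_2)h_1=\varepsilon(h)1$ collapsing the twisted terms in the $\delta_n'$ and $\tau_n'$ cases exactly as in the paper's inline computations. Your only deviations are cosmetic: you package those two cases through the single equivalent identity $\phi(m\otimes ab)=\phi\big(m_{(0)}\otimes b(m_{(-1)}a)\big)$ and you explicitly check well-definedness over $\otimes_H$ (via Lemma \ref{lem2.3f}), a point the paper leaves implicit.
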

\begin{proof}
Let $\phi \in HC^0_H(\mathcal{D}_H,M)$. Then, by definition, we have
\begin{align}\label{3.3g}
\phi(m \otimes gf)=(\delta_0(\phi))(m \otimes g \otimes f)&=(\delta_1(\phi))(m \otimes g \otimes f)=\phi\left(m_{(0)} \otimes \left(S^{-1}(m_{(-1)})f\right)g\right)
\end{align}
for any $m \in M,$ $f \in Hom_{\mathcal{D}_H}(X,Y)$ and $g \in Hom_{\mathcal{D}_H}(Y,X)$.
In order to show that \eqref{gencharmap} defines a map of $\Lambda$-modules, we need to prove that the following identities hold:
\begin{align}
\gamma_M^{n}\delta'_i&=\delta_i\gamma_M^{n-1} \quad 0 \leq i \leq n \label{3.1}\\ 
\gamma_M^n \sigma'_i &=\sigma_i\gamma_M^{n+1} \quad 0 \leq i \leq n \label{3.2}\\ 
\gamma_M^n\tau'_n &= \tau_n\gamma_M^n \label{3.3}
\end{align}
For $0 \leq i \leq n-1$, we have
\begin{align*}
&\big((\gamma_M^{n}\delta'_i)(m \otimes_H c^0 \otimes \ldots \otimes c^{n-1})\big)(f^0 \otimes \ldots \otimes f^n)&\\
& \quad =\big(\gamma_M^n(m \otimes_H c^0 \otimes \ldots \otimes c^i_1 \otimes c^{i}_2 \otimes \ldots \otimes c^{n-1})\big)(f^0 \otimes \ldots \otimes f^n)&\\
& \quad =\phi\big(m \otimes (c^0f^0)\ldots (c^i_1f^i)(c^i_2f^{i+1}) \ldots (c^{n-1}f^n)\big)&\\
& \quad =\phi\big(m \otimes (c^0f^0)\ldots \left(c^i(f^if^{i+1})\right) \ldots (c^{n-1}f^n)\big)& (\text{by } \eqref{Caction})\\
& \quad =\big(\gamma_M^{n-1}(m \otimes_H c^0 \otimes \ldots \otimes c^{n-1})\big)(f^0 \otimes \ldots \otimes f^if^{i+1} \otimes \ldots \otimes f^n)&\\
& \quad =\big((\delta_i\gamma_M^{n-1})(m \otimes_H c^0 \otimes \ldots \otimes c^{n-1})\big)(f^0 \otimes \ldots \otimes f^n)&
\end{align*}
Moreover,
\begin{align*}
&\big((\gamma_M^{n}\delta'_n)(m \otimes_H c^0 \otimes \ldots \otimes c^{n-1})\big)(f^0 \otimes \ldots \otimes f^n)\\
&\quad =\big(\gamma_M^n(m_{(0)} \otimes_H c^0_2 \otimes c^1 \otimes \ldots \otimes c^{n-1} \otimes m_{(-1)}c^0_1)\big)(f^0 \otimes \ldots \otimes f^n)\\
&\quad= \phi\Big(m_{(0)} \otimes (c^0_2f^0)(c^1f^1) \ldots (c^{n-1}f^{n-1})\big((m_{(-1)}c^0_1)f^n\big)\Big)\\
&\quad= \phi\Big(m_{(0)(0)} \otimes [S^{-1}(m_{(0)(-1)})\big((m_{(-1)}c^0_1)f^n\big)] (c^0_2f^0)(c^1f^1) \ldots (c^{n-1}f^{n-1})\Big)\quad (\text{by } \eqref{3.3g})\\
&\quad = \phi\big(m \otimes (c^0_1f^n)(c^0_2f^0)(c^1f^1) \ldots (c^{n-1}f^{n-1})\big)\\
&\quad = \phi\big(m \otimes \left(c^0(f^nf^0)\right)(c^1f^1) \ldots (c^{n-1}f^{n-1})\big)\\
&\quad =\big(\gamma_M^{n-1}(m \otimes_H c^0 \otimes \ldots \otimes c^{n-1})\big)(f^nf^0 \otimes f^1 \otimes \ldots \otimes f^{n-1})\\
&\quad =\big((\delta_n\gamma_M^{n-1})(m \otimes_H c^0 \otimes \ldots \otimes c^{n-1})\big)(f^0 \otimes \ldots \otimes f^n)
\end{align*}
This proves \eqref{3.1}. Next, we verify the identity \eqref{3.2}. For $0 \leq i \leq n-1$, we have
\begin{align*}
&\big((\sigma_i\gamma_M^{n+1})(m \otimes_H c^0 \otimes \ldots \otimes c^{n+1})\big)(f^0 \otimes \ldots \otimes f^n)\\
&\quad =\big(\gamma_M^{n+1}(m \otimes_H c^0 \otimes \ldots \otimes c^{n+1})\big)(f^0 \otimes \ldots \otimes f^i \otimes id_{X_{i+1}} \otimes f^{i+1} \otimes \ldots \otimes f^n)\\
& \quad =\phi\big(m \otimes (c^0f^0)\ldots (c^if^i)(c^{i+1}id_{X_{i+1}}) \ldots (c^{n+1}f^n)\big)\\
& \quad =\phi\big(m \otimes (c^0f^0)\ldots (c^if^i)\left(\varepsilon_C(c^{i+1})id_{X_{i+1}}\right)(c^{i+2}f^{i+1}) \ldots (c^{n+1}f^n)\big)\\
&\quad =\big(\gamma_M^{n}(m \otimes_H c^0 \otimes \ldots \otimes \varepsilon_C(c^{i+1}) \otimes \ldots \otimes c^{n+1})\big)(f^0  \otimes  \ldots \otimes f^i \otimes  f^{i+1} \otimes \ldots \otimes f^n)\\
&\quad= \big((\gamma_M^{n}\sigma'_i)(m \otimes_H c^0 \otimes \ldots \otimes c^{n+1})\big)(f^0 \otimes \ldots \otimes f^n)
\end{align*}
It may be verified similarly that  $\sigma_n\gamma_M^{n+1}=\gamma_M^n \sigma_n'$. It remains to verify \eqref{3.3}. We have
\begin{align*}
&\big((\gamma_M^{n}\tau'_n)(m \otimes_H c^0 \otimes \ldots \otimes c^{n})\big)(f^0 \otimes \ldots \otimes f^n)\\
&\quad = \big(\gamma_M^n(m_{(0)} \otimes_H c^1 \otimes \ldots \otimes c^{n} \otimes m_{(-1)}c^0)\big)(f^0 \otimes \ldots \otimes f^n)\\
& \quad = \phi\big(m_{(0)} \otimes (c^1f^0)\ldots (c^{n}f^{n-1})\left((m_{(-1)}c^0)f^n\right)\big)\\
& \quad = \phi\big(m_{(0)(0)} \otimes [S^{-1}(m_{(0)(-1)})\left((m_{(-1)}c^0)f^n\right)] (c^1f^0) \ldots (c^{n}f^{n-1})\big)\\
& \quad = \phi\left(m \otimes (c^0f^n) (c^1f^0) \ldots (c^{n}f^{n-1})\right)\\
&\quad= \big(\gamma_M^{n}(m \otimes_H c^0 \otimes \ldots \otimes c^{n})\big)(f^n \otimes f^0 \otimes \ldots \otimes f^{n-1})\\
&\quad =\big((\tau_n\gamma_M^{n})(m \otimes_H c^0 \otimes \ldots \otimes c^{n})\big)(f^0 \otimes \ldots \otimes f^n)
\end{align*}
This completes the proof.
\end{proof}

We will now extend the result in  Theorem \ref{p2} to a general pairing
\begin{equation}\label{th4-1}
HC^q_H(C,M) \otimes HC^p_H(\mathcal{D}_H,M)  \longrightarrow HC^{q+p}(\mathcal{D}_H)
\end{equation}

Let $\mathcal{D}_H$ be a left $H$-category and $C$ be a left $H$-module coalgebra. Let $C^{n}_{(C,M,\mathcal{D}_H)}$ be the diagonal complex
\begin{equation*}
C^{n}_{(C,M,\mathcal{D}_H)}:=C^n_H(C,M) \otimes_k C^n_H(\mathcal{D}_H,M)=(M \otimes_H C^{n+1} ) \otimes_k Hom_H(M \otimes CN_n(\mathcal{D}_H),k) \qquad \forall \textrm{ }n\geq 0
\end{equation*}
which is a cocyclic module with structure maps $\{\delta_i' \otimes \delta_i, \sigma_i' \otimes \sigma_i, \tau_n' \otimes \tau_n\}_{0\leq i \leq n}$ (see \cite[$\S$ 2.5.1.2]{Loday}). 

\smallskip
We consider the $k$-linear category $(C,\mathcal{D}_H)$ defined as follows:
\begin{equation*}
\begin{array}{c}
Ob(C,\mathcal{D}_H)=Ob(\mathcal{D}_H)\\
Hom_{(C,\mathcal{D}_H)}(X,Y)=Hom_H\left(C,Hom_{{\mathcal{D}}_H}(X,Y)\right)
\end{array}
\end{equation*}
The composition in $(C,\mathcal{D}_H)$ is given by $(f \ast g)(c)=f(c_1) \circ g(c_2)$ for any $g\in Hom_{(C,\mathcal{D}_H)}(X,Y)$, $f\in Hom_{(C,\mathcal{D}_H)}(Y,Z)$ and $c \in C$.

\begin{prop}\label{Teorem4.3}
Let $\mathcal{D}_H$ be a left $H$-category and $C$ be a left $H$-module coalgebra. Then, the map
\begin{equation*}
\Psi:C^{n}_{(C,M,\mathcal{D}_H)}=(M \otimes_H C^{n+1} ) \otimes Hom_H(M \otimes CN_n(\mathcal{D}_H),k) \longrightarrow C^n(C,\mathcal{D}_H)=Hom_k(CN_n(C,\mathcal{D}_H),k)
\end{equation*}
given by
\begin{equation*}
\left(\Psi(m \otimes_H c^0 \otimes \ldots \otimes c^{n} \otimes \phi)\right)(g^0 \otimes \ldots \otimes g^{n}):=\phi(m \otimes g^0(c^0) \otimes \ldots \otimes g^n(c^{n}) )
\end{equation*}
determines a morphism of cocyclic modules.
\end{prop}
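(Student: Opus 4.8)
The plan is to verify that $\Psi$ intertwines each of the three families of cocyclic operators on the diagonal module $C^{\bullet}_{(C,M,\mathcal{D}_H)}$ with the corresponding operators on the target $CN^{\bullet}(C,\mathcal{D}_H)$. On the source the operators are $\delta_i'\otimes\delta_i$, $\sigma_i'\otimes\sigma_i$ and $\tau_n'\otimes\tau_n$, while on the target they are precomposition with the cyclic-nerve face, degeneracy and cyclic maps $d_i$, $s_i$, $t_n$ of the $k$-linear category $(C,\mathcal{D}_H)$, whose composition is $(f\ast g)(c)=f(c_1)\circ g(c_2)$ and whose identity morphism at an object $X$ is $\mathbf 1_X\colon c\mapsto \varepsilon_C(c)\,id_X$ (immediate from Definition \ref{Def3.1s}). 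As a preliminary step I would check that $\Psi$ is well defined on the balanced tensor product $M\otimes_H C^{\otimes n+1}$: evaluating $\Psi(mh\otimes_H c^0\otimes\cdots\otimes c^n\otimes\phi)$ and $\Psi(m\otimes_H h_1c^0\otimes\cdots\otimes h_{n+1}c^n\otimes\phi)$ on $g^0\otimes\cdots\otimes g^n$ and using Lemma \ref{lem2.3f} to move $h$ onto the diagonal $H$-action, the two sides agree because each $g^j$ is $H$-linear, so $g^j(h_{j+1}c^j)=h_{j+1}\,g^j(c^j)$.

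The inner faces $\delta_i'\otimes\delta_i$ with $0\le i\le n-1$ and all the degeneracies $\sigma_i'\otimes\sigma_i$ are the routine cases, involving no coaction. For an inner face, $\delta_i'$ splits $c^i$ into $c^i_1\otimes c^i_2$, so that after evaluation the $i$-th and $(i+1)$-th entries fed to $\phi$ are $g^i(c^i_1)$ and $g^{i+1}(c^i_2)$; applying $\delta_i$ on the $\mathcal{D}_H$-side then composes them to $g^i(c^i_1)\circ g^{i+1}(c^i_2)=(g^i\ast g^{i+1})(c^i)$, which is exactly the entry produced by $\delta_i\Psi=\Psi\circ d_i$ since $d_i$ forms $g^i\ast g^{i+1}$. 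For the degeneracies one uses $c(id_X)=\varepsilon_C(c)\,id_X$: the identity inserted by $\sigma_i$ on the $\mathcal{D}_H$-side matches the morphism $\mathbf 1_X$ inserted by $s_i$ in $(C,\mathcal{D}_H)$, because $\mathbf 1_X(c^{i+1})=\varepsilon_C(c^{i+1})\,id_X$ supplies precisely the scalar $\varepsilon_C(c^{i+1})$ that $\sigma_i'$ contributes.

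The substantive cases are the last face $\delta_n'\otimes\delta_n$ and the cyclic operator $\tau_n'\otimes\tau_n$, the two in which both the coalgebra-side and the category-side operators act through the coaction of $M$. After unwinding $\Psi$ one meets the iterated coaction $(m_{(0)})_{(0)}\otimes(m_{(0)})_{(-1)}\otimes m_{(-1)}$ together with a wrapped-around term $g^n(m_{(-1)}c^0)$ (respectively $g^n(m_{(-1)}c^0_1)$). I would reduce this in four moves: first, $H$-linearity of $g^n$ rewrites $g^n(m_{(-1)}c^0)=m_{(-1)}\,g^n(c^0)$; second, coassociativity of the coaction replaces $(m_{(0)})_{(0)}\otimes(m_{(0)})_{(-1)}\otimes m_{(-1)}$ by $m_{(0)}\otimes m_{(-1)2}\otimes m_{(-1)1}$; third, the antipode identity for $H^{\mathrm{cop}}$ collapses $S^{-1}(m_{(-1)2})\,m_{(-1)1}=\varepsilon(m_{(-1)})1$; and fourth, the counit of the coaction gives $\varepsilon(m_{(-1)})\,m_{(0)}=m$. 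For $\tau_n$ this leaves $\phi\big(m\otimes g^n(c^0)\otimes g^0(c^1)\otimes\cdots\otimes g^{n-1}(c^n)\big)$, which equals $\tau_n\Psi=\Psi\circ t_n$ because $t_n$ cyclically brings $g^n$ to the front; for $\delta_n$ it leaves the entry $g^n(c^0_1)\circ g^0(c^0_2)=(g^n\ast g^0)(c^0)$, matching $\delta_n\Psi=\Psi\circ d_n$ since $d_n$ forms $g^n\ast g^0$.

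The main obstacle is precisely the bookkeeping in these last two identities: one must check that the two independently introduced coactions --- one from the coalgebra operator, one from the category operator --- combine correctly under coassociativity so that the twisting factor telescopes via $S^{-1}(m_{(-1)2})\,m_{(-1)1}=\varepsilon(m_{(-1)})$ back to the untwisted expression, leaving no residual $H$-action. This is the categorical analogue of the compatibility underlying the algebra-level cup product of Khalkhali and Rangipour, and once the Sweedler indices are tracked carefully there is no further difficulty; notably the reduction uses only the comodule axioms, the antipode identity and the $H$-linearity of the hom-morphisms of $(C,\mathcal{D}_H)$, and does not require the stability of $M$. Having established all three families of relations together with well-definedness over $\otimes_H$, $\Psi$ is a morphism of cocyclic modules.
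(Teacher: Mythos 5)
Your proposal is correct and follows essentially the same route as the paper's proof: well-definedness over $\otimes_H$ via Lemma \ref{lem2.3f} and the $H$-linearity of the $g^j$, the inner faces and degeneracies via the convolution composition and the unit $c\mapsto\varepsilon_C(c)\,id_X$ of $(C,\mathcal{D}_H)$, and the last face and cyclic operator via $H$-linearity of $g^n$, coassociativity of the coaction and the telescoping identity $S^{-1}(m_{(-1)2})m_{(-1)1}=\varepsilon(m_{(-1)})$. If anything, you are slightly more explicit than the paper, which dismisses the $i=n$ face as ``similar''; the only nitpick is that the unit of $(C,\mathcal{D}_H)$ comes from the convolution composition and the counit axiom rather than from Definition \ref{Def3.1s}, since Proposition \ref{Teorem4.3} does not assume $C$ acts on $\mathcal{D}_H$.
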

\begin{proof}
We first verify that $\Psi$ is well-defined. We have
\begin{equation*}
\begin{array}{lll}
&\left(\Psi(mh \otimes_H c^0 \otimes \ldots \otimes c^{n} \otimes \phi)\right)(g^0 \otimes \ldots \otimes g^{n})\\
& \quad =\phi(mh \otimes g^0(c^0) \otimes \ldots \otimes g^n(c^{n}) )\\
& \quad =\phi(m \otimes h_1g^0(c^0) \otimes \ldots \otimes h_{n+1}g^n(c^{n}) )& (\text{by Lemma } \ref{lem2.3f}) \\
& \quad =\phi(m \otimes g^0(h_1c^0) \otimes \ldots \otimes g^n(h_{n+1}c^{n}) )\\
& \quad= \left(\Psi(m \otimes_H h_1c^0 \otimes \ldots \otimes h_{n+1}c^{n} \otimes \phi)\right)(g^0 \otimes \ldots \otimes g^{n})\\
& \quad= \left(\Psi(m \otimes_H h(c^0 \otimes \ldots \otimes c^{n}) \otimes \phi)\right)(g^0 \otimes \ldots \otimes g^{n})\\
\end{array}
\end{equation*}
For $0 \leq i \leq n-1$, we have
\begin{equation*}
\begin{array}{lll}
&\left((\Psi \circ (\delta_i' \otimes \delta_i))(m \otimes_H c^0 \otimes \ldots \otimes c^{n} \otimes \phi)\right)(g^0 \otimes \ldots \otimes g^{n+1})\\
& \quad =\left(\Psi(m \otimes_H c^0 \otimes \ldots \otimes c^i_1 \otimes c^i_2 \otimes \ldots \otimes c^{n} \otimes \delta_i(\phi))\right)(g^0 \otimes \ldots \otimes g^{n+1})\\
&\quad= \delta_i(\phi)(m \otimes g^0(c^0) \otimes \ldots \otimes g^i(c^i_1) \otimes g^{i+1}(c^i_2) \otimes \ldots \otimes g^{n+1}(c^n))\\
& \quad =\phi(m \otimes g^0(c^0) \otimes \ldots \otimes g^i(c^i_1) \circ g^{i+1}(c^i_2) \otimes \ldots \otimes g^{n+1}(c^n))\\
& \quad =\phi(m \otimes g^0(c^0) \otimes \ldots \otimes (g^i \ast g^{i+1})(c^i) \otimes \ldots \otimes g^{n+1}(c^n))\\
&\quad=\left((\delta_i \circ \Psi)(m \otimes_H c^0 \otimes \ldots \otimes c^{n} \otimes \phi)\right)(g^0 \otimes \ldots \otimes g^{n+1})\\
\end{array}
\end{equation*}
The case $i=n$ can be verified similarly.
Further, for $0 \leq i \leq n-1$, we have
\begin{equation*}
\begin{array}{ll}
&\left((\Psi \circ (\sigma_i' \otimes \sigma_i))(m \otimes_H c^0 \otimes \ldots \otimes c^{n} \otimes \phi)\right)(g^0 \otimes \ldots \otimes g^{n-1})\\
&\quad=\Psi\left(m \otimes_H c^0 \otimes \ldots \otimes  \varepsilon(c^i) \otimes \ldots \otimes c^{n} \otimes \sigma_i(\phi)\right)\\
&\quad= \sigma_i(\phi)(m \otimes g^0(c^0) \otimes \ldots \otimes g^i(c^{i+1}) \otimes \ldots \otimes g^{n-1}(c^n)) \varepsilon(c^i)\\
&\quad=\phi\left(g^0(c^0) \otimes \ldots \otimes \varepsilon(c^i)id \otimes g^i(c^{i+1}) \otimes \ldots \otimes g^{n-1}(c^n)\right)\\
&\quad=\left((\sigma_i \circ \Psi)(m \otimes_H c^0 \otimes \ldots \otimes c^{n} \otimes \phi)\right)(g^0 \otimes \ldots \otimes g^{n-1})\\
\end{array}
\end{equation*}
The case $i=n$ can be verified similarly. We also have
\begin{equation*}
\begin{array}{ll}
&\left((\Psi \circ (\tau'_n \otimes \tau_n))(m \otimes_H c^0 \otimes \ldots \otimes c^{n} \otimes \phi)\right)(g^0 \otimes \ldots \otimes g^{n})\\
&\quad=\Psi(m_{(0)} \otimes  c^1 \otimes \ldots \otimes c^{n} \otimes m_{(-1)}c^0 \otimes \tau_n(\phi))(g^0 \otimes \ldots \otimes g^{n})\\
&\quad =\tau_n(\phi)(m_{(0)} \otimes_H  g^0(c^1) \otimes \ldots \otimes g^{n-1}(c^{n}) \otimes g^{n}(m_{(-1)}c^0))\\
&\quad=\phi\left(m_0 \otimes S^{-1}(m_{(-2)})g^{n}(m_{(-1)}c^0)\otimes  g^0(c^1) \otimes \ldots \otimes g^{n-1}(c^{n})\right)\\
&\quad=\phi\left(m \otimes g^{n}(c^0) \otimes  g^0(c^1) \otimes \ldots \otimes g^{n-1}(c^{n})\right)\\
&\quad=\left((\tau_n \circ \Psi)(m \otimes_H c^0 \otimes \ldots \otimes c^{n} \otimes \phi)\right)(g^0 \otimes \ldots \otimes g^{n})\\
\end{array}
\end{equation*}
This completes the proof.
\end{proof}

\begin{theorem}\label{thm8.15}
Let $M$ be a right-left SAYD module over $H$.  Let $C$ be a left $H$-module coalgebra and let $\mathcal{D}_H$ be a left $H$-category. Then, we have  a pairing
\begin{equation}\label{rpi1}
HC^q_H(C,M) \otimes HC^p_H(\mathcal{D}_H,M) \longrightarrow HC^{p+q}(C,\mathcal{D}_H)
\end{equation}
Additionally, suppose that   $C$ acts on $\mathcal{D}_H$ in the sense of Definition \ref{Def3.1s}. Then, we obtain a pairing:
\begin{equation}\label{rpi2}
HC^q_H(C,M) \otimes HC^p_H(\mathcal{D}_H,M) \longrightarrow HC^{p+q}(\mathcal{D}_H)
\end{equation}
\end{theorem}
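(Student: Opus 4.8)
The plan is to factor both pairings through the diagonal cocyclic module $C^\bullet_{(C,M,\mathcal{D}_H)}$ together with the morphism $\Psi$ of Proposition \ref{Teorem4.3}. For \eqref{rpi1} I would first invoke the external (cup) product in the cyclic cohomology of cocyclic modules. Applied to $X^\bullet := C^\bullet_H(C,M)$ and $Y^\bullet := C^\bullet_H(\mathcal{D}_H,M)$, this yields a pairing
\[
HC^q(X^\bullet) \otimes HC^p(Y^\bullet) \longrightarrow HC^{p+q}\big(\mathrm{Diag}(X^\bullet \otimes Y^\bullet)\big) = HC^{p+q}\big(C^\bullet_{(C,M,\mathcal{D}_H)}\big),
\]
where $\mathrm{Diag}(X^\bullet\otimes Y^\bullet)^n = X^n \otimes_k Y^n$ carries the diagonal structure maps $\{\delta_i'\otimes\delta_i,\ \sigma_i'\otimes\sigma_i,\ \tau_n'\otimes\tau_n\}$ recorded just before Proposition \ref{Teorem4.3}. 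Since $\Psi$ is a morphism of cocyclic modules, it induces $HC^{p+q}(C^\bullet_{(C,M,\mathcal{D}_H)}) \to HC^{p+q}(C,\mathcal{D}_H)$; composing the two maps gives \eqref{rpi1}.

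The external product is the technical heart, and here I would follow the methods of Rangipour \cite{Rangipu}. Concretely, one passes to the $(b,B)$-bicomplexes (equivalently, the mixed complexes) of $X^\bullet$ and $Y^\bullet$ and builds a chain map from the total complex of their tensor product into the mixed complex of the diagonal. In characteristic zero this is the cyclic Eilenberg--Zilber theorem: an Alexander--Whitney term handles the Hochschild differential $b$, while an additional cyclic shuffle term is needed to intertwine the Connes operators $B$; dualizing then produces the product on cohomology. The hard part will be verifying that these shuffle maps commute with $b$ and $B$ \emph{simultaneously} --- equivalently, that they descend to Connes' $\lambda$-complexes $C^\bullet_\lambda$ of \eqref{3.3y} --- since the cyclic shuffle correction is exactly what forces the product to respect the full cyclic structure rather than merely the Hochschild one. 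This is the main obstacle.

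For \eqref{rpi2} it remains to produce a map $HC^{p+q}(C,\mathcal{D}_H) \to HC^{p+q}(\mathcal{D}_H)$, and since cyclic cohomology of $k$-linear categories is contravariant in functors, I would construct a $k$-linear functor $F:\mathcal{D}_H \to (C,\mathcal{D}_H)$ that is the identity on objects and sends $f \in Hom_{\mathcal{D}_H}(X,Y)$ to $\bar f \in Hom_H\big(C,Hom_{\mathcal{D}_H}(X,Y)\big) = Hom_{(C,\mathcal{D}_H)}(X,Y)$ defined by $\bar f(c)=cf$, using the action of $C$. The relations in \eqref{Caction} are precisely what is needed: $h(cf)=(hc)f$ makes $\bar f$ left $H$-linear, $c(gf)=(c_1g)(c_2f)$ gives $\overline{gf}=\bar g \ast \bar f$, and $c(id_X)=\varepsilon_C(c)id_X$ shows that $F$ preserves identities (the identity of $(C,\mathcal{D}_H)$ at $X$ being $c\mapsto \varepsilon_C(c)id_X$). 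Thus $F$ induces a morphism of cyclic nerves $CN_\bullet(\mathcal{D}_H)\to CN_\bullet(C,\mathcal{D}_H)$, hence $F^\ast:HC^{p+q}(C,\mathcal{D}_H)\to HC^{p+q}(\mathcal{D}_H)$; composing with \eqref{rpi1} yields \eqref{rpi2}. As a consistency check, specializing to $p=0$ should recover the characteristic map $\gamma_M^\bullet$ of Theorem \ref{p2}.
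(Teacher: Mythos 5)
Your proposal is correct and follows essentially the same route as the paper: the pairing \eqref{rpi1} is obtained by composing the K\"unneth map with the (cyclic) Eilenberg--Zilber isomorphism onto $HC^{p+q}(C^\bullet_{(C,M,\mathcal D_H)})$ (which the paper simply cites from Loday \cite[$\S$ 4.3.8]{Loday}, so the ``main obstacle'' you flag is a standard quoted result rather than something to be reproved), followed by the map induced by $\Psi$; and \eqref{rpi2} is obtained from exactly the inclusion $f\mapsto (c\mapsto cf)$ of $\mathcal D_H$ into $(C,\mathcal D_H)$ that the paper uses.
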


\begin{proof} We let $\mathcal{B}(C^\bullet_H(C,M))$ and $\mathcal{B}(C^\bullet_H(\mathcal{D}_H,M))$ denote respectively the mixed complexes corresponding to the cocyclic modules $C^\bullet_H(C,M)$ and $C^\bullet_H(\mathcal{D}_H,M)$. By definition, $HC^q_H(C,M) =H^q(Tot(\mathcal{B}(C^\bullet_H(C,M))))$ and $HC^p_H(\mathcal{D}_H,M)=H^p(Tot(\mathcal{B}(C^\bullet_H(\mathcal{D}_H,M))))$. We have a canonical morphism
\begin{equation}\label{pomc1}
\begin{CD}
HC^q_H(C,M)\otimes HC^p_H(\mathcal{D}_H,M)=H^q(Tot(\mathcal{B}(C^\bullet_H(C,M))))\otimes H^p(Tot(\mathcal{B}(C^\bullet_H(\mathcal{D}_H,M))))\\
@VVV \\
H^{p+q}( Tot(\mathcal{B}(C^\bullet_H(C,M)))\otimes Tot(\mathcal{B}(C^\bullet_H(\mathcal{D}_H,M))))
\\ @V\cong VV \\
HC^{p+q}(C^\bullet_{(C,M,\mathcal D_H)})\\
\end{CD}
\end{equation}
where the vertical isomorphism follows from Eilenberg-Zilber Theorem \cite[$\S$ 4.3.8]{Loday}. The morphism of cocyclic modules in Proposition \ref{Teorem4.3} induces a morphism
$HC^{p+q}(C^\bullet_{(C,M,\mathcal D_H)})\longrightarrow HC^{p+q}(C,\mathcal D_H)$. Composing with the morphism in \eqref{pomc1} gives us the pairing in \eqref{rpi1}. 

\smallskip
Finally, when $C$ acts on $\mathcal D_H$, we have an inclusion $i:\mathcal D_H\hookrightarrow (C,\mathcal D_H)$ given by $i(f)(c):=cf$ for any morphism $f\in Hom_{\mathcal D_H}(X,Y)$
and $c\in C$. Then, $i$ induces a morphism $HC^{p+q}(C,\mathcal D_H)\longrightarrow HC^{p+q}(\mathcal D_H)$. Composing with the pairing in \eqref{rpi1} now gives us
the pairing in \eqref{rpi2}. 
\end{proof}
  
\section{Traces, cocycles and DGH-semicategories}\label{SectionDG}

Our purpose is to develop a formalism analogous to that of Connes \cite{C2} in order to interpret the cocycles $Z^\bullet_H(\mathcal D_H,M)$, $Z^\bullet(\mathcal{D}_H)$  and the coboundaries $B^\bullet_H(\mathcal D_H,M)$, $B^\bullet(\mathcal{D}_H)$ as characters of differential graded  semicategories. 
 In this section, we  will describe $Z^\bullet_H(\mathcal{D}_H,M)$ and  $Z^\bullet(\mathcal{D}_H)$, for which we will need  the framework of DG-semicategories. Let us first recall the notion of a semicategory introduced by Mitchell in \cite{Mit} (for more on semicategories, see, for instance, \cite{BoLS}). 

\begin{definition}(see \cite[Section 4]{Mit})
A semicategory $\mathcal C$ consists of a collection $Ob(\mathcal{C})$ of objects together with a set of morphisms $Hom_{\mathcal{C}}(X,Y)$ for each $X,Y \in Ob(\mathcal{C})$ and an associative composition. A semifunctor $F:\mathcal{C} \longrightarrow \mathcal{C}'$ between semicategories assigns an object $F(X) \in Ob(\mathcal{C}')$ to each $X \in Ob(\mathcal{C})$
and a morphism $F(f) \in Hom_{\mathcal C'}(F(X),F(Y))$ to each $f \in Hom_\mathcal{C}(X,Y)$ and preserves composition. 

\smallskip
A left $H$-semicategory is a small $k$-linear semicategory $\mathcal{S}_H$ such that
\begin{itemize}
\item[(i)] $Hom_{\mathcal{S}_H}(X,Y)$ is a left $H$-module for all $X,Y \in Ob(\mathcal{S}_H)$ 
\item[(ii)] $h(gf)=(h_1g)(h_2f)$
for any $h \in H$, $f \in Hom_{\mathcal{S}_H}(X,Y)$ and  $g \in Hom_{\mathcal{S}_H}(Y,Z)$. 
\end{itemize}
\end{definition}

It is clear that any ordinary category may be treated as a semicategory. Conversely, to any $k$-semicategory $\mathcal{C}$, we can associate an ordinary $k$-category
$\tilde{\mathcal{C}}$ by adjoining  unit morphisms as follows:
\begin{align*}
Ob(\tilde{\mathcal{C}}):&=Ob(\mathcal{C})\\
Hom_{\tilde{\mathcal{C}}}(X,Y):&=\left\{\begin{array}{ll} Hom_\mathcal{C}(X,X) \bigoplus k &  \mbox{if $X=Y$}\\
Hom_\mathcal{C}(X,Y) & \mbox{if $X \neq Y$} \\ \end{array}\right.
\end{align*}

A  morphism in $Hom_{\tilde{\mathcal{C}}}(X,Y)$ will be denoted by $\tilde{f}=f+\mu$, where $f \in Hom_{\mathcal{C}}(X,Y)$ and $\mu \in k$. It is understood that $\mu=0$ whenever $X \neq Y$. Any semifunctor $F:\mathcal{C} \longrightarrow \mathcal{D}$ where $\mathcal D$ is an ordinary category may  be extended to an ordinary functor $\tilde{F}:\tilde{\mathcal{C}} \longrightarrow \mathcal{D}$. If $\mathcal S_H$ is a left $H$-semicategory, we note that $\tilde{\mathcal S}_H$ is a left $H$-category in the sense of Definition \ref{defH-cat}. 

\smallskip
Next we  recall the notion of the tensor product of complexes. Let $(A^\bullet,\partial_A):=\ldots \longrightarrow A^n \stackrel{\partial^n_A}\longrightarrow A^{n+1} \longrightarrow \ldots  $ and $(B^\bullet,\partial_B):=\ldots \longrightarrow B^n \stackrel{\partial^n_B}\longrightarrow B^{n+1} \longrightarrow \ldots$ be two cochain complexes. Then, their tensor product $A^\bullet \otimes B^\bullet$ also forms a cochain complex which is defined as follows:
\begin{align*}
(A^\bullet \otimes B^\bullet)^n:&=\bigoplus_{i+j=n} A^i \otimes B^j\\
\partial^n_{A \otimes B}:&=\bigoplus_{i+j=n} \left(\partial^i_A \otimes 1_{B_j} + (-1)^i 1_{A_i} \otimes \partial^j_B\right)
\end{align*} 

\begin{definition}
A differential graded semicategory (DG-semicategory) $(\mathcal{S},\hat\partial)$ is a $k$-linear semicategory $\mathcal{S}$ such that
\begin{itemize}
\item[(i)] $Hom^\bullet_\mathcal{S}(X,Y)=\big(Hom^n_\mathcal{S}(X,Y),\hat\partial^n_{XY}\big)_{n \geq 0}$ is a cochain complex of $k$-spaces for each $X,Y \in Ob(\mathcal{S})$.

\item[(ii)] the composition map
$$Hom^\bullet_\mathcal{S}(Y,Z) \otimes Hom^\bullet_\mathcal{S}(X,Y) \longrightarrow Hom^\bullet_\mathcal{S}(X,Z)$$
is a morphism of complexes. Equivalently,
\begin{equation}
\hat\partial^n_{XZ}(gf)=\hat\partial^{n-r}_{YZ}(g)f+(-1)^{n-r}g\hat\partial^{r}_{XY}(f) \label{comp}
\end{equation}
for any $f \in Hom_\mathcal{S}(X,Y)^r$ and $g \in Hom_\mathcal{S}(Y,Z)^{n-r}$.
\end{itemize} Whenever the meaning is clear from context, we will drop the subscript and simply write $\hat\partial^\bullet$ for the differential
on any $Hom^\bullet_{\mathcal S}(X,Y)$. 
\end{definition}

A DG-semicategory with a single object is the same as a differential graded (but not necessarily unital) $k$-algebra. Accordingly, any small DG-semicategory may be treated as a differential 
graded (but  not necessarily unital) $k$-algebra with several objects. The DG-semicategories may be treated in a manner similar to DG-categories (see, for instance, \cite{Ke1}, \cite{Ke2}). 

\begin{definition}
A DG-semifunctor $\alpha:(\mathcal{S},\hat\partial)\longrightarrow (\mathcal{S}',\hat\partial')$ between two DG-semicategories  is a $k$-linear semifunctor $\alpha:\mathcal{S} \longrightarrow \mathcal{S}'$ such that the induced map $Hom^\bullet_\mathcal{S}(X,Y) \longrightarrow Hom^\bullet_{\mathcal{S}'}(\alpha X,\alpha Y)$, $f \mapsto \alpha (f)$, is a morphism of complexes for each $X,Y \in Ob(\mathcal{S})$. 
\end{definition}

\begin{remark}\label{0dg}
We observe that corresponding to any DG-semicategory $\mathcal{S}$, there is a semicategory $\mathcal{S}^0$ defined as:
\begin{align*}
Ob(\mathcal{S}^0):&=Ob(\mathcal{S})\\
Hom_{\mathcal{S}^0}(X,Y):&=Hom^0_\mathcal{S}(X,Y)
\end{align*}
The composition   in $\mathcal{S}$ induces a well-defined composition $Hom_{\mathcal{S}^0}(Y,Z) \otimes Hom_{\mathcal{S}^0}(X,Y) \longrightarrow Hom_{\mathcal{S}^0}(X,Z)$.
\end{remark}

We now construct a ``universal DG-semicategory'' associated to a given $k$-linear semicategory, similar to the construction of the universal differential graded  algebra
associated to a  (not necessarily unital) $k$-algebra (see, for instance, \cite[p. 315]{C2}). 

\smallskip
Let $\Omega\mathcal{C}$ be the semicategory with $Ob(\Omega\mathcal{C}):=Ob(\mathcal{C})$ and  $Hom_{\Omega \mathcal{C}}(X,Y)=\bigoplus\limits_{n \geq 0}Hom^n_{\Omega \mathcal{C}}(X,Y)$, where
\begin{equation}\label{xudga} Hom^n_{\Omega\mathcal{C}}(X,Y):=
\left\{
\begin{array}{ll}
Hom_{\mathcal C}(X,Y) & \mbox{if $n=0$} \\
\\ \underset{(X_1,...,X_n)\in Ob(\mathcal C)^n}{\mbox{\Large $\bigoplus$}}Hom_{\tilde{\mathcal C}}(X_1,Y)\otimes Hom_{\mathcal C}
(X_2,X_1)\otimes \dots \otimes Hom_{\mathcal C}(X,X_n) & \mbox{if $n\geq 1$} \\
\end{array} \right.
\end{equation}
Here the sum runs over the ordered tuples $(X_1,...,X_n)\in Ob(\mathcal C)^n$.
 In particular, $(\Omega\mathcal{C})^0={\mathcal{C}}$.  For $n\geq 1$, an element of the form $\tilde{f}^0\otimes f^1\otimes ...\otimes f^n$ in $Hom^n_{\Omega\mathcal{C}}(X,Y)$ will be denoted by  $\tilde{f}^0df^1 \ldots df^n=(f^0+\mu) df^1\dots df^n$   
and said to be homogeneous of degree $n$. By abuse of notation, we will continue to use $\tilde{f}^0df^1 \ldots df^n=(f^0+\mu) df^1\dots df^n$ to denote an element 
of $Hom^n_{\Omega\mathcal{C}}(X,Y)$ even when $n=0$. In that case, it will be understood that $\mu=0$. 

\smallskip
The composition in $\Omega\mathcal{C}$ is determined by
\begin{equation}\label{symb}
f^0\circ df^1\circ \dots \circ df^n= f^0df^1\dots df^n \qquad (df^0)\circ f^1=d(f^0f^1)-f^0(df^1) \qquad  df^1\circ \dots \circ df^n= df^1\dots df^n
\end{equation}
In particular, it follows that
\begin{equation}\label{compoDG}
 \begin{array}{l}
((f^0+\mu)df^1...df^i)\cdot ((g^0+\mu')dg^1...dg^j)\\
= (f^0+\mu)\left(df^1....df^{i-1}d(f^ig^0)dg^1...dg^j+\underset{l=1}{\overset{i-1}{\sum}}(-1)^{i-l}df^1...d(f^lf^{l+1})...df^idg^0dg^1...dg^j\right)\\
\quad + (-1)^i(f^0+\mu)f^1df^2...df^idg^0dg^1...dg^j+\mu' (f^0+\mu)df^1...df^idg^1...dg^j\\
\end{array}
\end{equation}
For each $X,Y \in Ob(\Omega\mathcal{C})$,  the differential $\partial^n_{XY}:Hom^n_{\Omega\mathcal{C}}(X,Y) \longrightarrow Hom^{n+1}_{\Omega\mathcal{C}}(X,Y)$ is determined by setting
$$\partial^n_{XY}((f^0+\mu)df^1 \ldots df^n):=df^0df^1 \ldots df^n$$ It follows from definition that $\partial^{n+1}_{XY}\circ \partial^n_{XY}=0$. Therefore, $Hom^\bullet_{\Omega\mathcal{C}}(X,Y):=\big(Hom^n_{\Omega\mathcal{C}}(X,Y),\partial^n_{XY}\big)_{n \geq 0}$ is a cochain complex for each $X, Y \in Ob(\Omega\mathcal{C})$. It may also be verified  that the composition in $\Omega\mathcal{C}$ is a morphism of complexes. Thus, $\Omega\mathcal{C}$ is a DG-semicategory.
 
\begin{prop}\label{construniv} Let $\mathcal C$ be a small $k$-linear semicategory. Then, the associated DG-semicategory $(\Omega\mathcal C,\partial)$  is universal in the following sense: given
\begin{itemize}
\item[(i)] any DG-semicategory $(\mathcal{S},\hat\partial)$ and 
\item[(ii)] a $k$-linear semifunctor $\rho:\mathcal{C} \longrightarrow \mathcal{S}^0$,
\end{itemize}
there exists a unique DG-semifunctor $\hat{\rho}:(\Omega\mathcal{C},\partial) \longrightarrow (\mathcal{S},\hat\partial)$ such that the restriction of $\hat{\rho}$ to  the semicategory $\mathcal{C}$ is identical to $\rho: \mathcal{C} \longrightarrow \mathcal{S}^0$.
\end{prop}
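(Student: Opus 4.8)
The plan is to construct $\Omega\mathcal{C}$ as the categorical analogue of Connes' universal differential graded algebra and then to verify the stated universal property directly. I would take $Ob(\Omega\mathcal{C}):=Ob(\mathcal{C})$ and, for $X,Y\in Ob(\mathcal{C})$ and $n\geq 1$, set
\[
Hom^n_{\Omega\mathcal{C}}(X,Y):=\bigoplus Hom_{\tilde{\mathcal{C}}}(X_1,Y)\otimes Hom_{\mathcal{C}}(X_2,X_1)\otimes\cdots\otimes Hom_{\mathcal{C}}(X,X_n),
\]
the sum being over all $(X_1,\dots,X_n)\in Ob(\mathcal{C})^n$, together with $Hom^0_{\Omega\mathcal{C}}(X,Y):=Hom_{\tilde{\mathcal{C}}}(X,Y)$. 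A generator would be written suggestively as $\tilde{f}^0\,df^1\cdots df^n$ with $\tilde{f}^0\in Hom_{\tilde{\mathcal{C}}}$ and $f^1,\dots,f^n\in Hom_{\mathcal{C}}$, in imitation of the scalar expression $\tilde{a}_0\,da_1\cdots da_n$. The differential is defined on generators by
\[
\partial(\tilde{f}^0\,df^1\cdots df^n):=df^0\,df^1\cdots df^n,
\]
with the convention $df^0:=0$ whenever $\tilde{f}^0$ lies in the adjoined copy of $k$. Because a second application of $\partial$ would prepend $d(\mathrm{id})=0$, one has $\partial^2=0$, so each $Hom^\bullet_{\Omega\mathcal{C}}(X,Y)$ is a cochain complex. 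The identification $(\Omega\mathcal{C})^0=\tilde{\mathcal{C}}$ on objects and hom-spaces is then immediate from the definition of $Hom^0_{\Omega\mathcal{C}}$.

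Next I would equip $\Omega\mathcal{C}$ with a composition. It is forced by requiring that $\partial$ obey the graded Leibniz rule \eqref{comp} and that the degree-$0$ composition be that of $\tilde{\mathcal{C}}$: writing $df=\partial f$ and applying \eqref{comp} repeatedly to carry the leading coefficient of the second factor to the left, every product of generators reduces to a $k$-linear combination of standard generators. Explicitly, for composable $\omega=\tilde{f}^0\,df^1\cdots df^n$ and $\eta=\tilde{g}^0\,dg^1\cdots dg^m$ (so that the source of $f^n$ equals the target of $\tilde{g}^0$), and with the convention $f^{n+1}:=\tilde{g}^0$, the product is
\[
\omega\eta=\sum_{i=1}^{n}(-1)^{\,n-i}\,\tilde{f}^0\,df^1\cdots d\!\left(f^if^{i+1}\right)\cdots df^{n+1}\,dg^1\cdots dg^m+(-1)^{n}\,(\tilde{f}^0 f^1)\,df^2\cdots df^{n+1}\,dg^1\cdots dg^m.
\]
Since $f^if^{i+1}$ and $\tilde{f}^0f^1$ are composites of morphisms whose sources and targets already match, the object-bookkeeping is automatic and $\omega\eta$ lands in $Hom^{n+m}_{\Omega\mathcal{C}}$ from the source of $\eta$ to the target of $\omega$. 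On degree-$0$ elements this formula recovers the composition of $\tilde{\mathcal{C}}$, confirming $(\Omega\mathcal{C})^0=\tilde{\mathcal{C}}$ as semicategories.

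I would then establish the universal property. Since $\mathcal{S}^0$ is an ordinary category, the $k$-linear semifunctor $\rho:\mathcal{C}\longrightarrow\mathcal{S}^0$ extends uniquely to a $k$-linear functor $\tilde{\rho}:\tilde{\mathcal{C}}\longrightarrow\mathcal{S}^0$ sending each adjoined unit to the corresponding identity. I would then define $\hat{\rho}$ to agree with $\rho$ on objects and, on generators, by
\[
\hat{\rho}(\tilde{f}^0\,df^1\cdots df^n):=\tilde{\rho}(\tilde{f}^0)\,\hat\partial\big(\rho(f^1)\big)\cdots\hat\partial\big(\rho(f^n)\big).
\]
Compatibility with the differentials is immediate from the definition of $\partial$ together with $\hat\partial(\mathrm{id})=0$ in $\mathcal{S}$ (which itself follows from \eqref{comp}), while compatibility with composition follows because $\hat\partial$ is a graded derivation satisfying \eqref{comp}, so that the Leibniz expansion defining $\omega\eta$ is transported term by term to the corresponding identity in $\mathcal{S}$. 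Restricting $\hat\rho$ to $(\Omega\mathcal{C})^0=\tilde{\mathcal{C}}$ and then to $\mathcal{C}$ recovers $\rho$. For uniqueness, I would note that $\Omega\mathcal{C}$ is generated as a DG-semicategory by its degree-$0$ part $\tilde{\mathcal{C}}$ together with the exact one-forms $df=\partial f$; any DG-semifunctor restricting to $\rho$ on $\mathcal{C}$ is forced to send $\tilde{f}^0\mapsto\tilde{\rho}(\tilde{f}^0)$ and $df^i\mapsto\hat\partial(\rho(f^i))$, hence to agree with $\hat\rho$ everywhere.

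The step I expect to be the main obstacle is the purely algebraic verification that the composition above is associative and that $\partial$ genuinely satisfies \eqref{comp}. This is the same telescoping sign computation as in Connes' scalar construction, but now performed fibrewise over the chains of objects, so at each application of the Leibniz rule one must check that the objects of the morphisms being composed match. Once associativity and the Leibniz identity are secured, the remaining claims — $\partial^2=0$, the identification $(\Omega\mathcal{C})^0=\tilde{\mathcal{C}}$, and both existence and uniqueness in the universal property — are formal consequences.
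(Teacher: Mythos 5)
Your construction is the same as the paper's: identical hom-spaces $\bigoplus Hom_{\tilde{\mathcal C}}(X_1,Y)\otimes Hom_{\mathcal C}(X_2,X_1)\otimes\cdots\otimes Hom_{\mathcal C}(X,X_n)$, the same differential killing the adjoined scalars, a composition law that (after expanding $d(f^n\tilde g^0)=d(f^ng^0)+\mu'\,df^n$ by linearity) coincides term by term with the paper's formula \eqref{compoDG} including its $\mu'$-term, and the same extension $\hat\rho$ and uniqueness argument. The proposal is correct and follows essentially the same route, with the same verifications (associativity, the Leibniz identity \eqref{comp}) left as routine computations, just as in the paper.
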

\begin{proof}
We extend $\rho$ to obtain a DG-semifunctor $\hat{\rho}:(\Omega\mathcal{C},\partial) \longrightarrow (\mathcal{S},\hat{\partial})$ as follows:
\begin{equation}\label{cat1}
\begin{array}{c}
\hat{\rho}(X):=\rho(X)\\
\hat{\rho}((f^0+\mu)df^1\ldots df^n):=\rho(f^0)\circ\hat{\partial}^0(\rho(f^1))\circ \ldots\circ \hat{\partial}^0(\rho(f^n))+\mu \hat{\partial}^0(\rho(f^1)) \circ \ldots \circ \hat{\partial}^0(\rho(f^n))
\end{array}
\end{equation}
for all $X \in Ob(\Omega\mathcal{C})=Ob(\mathcal{C})$ and $(f^0+\mu)df^1\ldots df^n \in Hom^n_{\Omega\mathcal{C}}(X,Y)$, $n\geq 1$. Since each $\rho(f^i)$ is a morphism of degree $0$ in $\mathcal{S}$, it follows from \eqref{comp} and \eqref{compoDG} that 
\begin{equation} \hat{\rho}(((f^0+\mu)df^1...df^n)\circ ((f^{n+1}+\mu')df^{n+2}... df^m))=\hat{\rho}((f^0+\mu)df^1...df^n)\circ \hat{\rho}((f^{n+1}+\mu')df^{n+2}... df^m)
\end{equation} It is also clear by construction that $\hat{\rho}|_{\mathcal{C}}=\rho$. Moreover, we have
\begin{equation*}
\begin{array}{ll}
\hat{\partial}^n\left(\hat{\rho}((f^0+\mu)df^1\ldots df^n)\right)&= \hat{\partial}^n\left(\rho(f^0)\hat{\partial}^0(\rho(f^1)) \ldots \hat{\partial}^0(\rho(f^n))\right)+
\mu \hat{\partial}^n\left( \hat{\partial}^0(\rho(f^1)) \ldots \hat{\partial}^0(\rho(f^n))\right)\\
&= {\hat{\partial}}^0(\rho(f^0)) {\hat{\partial}}^0(\rho(f^1)) \ldots  {\hat{\partial}}^0(\rho(f^n))+\rho(f^0) {\hat{\partial}}^n\left({ \hat{\partial}}^0(\rho(f^1)) \ldots  {\hat{\partial}}^0(\rho(f^n))\right)\\
&= \hat{\partial}^0(\rho(f^0)) \hat{\partial}^0(\rho(f^1)) \ldots  \hat{\partial}^0(\rho(f^n))=\hat{\rho}\left(\partial^n((f^0+\mu)df^1\ldots df^n)\right)
\end{array}
\end{equation*}
The uniqueness of $\hat\rho$ is also clear from \eqref{symb} and \eqref{compoDG}. 
\end{proof}

\begin{definition}\label{DGH}
A left DGH-semicategory   is a left $H$-semicategory $\mathcal S_H$  equipped with a DG-semicategory $(\mathcal S_H,\hat\partial_H)$ structure such that for all $n\geq 0$:

\smallskip
(a)  $Hom^n_{\mathcal S_H}(X,Y)$ is a left $H$-module for $X,Y \in Ob(\mathcal S_H)$. 

(b) $\hat\partial^n_{H}:Hom^n_{\mathcal S_H}(X,Y) \longrightarrow Hom^{n+1}_{\mathcal S_H}(X,Y)$ is $H$-linear for $X,Y \in Ob(\mathcal S_H)$. 
\end{definition}

We can similarly define the notion of a DGH-semifunctor between DGH-semicategories. If $(\mathcal S_H,\hat\partial_H)$  is a left DGH-semicategory, we note that  $\mathcal S_H^0$  is a left $H$-semicategory.

\begin{prop}
Let $\mathcal{D}_H$ be a left $H$-category. Then, the universal DG-semicategory $(\Omega(\mathcal{D}_H),\partial_H)$ associated to $\mathcal{D}_H$ is a left DGH-semicategory with the $H$-action determined by
\begin{equation}\label{comp4.8}
h \cdot \left((f^0+\mu)df^1 \ldots df^n\right):=(h_1f^0+\mu \varepsilon(h_1))d(h_2f^1) \ldots d(h_{n+1}f^n) 
\end{equation}
for all $h \in H$ and $(f^0+\mu)df^1 \ldots df^n\in Hom_{\Omega(\mathcal D_H)}(X,Y)$.
\end{prop}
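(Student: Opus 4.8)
The plan is to check the two conditions of Definition \ref{DGH} for $(\Omega(\mathcal{D}_H),\partial_H)$ with the action \eqref{comp4.8}, after first verifying that \eqref{comp4.8} genuinely defines a left $H$-module structure on each morphism complex. The cleanest way to see the latter is to recognize \eqref{comp4.8} as the diagonal action of $H$ on the tensor product \eqref{xudga}. Since $\mathcal{D}_H$ is a left $H$-category, so is $\tilde{\mathcal{D}}_H$, so the leading factor $Hom_{\tilde{\mathcal{D}}_H}(X_1,Y)$ and each remaining factor $Hom_{\mathcal{D}_H}(X_{i+1},X_i)$ is a left $H$-module; the action on the adjoined summand $k\subseteq Hom_{\tilde{\mathcal{D}}_H}(X_1,X_1)$ is $h\cdot\mu=\varepsilon(h)\mu$, which accounts for the term $\mu\varepsilon(h_1)$ in \eqref{comp4.8}. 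Distributing the iterated coproduct $\Delta^{(n)}(h)=h_1\otimes\cdots\otimes h_{n+1}$ over the factors then yields a well-defined $H$-module structure by coassociativity of $\Delta$, establishing Definition \ref{DGH}(a). Two consequences of \eqref{comp4.8} that I would record for repeated use are $h\cdot f=hf$ on a degree-$0$ morphism and $h\cdot(df)=\varepsilon(h_1)\,d(h_2 f)=d(hf)$, so that the action commutes with $d$ on the generators $df$.

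Next I would dispatch the $H$-linearity of $\partial_H$ (Definition \ref{DGH}(b)) directly. Since $\partial_H((f^0+\mu)df^1\ldots df^n)=df^0 df^1\ldots df^n$ annihilates the scalar part, applying \eqref{comp4.8} to the degree-$(n+1)$ monomial $df^0df^1\ldots df^n$, whose leading coefficient is the adjoined unit, gives $\varepsilon(h_1)\,d(h_2 f^0)\ldots d(h_{n+2}f^n)=d(h_1 f^0)\ldots d(h_{n+1}f^n)$ by the counit axiom, which is exactly $\partial_H(h\cdot\omega)$. Hence $h\cdot\partial_H=\partial_H\cdot h$.

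The substantive step is to show that $\Omega(\mathcal{D}_H)$ is a left $H$-semicategory, i.e.\ that the composition \eqref{compoDG} satisfies $h(\omega\circ\omega')=(h_1\omega)(h_2\omega')$. Rather than expanding \eqref{compoDG} on arbitrary monomials, I would exploit that, by \eqref{symb}, every homogeneous element is a composite $\tilde{f}^0\circ df^1\circ\cdots\circ df^n$ of \emph{generators}, where a generator is a degree-$0$ morphism $\tilde{f}^0$ or a degree-$1$ element $df$. I would first verify compatibility on all pairs of generators: the $(\deg 0,\deg 0)$ and the unit cases follow from axiom (iii) of Definition \ref{defH-cat} and $h\cdot 1=\varepsilon(h)1$; the $(\deg 0,\deg 1)$ and $(\deg 1,\deg 1)$ cases follow from $h\cdot(df)=d(hf)$ together with $f\circ dg=f\,dg$ and $df\circ dg=df\,dg$; and the crucial $(\deg 1,\deg 0)$ case uses the Leibniz instance $df\circ g=d(fg)-f\,dg$, where one computes $h(df\circ g)=d\big((h_1 f)(h_2 g)\big)-(h_1 f)d(h_2 g)=d(h_1 f)(h_2 g)=(h_1\cdot df)\circ(h_2\cdot g)$, the extra Leibniz term cancelling precisely. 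Once the pairs are settled, the general identity follows by induction on the number of generator-factors: one peels off a leading generator using associativity of composition and realigns the Sweedler legs using coassociativity of $\Delta$, which is the standard argument that a diagonal action is compatible with an associative multiplication determined on generators.

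I expect the $(\deg 1,\deg 0)$ case to be the main obstacle, since it is the only pairing for which the graded Leibniz rule in \eqref{compoDG} contributes a correction term; its resolution rests on the Leibniz rule for $d$ being compatible with $\Delta$, the one point where the $H$-category axiom $h(fg)=(h_1 f)(h_2 g)$ and the relation $h\cdot df=d(hf)$ must conspire for the correction to vanish. A secondary bookkeeping issue is keeping the iterated-coproduct indices aligned across the monomials of differing degree that appear in the inductive step, which is handled by the counit and coassociativity axioms.
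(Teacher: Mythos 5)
Your proposal is correct; the paper simply declares the result ``immediate from the definitions in \eqref{compoDG} and \eqref{comp4.8}'', and your argument fills in exactly that direct verification — the diagonal $H$-module structure, the commutation of the action with $\partial_H$, and the compatibility with composition reduced to generators, with the $(\deg 1,\deg 0)$ Leibniz case correctly identified and checked as the only nontrivial cancellation. No gap; this is the intended proof written out in full.
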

\begin{proof} This  is immediate from the definitions in \eqref{compoDG} and \eqref{comp4.8}.
\end{proof} 

\begin{definition}\label{gradedtrace}
Let $(\mathcal S_H,\hat\partial_H)$ be a left DGH-semicategory and $M$ be a right-left SAYD module over $H$. A closed graded $(H,M)$-trace of dimension $n$ on $\mathcal S_H$ is a collection of $k$-linear maps $$\hat{\mathscr{T}}^H:=\{\hat{\mathscr{T}}_X^H:M \otimes Hom^n_{\mathcal S_H}(X,X) \longrightarrow k\}_{X \in Ob(\mathcal S_H)}$$ such that
\begin{align}
&\hat{\mathscr{T}}_X^H\big(mh_1 \otimes S(h_2)f\big)=\varepsilon(h)\hat{\mathscr{T}}_X^H(m \otimes f)\label{gt0}\\
&\hat{\mathscr{T}}_X^H\big(m \otimes \hat\partial_{H}^{n-1}(f')\big)=0 \label{gt1}\\
&\hat{\mathscr{T}}_X^H\big(m \otimes g'g)=(-1)^{ij}~\hat{\mathscr{T}}_Y^{H}\big(m_{(0)} \otimes \left(S^{-1}(m_{(-1)})g\right)g'\big)\label{gt2}
\end{align}
for all $h \in H$, $m \in M$, $f \in Hom^n_{\mathcal S_H}(X,X)$, $f' \in Hom^{n-1}_{\mathcal S_H}(X,X)$, $g \in Hom^i_{\mathcal S_H}(X,Y)$, $g' \in Hom^j_{\mathcal S_H}(Y,X)$ and $i+j=n$.

\smallskip

In particular, putting $H=k=M$, we get: 
a closed graded trace of dimension $n$ on  a DG-semicategory ($\mathcal S,\hat\partial)$ is a collection of $k$-linear maps $\hat T:=\{\hat T_X: Hom^n_\mathcal{S}(X,X) \longrightarrow k\}_{X \in Ob(\mathcal{S})}$ such that
\begin{align}
&\hat T_X\left(\hat\partial^{n-1}(f)\right)=0 \label{gh1}\\
&\hat T_X(g'g)=(-1)^{ij}~ \hat T_Y(gg') \label{gh2}
\end{align}
for all $f \in Hom^{n-1}_{\mathcal S}(X,X)$, $g \in Hom^i_{\mathcal S}(X,Y)$, $g' \in Hom^j_{\mathcal S}(Y,X)$ and $i+j=n$.
\end{definition}

\begin{definition}\label{cycle} 
An $n$-dimensional $\mathcal S_H$-cycle with coefficients in a SAYD module $M$ is a triple 
$(\mathcal{S}_H,\hat{\partial}_H,\hat{\mathscr T}^H)$ such that
\begin{itemize}
\item[(i)] $(\mathcal S_H,\hat\partial_H)$ is a left DGH-semicategory.
\item[(ii)]  $\hat{\mathscr T}^H$ is a closed graded $(H,M)$-trace of dimension $n$ on $\mathcal{S}_H$.
\end{itemize} Let $\mathcal{D}_H$ be a left $H$-category. By an $n$-dimensional cycle over $\mathcal D_H$, we mean a tuple $(\mathcal{S}_H,\hat{\partial}_H, \hat{\mathscr T}^H,\rho)$ such that 
\begin{itemize}
\item[(i)] $(\mathcal{S}_H,\hat{\partial}_H, \hat{\mathscr T}^H)$ is an $n$-dimensional $\mathcal S_H$-cycle with coefficients in a SAYD module $M$.
\item[(ii)] $\rho:\mathcal{D}_H \longrightarrow \mathcal{S}_H^0$ is an $H$-linear semifunctor.
\end{itemize}
\end{definition}

In particular, putting $H=k=M$, we get:
\begin{definition}
An $n$-dimensional $\mathcal S$-cycle is a triple 
$(\mathcal{S},\hat{\partial},\hat{T})$ such that
\begin{itemize}
\item[(i)] $(\mathcal S,\hat\partial)$ is a DG-semicategory.
\item[(ii)]  $\hat{T}$ is a closed graded trace of dimension $n$ on $\mathcal{S}$.
\end{itemize} Let $\mathcal{C}$ be a small $k$-linear category. By an $n$-dimensional cycle over $\mathcal C$, we mean a tuple $(\mathcal{S},\hat{\partial},\hat{T},\rho)$ such that 
\begin{itemize}
\item[(i)] $(\mathcal{S},\hat{\partial},\hat{T})$ is an $n$-dimensional $\mathcal S$-cycle.
\item[(ii)] $\rho:\mathcal C \longrightarrow \mathcal{S}^0$ is a $k$-linear   semifunctor.
\end{itemize}
\end{definition}

We will denote by $Cat_H$ the category of all left $H$-categories with $H$-linear functors between them. We fix a left $H$-category $\mathcal D_H$. Given an $n$-dimensional cycle $(\mathcal{S}_H,\hat{\partial}_H, \hat{\mathscr T}^H,\rho)$  over $\mathcal{D}_H$, we define its character $\phi\in C^n_H(\mathcal D_H,M)$ by setting
\begin{equation*}
\phi:M\otimes CN_n(\mathcal D_H)\longrightarrow k\qquad \phi(m \otimes f^0 \otimes \ldots \otimes f^n):=\hat{\mathscr T}^H_{X_0}\big(m \otimes \rho(f^0)\hat{\partial}^0_H\left(\rho(f^1)\right) \ldots \hat{\partial}^0_H\left(\rho(f^n)\right)\big)
\end{equation*} 
for $m \in M$ and $f^0 \otimes \ldots \otimes f^n \in Hom_{\mathcal{D}_H}(X_1,X_0) \otimes Hom_{\mathcal{D}_H}(X_2,X_1) \otimes \ldots \otimes Hom_{\mathcal{D}_H}(X_0,X_n)$. We will often suppress the semifunctor $\rho$ and refer to
$\phi$ simply as the character of the $n$-dimensional cycle   $(\mathcal{S}_H,\hat{\partial}_H, \hat{\mathscr T}^H)$. 

\smallskip
The next result provides a characterization of the space $Z^n_H(\mathcal{D}_H,M)$ of $n$-cocycles in the Hopf-cyclic cohomology of the category $\mathcal{D}_H$ with coefficients in the SAYD module $M$.

\begin{theorem}\label{charcycl}
Let $\mathcal{D}_H$ be a left $H$-category and $M$ be a right-left SAYD module over $H$. Let $\phi \in C^n_H(\mathcal{D}_H,M)$. Then, the following conditions are equivalent:
\begin{itemize}
\item[(1)] $\phi$ is the character of an $n$-dimensional cycle over $\mathcal D_H$, i.e., there is an $n$-dimensional cycle $(\mathcal{S}_H,\hat{\partial}_H, \hat{\mathscr T}^H)$ with coefficients in $M$ and an $H$-linear semifunctor $\rho:\mathcal{D}_H \longrightarrow \mathcal{S}_H^0$ such that
\begin{equation}\label{eq1}
\begin{array}{ll}
\phi(m \otimes f^0 \otimes \ldots \otimes f^n)
&=\hat{\mathscr T}^H_{X_0}((id_M \otimes \hat{\rho})(m \otimes f^0df^1\ldots df^n))\vspace{0.02in}\\ 
&=\hat{\mathscr T}^H_{X_0}\big(m \otimes \rho(f^0)\hat{\partial}_H^0\left(\rho(f^1)\right) \ldots \hat{\partial}_H^0\left(\rho(f^n)\right)\big)\\
\end{array}
\end{equation}
for any $m \in M$ and $f^0 \otimes \ldots \otimes f^n \in Hom_{\mathcal{D}_H}(X_1,X_0) \otimes Hom_{\mathcal{D}_H}(X_2,X_1) \otimes \ldots \otimes Hom_{\mathcal{D}_H}(X_0,X_n)$.
\item[(2)] There exists a closed graded $(H,M)$-trace $\mathscr{T}^H$ of dimension $n$ on $\left(\Omega(\mathcal{D}_H),\partial_H\right)$ such that
\begin{equation}\label{eq2}
\phi(m \otimes f^0 \otimes \ldots \otimes f^n)=\mathscr{T}^H_{X_0}(m \otimes f^0df^1 \ldots df^n)
\end{equation}
for any $m \in M$ and $f^0 \otimes \ldots \otimes f^n \in Hom_{\mathcal{D}_H}(X_1,X_0) \otimes Hom_{\mathcal{D}_H}(X_2,X_1) \otimes \ldots \otimes Hom_{\mathcal{D}_H}(X_0,X_n)$.
\item[(3)] $\phi \in Z^n_H(\mathcal{D}_H,M)$. 
\end{itemize}
\end{theorem}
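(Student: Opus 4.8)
The plan is to establish the triple equivalence by splitting it into two halves. First I would prove $(1)\Leftrightarrow(2)$, which is essentially a restatement of the universal property of $(\Omega(\mathcal{D}_H),\partial_H)$ from Proposition \ref{construniv}. Then I would prove $(2)\Leftrightarrow(3)$, which is the $H$-linear, SAYD-coefficient version of Connes' identification of closed graded traces on a universal DGA with cyclic cocycles. Together these give $(1)\Leftrightarrow(2)\Leftrightarrow(3)$.

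For $(2)\Rightarrow(1)$ there is nothing to do beyond bookkeeping: one takes $\mathcal{S}_H=\Omega(\mathcal{D}_H)$, $\hat{\partial}_H=\partial_H$, $\hat{\mathscr{T}}^H=\mathscr{T}^H$, and $\rho\colon\mathcal{D}_H\hookrightarrow(\Omega(\mathcal{D}_H))^0=\tilde{\mathcal{D}}_H$ the canonical $H$-linear inclusion; since the induced $\hat{\rho}$ is then the identity on $\Omega(\mathcal{D}_H)$, the character of this cycle is exactly $\phi$ by \eqref{eq2}. For $(1)\Rightarrow(2)$, given a cycle $(\mathcal{S}_H,\hat{\partial}_H,\hat{\mathscr{T}}^H,\rho)$, I would invoke Proposition \ref{construniv} to obtain the unique DG-semifunctor $\hat{\rho}\colon\Omega(\mathcal{D}_H)\to\mathcal{S}_H$ extending $\rho$; since $\rho$ and $\hat{\partial}^0_H$ are $H$-linear, the explicit formula \eqref{cat1} shows $\hat{\rho}$ is in fact a DGH-semifunctor. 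I would then define $\mathscr{T}^H_X(m\otimes\omega):=\hat{\mathscr{T}}^H_{\rho(X)}(m\otimes\hat{\rho}(\omega))$ and check that \eqref{gt0}, \eqref{gt1}, \eqref{gt2} transfer along $\hat{\rho}$; each is immediate, using respectively the $H$-linearity of $\hat{\rho}$, the fact that $\hat{\rho}$ intertwines $\partial_H$ with $\hat{\partial}_H$, and the fact that $\hat{\rho}$ preserves composition and degree. The character identity \eqref{eq2} then holds by construction.

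For $(2)\Rightarrow(3)$ I would first record that any closed graded trace kills the ``pure differential'' forms, since $df^1\cdots df^n=\partial^{n-1}(f^1df^2\cdots df^n)$; hence $\mathscr{T}^H$ is determined on $Hom^n_{\Omega(\mathcal{D}_H)}(X,X)$ by its values $\phi$ on the $f^0df^1\cdots df^n$. To obtain the cocycle criterion \eqref{3.3y}, I would expand the composite $(f^0df^1\cdots df^n)\circ f^{n+1}$ using \eqref{compoDG}: up to the sign $(-1)^n$ this composite equals the alternating sum $\sum_{j=0}^{n}(-1)^j[\,f^jf^{j+1}\text{ merged}\,]$, whose $\mathscr{T}^H$-values are the first $n+1$ faces $\delta_j\phi$. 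Applying \eqref{gt2} to the composite (with $g=f^{n+1}$ of degree $0$ and $g'=f^0df^1\cdots df^n$) reproduces precisely the wrap-around face $\delta_{n+1}\phi$, and comparing the two expressions yields $b\phi=0$. Cyclicity is the same computation one degree lower: applying \eqref{gt2} with $g=df^n$ and $g'=f^0df^1\cdots df^{n-1}$, rewriting $S^{-1}(m_{(-1)})df^n=d(S^{-1}(m_{(-1)})f^n)$, using the Leibniz rule \eqref{symb}, and discarding the resulting exact term via \eqref{gt1}, gives $\phi=(-1)^n\tau_n\phi=\lambda\phi$, i.e.\ $(1-\lambda)\phi=0$.

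The remaining implication $(3)\Rightarrow(2)$ is the heart of the argument and the main obstacle. Given $\phi\in Z^n_H(\mathcal{D}_H,M)$ I would define $\mathscr{T}^H$ on $\Omega(\mathcal{D}_H)$ by $\mathscr{T}^H_X(m\otimes f^0df^1\cdots df^n):=\phi(m\otimes f^0\otimes\cdots\otimes f^n)$ and $\mathscr{T}^H_X(m\otimes df^1\cdots df^n):=0$. Then \eqref{gt0} is immediate from the $H$-linearity \eqref{new3.2} of $\phi$ together with the action \eqref{comp4.8}, and \eqref{gt1} holds by construction since $\partial^{n-1}$ lands in pure differentials. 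The difficult point is the graded trace property \eqref{gt2}, which must now be verified for \emph{arbitrary} homogeneous $g',g$ rather than the special forms used above. Here I would reduce to generators by bilinearity and induct on degree, expanding both $g'g$ and $(S^{-1}(m_{(-1)})g)g'$ via \eqref{compoDG} and the Leibniz rule \eqref{symb}, discarding exact (pure-differential) summands, and matching the survivors using both $b\phi=0$ and $\lambda\phi=\phi$ from \eqref{3.3y}. The genuinely delicate part, beyond Connes' unparametrized case, is propagating the SAYD coaction and antipode factors $S^{-1}(m_{(-1)})$ and the Koszul signs $(-1)^{ij}$ through these manipulations; the coaction identities needed are exactly those already in Proposition \ref{prop2.3} and in \eqref{2obsv}, so the bookkeeping, though lengthy, is under control.
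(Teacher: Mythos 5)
Your proposal is correct and follows essentially the same route as the paper: the universal property of $\Omega(\mathcal{D}_H)$ handles $(1)\Leftrightarrow(2)$ (pulling the trace back along $\hat\rho$, exactly as in the paper's $(1)\Rightarrow(2)$), the expansion of $(f^0df^1\cdots df^n)\circ f^{n+1}$ via the Leibniz rule together with \eqref{gt1}--\eqref{gt2} gives the cocycle conditions, and for $(3)\Rightarrow(2)$ you define $\mathscr{T}^H$ by $\phi$ and verify \eqref{gt2} by expanding both sides with \eqref{compoDG} and matching terms using $b\phi=0$ and $\lambda\phi=\phi$, which is precisely the paper's (lengthy but routine) computation. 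The only difference is organizational --- you prove $(2)\Rightarrow(3)$ on $\Omega(\mathcal{D}_H)$ where the paper proves $(1)\Rightarrow(3)$ on a general cycle --- and that the delicate SAYD/sign bookkeeping in $(3)\Rightarrow(2)$ is described rather than executed, but the identified ingredients are exactly the ones the paper uses.
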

\begin{proof}
(1) $\Rightarrow$ (2). By the universal property of $\Omega(\mathcal{D}_H)$, the $H$-linear semifunctor $\rho:\mathcal{D}_H \longrightarrow \mathcal{S}_H^0$ can be extended to a DGH-semifunctor $\hat{\rho}:\Omega(\mathcal{D}_H) \longrightarrow \mathcal{S}_H$ as in \eqref{cat1}.  We define a collection $\mathscr{T}^H:=\{\mathscr{T}^H_X:M \otimes Hom^n_{\Omega(\mathcal{D}_H)}(X,X) \longrightarrow k\}_{X \in Ob(\Omega(\mathcal{D}_H))}$  of $k$-linear maps  given by
\begin{equation}\label{ver1x}\mathscr{T}_X^H(m \otimes (f^0+\mu)df^1 \ldots df^n):=\hat{\mathscr T}^H_{X}\big(m \otimes \hat\rho((f^0+\mu)df^1 \ldots df^n)\big)
\end{equation} for any $m \in M$ and $f^0 \otimes \ldots \otimes f^n \in Hom_{\mathcal{D}_H}(X_1,X) \otimes Hom_{\mathcal{D}_H}(X_2,X_1) \otimes \ldots \otimes Hom_{\mathcal{D}_H}(X,X_n)$. In particular, it follows from \eqref{ver1x} that
\begin{equation}
\phi(m \otimes f^0 \otimes \ldots \otimes f^n)=\hat{\mathscr T}^H_{X}\big(m \otimes \rho(f^0)\hat{\partial}_H^0\left(\rho(f^1)\right) \ldots \hat{\partial}_H^0\left(\rho(f^n)\right)\big)=\mathscr{T}_X^H(m \otimes f^0df^1 \ldots df^n)
\end{equation} 
 We now verify that the collection $\mathscr{T}^H$ is an $n$-dimensional closed graded $(H,M)$-trace on $\Omega(\mathcal{D}_H)$. For any $\alpha=(f^0+\mu)df^1 \ldots df^n \in Hom^n_{\Omega(\mathcal{D}_H)}(X,X)$ and $h \in H$, we have
 \begin{equation*}
 \mathscr{T}_X^H\big(mh_1 \otimes S(h_2)\alpha\big)=\hat{\mathscr T}^H_{X}(mh_1\otimes \hat\rho(S(h_2)\alpha))=\hat{\mathscr T}^H_{X}(mh_1\otimes (S(h_2)(\hat\rho(\alpha)))=\varepsilon(h)\mathscr{T}_X^H(m\otimes \alpha)
 \end{equation*}
Hence, $\mathscr T^H$ satisfies the condition \eqref{gt0}.
Further, for any $\beta=(p^0+\mu)dp^1 \ldots dp^{n-1} \in Hom^{n-1}_{\Omega(\mathcal{D}_H)}(X,X)$, we have
\begin{align*}
\mathscr{T}_X^H\big(m \otimes \partial^{n-1}_{H}(\beta))&=\hat{\mathscr{T}}_X^H(m \otimes \hat\rho(\partial^{n-1}_{H}(\beta)))=\hat{\mathscr{T}}_X^H(m \otimes \hat{\partial}^{n-1}_{H}(\hat\rho(\beta)))=0
\end{align*}
Hence,  $\mathscr T^H$ satisfies the condition \eqref{gt1}. Finally, we see that
\begin{align*}
\mathscr{T}_X^{H}(m \otimes g'g)&= \hat{\mathscr T}^H_X\left(m \otimes \hat{\rho}(g')\hat{\rho}(g)\right)=(-1)^{ij} \hat{\mathscr T}_Y^{H}\big(m_{(0)} \otimes \left(S^{-1}(m_{(-1)})\hat{\rho}(g)\right)\hat{\rho}(g')\big)\\
&=(-1)^{ij}~\mathscr{T}_Y^{H}\big(m_{(0)} \otimes \left(S^{-1}(m_{(-1)})g\right)g'\big)
\end{align*}
for any $g \in Hom^i_{\Omega(\mathcal{D}_H)}(X,Y)$, $g' \in Hom^j_{\Omega(\mathcal{D}_H)}(Y,X)$ with $i+j=n$. This proves the condition in \eqref{gt2}.

\smallskip
(2) $\Rightarrow$ (1). Suppose that we have a closed graded $(H,M)$-trace $\mathscr{T}^H$ of dimension $n$ on $\Omega(\mathcal{D}_H)$ satisfying \eqref{eq2}.  Then, the triple $(\Omega(\mathcal D_H), \partial_H, \mathscr{T}^H)$ forms an $n$-dimensional cycle over $\mathcal D_H$ with coefficients in $M$. Further, by observing that $\partial^0_{H}(f)=df$ for any $f \in Hom_{\mathcal{D}_H}(X,Y)$, we get \eqref{eq1}.

\smallskip
(1) $\Rightarrow$ (3). Let $(\mathcal{S}_H,\hat{\partial}_H, \hat{\mathscr T}^H)$ be an $n$-dimensional cycle over $\mathcal D_H$ with coefficients in $M$ and $\rho:\mathcal{D}_H \longrightarrow \mathcal{S}^0_H$ be an $H$-linear semifunctor satisfying 
$$\phi(m \otimes f^0 \otimes \ldots \otimes f^n)=\hat{\mathscr T}^H_{X_0}\big(m \otimes \rho(f^0)\hat{\partial}_H^0\left(\rho(f^1)\right) \ldots \hat{\partial}_H^0\left(\rho(f^n)\right)\big)$$
for any $m \in M$ and $f^0 \otimes \ldots \otimes f^n \in Hom_{\mathcal{D}_H}(X_1,X_0) \otimes Hom_{\mathcal{D}_H}(X_2,X_1) \otimes \ldots \otimes Hom_{\mathcal{D}_H}(X_0,X_n)$.
For simplicity of  notation, we will drop the functor $\rho$.
To show that $\phi$ is an $n$-cocycle, it suffices to check that (see \eqref{3.3y})
$$b(\phi)=0\quad \text{and} \quad (1-\lambda)(\phi)=0$$ where $b=\sum\limits_{i=0}^{n+1}(-1)^i\delta_i$ and $\lambda=(-1)^n\tau_n$. For any $p^0 \otimes \ldots \otimes p^{n+1} \in Hom_{\mathcal{D}_H}(X_1,X_0) \otimes Hom_{\mathcal{D}_H}(X_2,X_1) \otimes \ldots \otimes Hom_{\mathcal{D}_H}(X_0,X_{n+1})$, we have

\begin{align*}
&\sum\limits_{i=0}^{n+1}(-1)^i\delta_i(\phi)(m \otimes p^0 \otimes \ldots \otimes p^{n+1})\\
&= \sum\limits_{i=0}^{n}(-1)^i\phi(m \otimes p^0 \otimes \ldots \otimes p^ip^{i+1} \otimes \ldots \otimes p^{n+1})~ + (-1)^{n+1}\phi\big(m_{(0)} \otimes \big(S^{-1}(m_{(-1)})p^{n+1}\big)p^0 \otimes p^1 \otimes \ldots \otimes p^{n}\big)\\
&= \hat{\mathscr T}^H_{X_0}\big(m \otimes p^0p^1\hat{\partial}_H^0(p^2) \ldots \hat{\partial}_H^0(p^{n+1})\big) ~+ \sum\limits_{i=1}^{n}(-1)^i \hat{\mathscr T}^H_{X_0}\big(m \otimes p^0\hat{\partial}_H^0(p^1) \ldots \hat{\partial}_H^0(p^ip^{i+1})\ldots \hat{\partial}_H^0(p^{n+1})\big)~ +\\
& \quad (-1)^{n+1}\hat{\mathscr T}^H_{X_{n+1}}\big(m_{(0)} \otimes \big(S^{-1}(m_{(-1)})p^{n+1}\big)p^0 \hat{\partial}_H^0(p^1)\ldots \otimes \hat{\partial}_H^0(p^n)\big)
\end{align*}
Now using the equality $\hat{\partial}_H^0(fg)=\hat{\partial}_H^0(f)g+f\hat{\partial}_H^0(g)$ for any $f$ and $g$ of degree $0$, we have
 \begin{align*}
&\big(p^0\hat{\partial}_H^0(p^1) \ldots \hat{\partial}_H^0(p^n)\big)p^{n+1}\\
&=\sum\limits_{i=1}^n (-1)^{n-i} p^0\hat{\partial}_H^0(p^1) \ldots \hat{\partial}_H^0(p^ip^{i+1}) \ldots \hat{\partial}_H^0(p^{n+1}) + (-1)^n p^0p^1\hat{\partial}_H^0(p^2) \ldots \hat{\partial}_H^0(p^{n+1})
\end{align*}
Thus, using the condition in \eqref{gt2},  we obtain
\begin{align*}
&\sum\limits_{i=0}^{n+1}(-1)^i\delta_i(\phi)(m \otimes p^0 \otimes \ldots \otimes p^{n+1})\\
&= (-1)^n \hat{\mathscr T}^H_{X_0}\big(m \otimes \big( p^0\hat{\partial}_H^0(p^1) \ldots \hat{\partial}_H^0(p^n)\big)p^{n+1}\big) + (-1)^{n+1}\hat{\mathscr T}^H_{X_{n+1}}\big(m_{(0)} \otimes \big(S^{-1}(m_{(-1)})p^{n+1}\big)p^0 \hat{\partial}_H^0(p^1)\ldots  \hat{\partial}_H^0(p^n)\big)=0
\end{align*}

Next, using \eqref{gt1}, \eqref{gt2}, and the $H$-linearity of $\hat{\partial}_H$,  we have
\begin{align*}
&\big(\left(1-(-1)^n\tau_n\right)\phi\big)(m \otimes f^0 \otimes \ldots \otimes f^{n})\\
&=\phi(m \otimes f^0 \otimes \ldots \otimes f^{n})- (-1)^n\phi\big(m_{(0)} \otimes S^{-1}(m_{(-1)})f^n \otimes f^0 \otimes \ldots \otimes f^{n-1}\big)\\
&=\hat{\mathscr T}^H_{X_0}(m \otimes f^0\hat{\partial}_H^0(f^1) \ldots \hat{\partial}_H^0(f^n))-(-1)^n \hat{\mathscr T}^H_{X_n}\big(m_{(0)} \otimes \big(S^{-1}(m_{(-1)})f^n\big)\hat{\partial}_H^0(f^0)\hat{\partial}_H^0(f^1) \ldots \hat{\partial}_H^0(f^{n-1})\big)\\
&= (-1)^{n-1} \hat{\mathscr T}^H_{X_n}\big (m_{(0)} \otimes \big(S^{-1}(m_{(-1)})\hat{\partial}_H^0(f^n)\big)f^0\hat{\partial}_H^0(f^1) \ldots \hat{\partial}^0_H(f^{n-1}) \big)+\\
& \quad (-1)^{n-1} \hat{\mathscr T}^H_{X_n}\big(m_{(0)} \otimes \big(S^{-1}(m_{(-1)})f^n\big)\hat{\partial}^0_H(f^0) \hat{\partial}_H^0(f^1)\ldots \hat{\partial}^0_H(f^{n-1})\big)\\
&= (-1)^{n-1} \hat{\mathscr T}^H_{X_n}\big(m_{(0)} \otimes \hat{\partial}_H^{n-1}\big((S^{-1}(m_{(-1)})f^n)f^0\hat{\partial}_H^0(f^1) \ldots \hat{\partial}_H^0(f^{n-1})\big)\big)=0
\end{align*}

(3) $\Rightarrow$ (2). Let $\phi \in Z^n_H(\mathcal{D}_H,M)$. For each $X \in Ob(\Omega(\mathcal{D}_H))$, we define an $H$-linear map
$\mathscr{T}^H_X:M \otimes Hom^n_{\Omega(\mathcal{D}_H)}(X,X) \longrightarrow k$ given by
$$\mathscr{T}^H_X(m \otimes (f^0+\mu)df^1\ldots df^{n}):= \phi(m \otimes f^0 \otimes \ldots \otimes f^{n})$$
for $f^0 \otimes \ldots \otimes f^{n} \in Hom_{\mathcal{D}_H}(X_1,X) \otimes Hom_{\mathcal{D}_H}(X_2,X_1) \otimes \ldots \otimes Hom_{\mathcal{D}_H}(X,X_{n})$. We now verify that the collection $\{\mathscr{T}^n_X:M \otimes Hom^n_{\Omega(\mathcal{D}_H)}(X,X) \longrightarrow k\}_{X \in Ob(\Omega(\mathcal{D}_H))}$ is a closed graded $(H,M)$-trace on $(\Omega(\mathcal{D}_H),\partial_H)$. For any $(p^0+\mu)dp^1\ldots dp^{n-1} \in Hom^{n-1}_{\Omega(\mathcal{D}_H)}(X,X)$, we have
\begin{align*}
\mathscr{T}_X^{H}\big(m \otimes \partial_H^{n-1}((p^0+\mu)dp^1\ldots dp^{n-1})\big)&=\mathscr{T}_X^{H}\big(m \otimes 1dp^0dp^1\ldots dp^{n-1}\big)=\phi(m \otimes 0 \otimes p^0 \otimes \ldots \otimes p^{n-1})=0
\end{align*}
This proves the condition in \eqref{gt1}. Using \eqref{new3.2}, it is also clear that $\{\mathscr{T}^n_X:M \otimes Hom^n_{\Omega(\mathcal{D}_H)}(X,X) \longrightarrow k\}_{X \in Ob(\Omega(\mathcal{D}_H))}$ satisfies condition \eqref{gt0}. Finally, for any $g'=(g^0+\mu')dg^1\ldots dg^{r} \in Hom^r_{\Omega(\mathcal{D}_H)}(Y,X)$ and $g=(g^{r+1}+\mu)dg^{r+2}\ldots dg^{n+1} \in Hom^{n-r}_{\Omega(\mathcal{D}_H)}(X,Y)$, we have
\begin{align*}
&\mathscr{T}_X^{H}\big(m \otimes g'g\big)\\&=\sum\limits_{j=1}^r (-1)^{r-j}~ \mathscr{T}_X^{H}\big(m \otimes (g^0+\mu')dg^1 \ldots d(g^jg^{j+1}) \ldots dg^{n+1}\big) + (-1)^r~ \mathscr{T}_X^{H}\big(m \otimes (g^0+\mu')g^1dg^2 \ldots dg^{n+1}\big)\\
&\textrm{ }+\mathscr{T}_X^{H}\big(m \otimes \mu(g^0+\mu')dg^1 \ldots dg^rdg^{r+2}\ldots dg^{n+1}\big) \\
&= \sum\limits_{j=1}^r (-1)^{r-j} \phi(m \otimes g^0 \otimes \ldots \otimes g^jg^{j+1} \otimes \ldots \otimes g^{n+1}) + (-1)^r~ \phi(m \otimes g^0g^1 \otimes g^2 \otimes \ldots \otimes g^{n+1})\\
& \textrm{ }+ (-1)^r~ \mu'\phi(m \otimes g^1 \otimes g^2 \otimes \ldots \otimes g^{n+1})+\mu \phi(m \otimes g^0\otimes g^1 \otimes...\otimes g^r\otimes g^{r+2} \otimes \ldots \otimes g^{n+1})\\
&= \sum\limits_{j=0}^r (-1)^{r+j} \phi(m \otimes g^0 \otimes \ldots \otimes g^jg^{j+1} \otimes \ldots \otimes g^{n+1})+ (-1)^r~ \mu'\phi(m \otimes g^1 \otimes g^2 \otimes \ldots \otimes g^{n+1})\\ &\textrm{ }+\mu \phi(m \otimes g^0\otimes g^1 \otimes...\otimes g^r\otimes g^{r+2} \otimes \ldots \otimes g^{n+1})
\end{align*}
On the other hand, we have

\begin{equation*}
\begin{array}{ll}
&(-1)^{r(n-r)}~\mathscr{T}_Y^{H}\Big(m_{(0)} \otimes \big(S^{-1}(m_{(-1)})g\big)g'\Big)\\
&=(-1)^{r(n-r)}~\mathscr{T}_Y^{H}\Big(m_{(0)} \otimes \left([S^{-1}\left((m_{(-1)})_{n-r+1}\right)(g^{r+1}+\mu)][d\left(S^{-1}\left((m_{(-1)})_{n-r}\right)g^{r+2}\right)] \ldots [d\left(S^{-1}\left((m_{(-1)})_{1}\right)g^{n+1}\right)]\right)\circ\\
&\qquad((g^0+\mu')dg^1\ldots dg^{r}) \Big)\\
&=(-1)^{r(n-r)} \sum\limits_{j=r+2}^{n} (-1)^{n-j+1} ~\mathscr{T}_Y^{H}\Big(m_{(0)} \otimes [S^{-1}\left((m_{(-1)})_{n-r}\right)(g^{r+1}+\mu)]\ldots d\big[\big(S^{-1}((m_{(-1)})_{n-j+1})(g^{j}g^{j+1})\big]\ldots dg^r)+\\
&\quad (-1)^{r(n-r)}~\mathscr{T}_Y^{H}\Big(m_{(0)} \otimes [S^{-1}\left((m_{(-1)})_{n-r+1}\right)(g^{r+1}+\mu)]\ldots d[\big(S^{-1}((m_{(-1)})_1)g^{n+1}\big)g^{0}] \ldots dg^r\Big)+\\
& \quad (-1)^{r(n-r)} (-1)^{n-r} \mathscr{T}_Y^{H}\Big(m_{(0)} \otimes \left([S^{-1}\left((m_{(-1)})_{n-r}\right)((g^{r+1}+\mu)g^{r+2})]\right) \ldots [d\left(S^{-1}\left((m_{(-1)})_{1}\right)g^{n+1}\right)]
(dg^0dg^1\ldots dg^{r}) \Big)\\
&\textrm{ }+(-1)^{r(n-r)}\mu'\mathscr{T}_Y^{H}\Big(m_{(0)} \otimes \left. [S^{-1}\left((m_{(-1)})_{n-r+1}\right)(g^{r+1}+\mu)][d\left(S^{-1}\left((m_{(-1)})_{n-r}\right)g^{r+2}\right)] \ldots [d\left(S^{-1}\left((m_{(-1)})_{1}\right)g^{n+1}\right)]\right.\\
&\qquad dg^1\ldots dg^{r} \Big)\\
&=(-1)^{r(n-r)} \sum\limits_{j=r+2}^{n}(-1)^{n-j+1} ~\phi\Big(m_{(0)} \otimes S^{-1}\left((m_{(-1)})_{n-r}\right)g^{r+1} \otimes \ldots \otimes \big(S^{-1}((m_{(-1)})_{n-j+1})(g^{j}g^{j+1}) \otimes \ldots \otimes g^r)+\\
&\quad (-1)^{r(n-r)}~\phi\Big(m_{(0)} \otimes S^{-1}\left((m_{(-1)})_{n-r+1}\right)g^{r+1} \otimes \ldots \otimes \big(S^{-1}((m_{(-1)})_1)g^{n+1}\big)g^{0} \otimes \ldots \otimes g^r\Big)+\\
& \quad (-1)^{r(n-r)} (-1)^{n-r} \phi\Big(m_{(0)} \otimes S^{-1}\left((m_{(-1)})_{n-r}\right)(g^{r+1}g^{r+2}) \otimes \ldots \otimes \left(S^{-1}\left((m_{(-1)})_{1}\right)g^{n+1}\right) \otimes  g^0 \otimes g^1 \otimes \ldots \otimes g^{r} \Big)\\
& \quad (-1)^{r(n-r)} (-1)^{n-r} \mu \phi\Big(m_{(0)} \otimes  S^{-1}\left((m_{(-1)})_{n-r}\right)g^{r+2} \otimes \ldots \otimes \left(S^{-1}\left((m_{(-1)})_{1}\right)g^{n+1}\right) \otimes  g^0 \otimes g^1 \otimes \ldots \otimes g^{r} \Big)\\
&\textrm{ }+(-1)^{r(n-r)}\mu'\phi\Big(m_{(0)} \otimes \left. S^{-1}\left((m_{(-1)})_{n-r+1}\right)g^{r+1}\otimes S^{-1}\left((m_{(-1)})_{n-r}\right)g^{r+2}\otimes \ldots \otimes (S^{-1}\left((m_{(-1)})_{1}\right)g^{n+1}\otimes \right. g^1\otimes \ldots \otimes g^{r} \Big) \\
\end{array}
\end{equation*}

Using repeatedly the fact that  $\phi=(-1)^n\tau_n\phi$, we get

\begin{align*}
&(-1)^{r(n-r)}~\mathscr T_Y^{n}\Big(m_{(0)} \otimes \big(S^{-1}(m_{(-1)})g\big)g'\Big)\\
&=-\sum\limits_{j=r+1}^{n} (-1)^{r+j} \phi(m \otimes g^0 \otimes \ldots \otimes g^jg^{j+1} \otimes \ldots \otimes g^{n+1})-(-1)^{n+r+1}\phi\Big(m_{(0)}  \otimes \big(S^{-1}(m_{(-1)})g^{n+1}\big)g^0 \otimes g^1 \otimes \ldots \otimes g^n\Big)\\
&\textrm{ } + (-1)^r~ \mu'\phi(m \otimes g^1 \otimes g^2 \otimes \ldots \otimes g^{n+1}) +\mu \phi(m \otimes g^0\otimes g^1 \otimes...\otimes g^r\otimes g^{r+2} \otimes \ldots \otimes g^{n+1})
\end{align*}

\smallskip
The condition \eqref{gt2} now follows using the fact that $b(\phi)=0$. This proves the result.
\end{proof}

\begin{remark}\label{rem4.11}
From the statement and proof of Theorem \ref{charcycl}, it is clear that there is a one to one correspondence between $n$-dimensional closed graded $(H,M)$-traces on $\Omega(\mathcal{D}_H)$ and $Z^n_H(\mathcal{D}_H,M)$.
\end{remark}

Substituting $H=k=M$ in Theorem \ref{charcycl}, we obtain the following categorical version of \cite[Proposition 1, p. 98]{C2}. This gives a characterization of $Z^n(\mathcal{C})$, the $n$-cocyles in the ordinary cyclic cohomology of a small $k$-linear category $\mathcal{C}$.

\begin{prop}\label{ordcharcycl}
Let $\mathcal{C}$ be a small $k$-linear category and $\phi \in CN^n(\mathcal{C})$. Then, the following conditions are equivalent:

\begin{itemize}
\item[(1)] $\phi$ is the character of an $n$-dimensional cycle over $\mathcal C$, i.e., there exists an $n$-dimensional cycle $(\mathcal{S},\hat{\partial}, \hat{T})$  and a $k$-linear semifunctor $\rho:\mathcal{C} \longrightarrow \mathcal{S}^0$ such that
\begin{equation}\label{eq1xx}
\begin{array}{ll}
\phi(f^0 \otimes \ldots \otimes f^n)&
=\hat{T}_{X_0}(\hat{\rho}(f^0df^1\ldots df^n))\vspace{0.02in}\\ 
&=\hat{T}_{X_0}\big(\rho(f^0)\hat{\partial}^0\left(\rho(f^1)\right) \ldots \hat{\partial}^0\left(\rho(f^n)\right)\big)\\
\end{array}
\end{equation}
for any $f^0 \otimes \ldots \otimes f^n \in Hom_{\mathcal{C}}(X_1,X_0) \otimes Hom_{\mathcal{C}}(X_2,X_1) \otimes \ldots \otimes Hom_{\mathcal{C}}(X_0,X_n)$.
\item[(2)] There exists a closed graded trace ${T}$ of dimension $n$ on $\left(\Omega \mathcal{C},\partial\right)$ such that
\begin{equation}\label{eq2xx}
\phi( f^0 \otimes \ldots \otimes f^n)={T}_{X_0}( f^0df^1 \ldots df^n)
\end{equation}
for any  $f^0 \otimes \ldots \otimes f^n \in Hom_{\mathcal{C}}(X_1,X_0) \otimes Hom_{\mathcal{C}}(X_2,X_1) \otimes \ldots \otimes Hom_{\mathcal{C}}(X_0,X_n)$.
\item[(3)] $\phi \in Z^n(\mathcal{C})$. 
\end{itemize}
\end{prop}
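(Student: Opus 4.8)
The plan is to obtain this statement as the special case $H=k=M$ of Theorem~\ref{charcycl}, rather than repeating the cocycle computations. As noted in the remark following Proposition~\ref{prop2.3}, the base field $k$ is a Hopf algebra with $\Delta(1)=1\otimes 1$ and $S(1)=\varepsilon(1)=1$, and $k$ is a SAYD module over itself with coaction $\rho(\alpha)=1\otimes\alpha$; under this substitution the cocyclic module $C^\bullet_H(\mathcal{D}_H,M)$ reduces to the ordinary cyclic complex $CN^\bullet(\mathcal{C})$, so that $Z^n_H(\mathcal{D}_H,M)$ becomes $Z^n(\mathcal{C})$ and condition~(3) here coincides with condition~(3) of Theorem~\ref{charcycl}.

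First I would confirm that each auxiliary structure degenerates as expected. A left $H$-category with $H=k$ is simply a small $k$-linear category, a left DGH-semicategory is simply a DG-semicategory, and, as already recorded in Definition~\ref{gradedtrace}, a closed graded $(H,M)$-trace with $H=k=M$ is exactly a closed graded trace in the sense of \eqref{gh1}--\eqref{gh2}. The essential simplification is that the coaction is trivial: for $m=\alpha\in k$ one has $m_{(-1)}=1$ and $m_{(0)}=m$, so $S^{-1}(m_{(-1)})=1$ acts as the identity and every factor $S^{-1}(m_{(-1)})(-)$ occurring in the face maps, the cyclic operator, and the trace condition \eqref{gt2} disappears. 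With this, the character formula \eqref{eq1} collapses to \eqref{eq1xx} and \eqref{eq2} collapses to \eqref{eq2xx}.

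Having matched the data, the three implications follow directly from their counterparts in Theorem~\ref{charcycl}. In particular, $(1)\Rightarrow(2)$ transports the trace along the universal property of $\Omega\mathcal{C}$ from Proposition~\ref{construniv}; $(2)\Rightarrow(1)$ takes $(\Omega\mathcal{C},\partial,T)$ itself as the witnessing cycle; and the equivalence with $(3)$ reduces, via \eqref{3.3y}, to checking $b(\phi)=0$ and $(1-\lambda)(\phi)=0$ using only \eqref{gh1}, \eqref{gh2}, and the Leibniz rule $\hat\partial^0(fg)=\hat\partial^0(f)g+f\hat\partial^0(g)$ for degree-zero morphisms. I do not expect a genuine obstacle, since no new idea is needed beyond the substitution; the only point deserving care is the bookkeeping that confirms the trivial coaction really removes every $m_{(-1)}$ term, so that the general-coefficient argument specializes cleanly to the untwisted one.
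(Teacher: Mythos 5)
Your proposal is correct and matches the paper exactly: the paper derives Proposition~\ref{ordcharcycl} precisely by substituting $H=k=M$ in Theorem~\ref{charcycl}, with no separate argument given. Your additional bookkeeping (trivial coaction killing the $S^{-1}(m_{(-1)})$ factors, degeneration of the DGH and trace structures) is exactly the verification implicit in that substitution.
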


\begin{remark}\label{rem4.12}
By Proposition \ref{ordcharcycl}, it is clear that there is a one to one correspondence between $n$-dimensional closed graded traces on $\Omega\mathcal{C}$ and $Z^n(\mathcal{C})$.
\end{remark}

\section{Linearization by matrices and Hopf-cyclic cohomology}\label{Morita}
We continue with $M$ being a right-left SAYD module over $H$. In Section \ref{SectionDG}, we described the spaces $Z^\bullet_H(\mathcal{D}_H,M)$ and $Z^\bullet(\mathcal{D}_H)$. The next aim is to find a characterization of  $B^\bullet_H(\mathcal{D}_H,M)$ and $B^\bullet(\mathcal{D}_H)$ which will be done in several steps. For this, we will show in this section that the Hopf-cyclic cohomology of an $H$-category  $\mathcal D_H$ is the same as that of its linearization $\mathcal{D}_H \otimes M_r(k)$. We denote by $\overline{Cat}_H$ the category whose objects are left $H$-categories and whose morphisms are $H$-linear semifunctors.

\smallskip
Let $\mathcal{C}$ be a $k$-linear category and $r \in \mathbb{N}$. Then, its linearization $\mathcal{C} \otimes M_r(k)$ is the $k$-linear category defined as follows:
\begin{equation*}
\begin{array}{l}
Ob\left(\mathcal{C} \otimes M_r(k)\right):=Ob(\mathcal{C})\\
Hom_{\mathcal{C} \otimes M_r(k)}(X,Y):=Hom_\mathcal{C}(X,Y) \otimes M_r(k)
\end{array}
\end{equation*}
for any $X,Y \in Ob(\mathcal{C} \otimes M_r(k))$.

\begin{lemma}
Let $\mathcal{D}_H$ be a left $H$-category and let $r \in \mathbb{N}$. Then, $\mathcal{D}_H \otimes M_r(k)$ is also a left $H$-category.
\end{lemma}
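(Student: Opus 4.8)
The plan is to equip each morphism space $Hom_{\mathcal{D}_H}(X,Y)\otimes M_r(k)$ with the left $H$-action in which $H$ acts only on the first tensor factor, namely $h(f\otimes A):=(hf)\otimes A$ for $h\in H$, $f\in Hom_{\mathcal{D}_H}(X,Y)$ and $A\in M_r(k)$, while $M_r(k)$ is regarded as an $H$-trivial $k$-space. With this definition I would then verify in turn the three axioms of Definition \ref{defH-cat}. Recall that the composition in $\mathcal{D}_H\otimes M_r(k)$ is the evident one, $(g\otimes B)\circ(f\otimes A)=(g\circ f)\otimes(BA)$, where $BA$ denotes matrix multiplication, and that the identity morphism at an object $X$ is $id_X\otimes I_r$, with $I_r$ the $r\times r$ identity matrix.

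First, condition (i) is automatic: since $Hom_{\mathcal{D}_H}(X,Y)$ is a left $H$-module and $M_r(k)$ is a $k$-vector space, the tensor product $Hom_{\mathcal{D}_H}(X,Y)\otimes_k M_r(k)$ inherits a left $H$-module structure via the formula above. For condition (ii), I would compute
$$h(id_X\otimes I_r)=(h\,id_X)\otimes I_r=\varepsilon(h)\,id_X\otimes I_r=\varepsilon(h)(id_X\otimes I_r),$$
where the middle equality is precisely condition (ii) for the original $H$-category $\mathcal{D}_H$.

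The only computation that genuinely uses the Hopf structure is condition (iii), and this is where I would concentrate the verification. For $f\otimes A\in Hom_{\mathcal{D}_H}(X,Y)\otimes M_r(k)$ and $g\otimes B\in Hom_{\mathcal{D}_H}(Y,Z)\otimes M_r(k)$, I would expand both sides and compare:
\begin{align*}
h\big((g\otimes B)\circ(f\otimes A)\big)&=h\big((g\circ f)\otimes(BA)\big)=\big(h(g\circ f)\big)\otimes(BA)=\big((h_1g)(h_2f)\big)\otimes(BA),\\
\big(h_1(g\otimes B)\big)\circ\big(h_2(f\otimes A)\big)&=\big((h_1g)\otimes B\big)\circ\big((h_2f)\otimes A\big)=\big((h_1g)(h_2f)\big)\otimes(BA),
\end{align*}
the crucial equality $h(g\circ f)=(h_1g)(h_2f)$ being condition (iii) for $\mathcal{D}_H$. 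Since the two expressions agree, $\mathcal{D}_H\otimes M_r(k)$ satisfies all of Definition \ref{defH-cat}.

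The expected main obstacle is bookkeeping rather than a genuine difficulty: one must keep the matrix factors passive throughout, so that the coproduct of $H$ never acts on them, and respect the composition convention on both tensor slots simultaneously. The structural reason the argument succeeds is exactly that $M_r(k)$ carries the trivial $H$-action, which reduces the Hopf-compatibility in (iii) to the corresponding identity in $\mathcal{D}_H$ tensored with ordinary matrix multiplication.
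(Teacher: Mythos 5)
Your proposal is correct and uses exactly the same $H$-action $h(f\otimes B):=(hf)\otimes B$ as the paper, which states the verification is routine; you have simply written out the details of that routine check. No discrepancies.
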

\begin{proof}
This follows easily by defining the $H$-action on each $Hom_{\mathcal{D}_H \otimes M_r(k)}(X,Y)$ by setting $h(f \otimes B):=hf \otimes B$ for a morphism
$f\otimes B$ in $\mathcal D_H\otimes M_r(k)$.
\end{proof}

\begin{prop}\label{P6.2wp} Let $\mathcal{D}_H$ be a left $H$-category and let $M$ be a right-left SAYD module. Then:

\smallskip
(1) We obtain a para-cyclic module $C_\bullet(\mathcal{D}_H,M):=\{C_n(\mathcal{D}_H,M):=M \otimes CN_n(\mathcal{D}_H)\}_{n \geq 0}$ with the following structure maps  
\begin{align*}
d_i(m \otimes f^0 \otimes \ldots \otimes f^{n})&= \begin{cases}
m \otimes f^0 \otimes f^1 \otimes \ldots \otimes f^if^{i+1} \otimes \ldots \otimes f^{n} \quad~~~~~~~~~~~~~~~~ 0 \leq i \leq n-1\\
m_{(0)} \otimes \big(S^{-1}(m_{(-1)})f^n\big)f^0 \otimes f^1 \otimes \ldots \otimes f^{n-1} \quad~~~~~~~~~~~~i=n\\
\end{cases}\\
\vspace{.2cm}
s_i(m \otimes f^0 \otimes \ldots \otimes f^{n})&= \begin{cases} m \otimes f^0 \otimes f^1 \otimes \ldots f^i \otimes id_{X_{i+1}}\otimes f^{i+1} \otimes \ldots \otimes f^{n} \quad~~~~~ 0 \leq i \leq n-1\\
m \otimes f^0 \otimes f^1 \otimes \ldots \otimes f^{n} \otimes id_{X_{0}} \quad~~~~~~~~~~~~~~~~~~~~~~~~~~ i=n\\
\end{cases}\\ 
\vspace{.2cm}
t_n(m \otimes f^0 \otimes \ldots \otimes f^{n})&=m_{(0)} \otimes S^{-1}(m_{(-1)})f^n \otimes f^0 \otimes \ldots \otimes f^{n-1}
\end{align*}
for any $m \in M$ and $ f^0 \otimes f^1 \otimes \ldots \otimes f^n \in Hom_{\mathcal{D}_H}(X_1,X_0) \otimes Hom_{\mathcal{D}_H}(X_2,X_1) \otimes \ldots \otimes Hom_{\mathcal{D}_H}(X_0,X_n)$.

\smallskip
(2) By passing to the tensor product over $H$,   we obtain a cyclic module $C_\bullet^H(\mathcal{D}_H,M):=\{C_n^H(\mathcal{D}_H,M)=M \otimes_H CN_n(\mathcal{D}_H)\}_{n \geq 0}$. 
\end{prop}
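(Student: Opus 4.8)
The plan is to deduce both parts from Proposition \ref{prop2.3} by $k$-linear duality, exploiting that over a field the functor $Hom_k(\blank,k)$ is faithful: for any $k$-spaces $V,W$ and $f,g\colon V\to W$, if $f^*=g^*$ then $f=g$, since $W^*$ separates the points of $W$. The first observation is that the structure maps $d_i,s_i,t_n$ defined here on $C_n(\mathcal D_H,M)=M\otimes CN_n(\mathcal D_H)$ are exactly the transposes of the maps $\delta_i,\sigma_i,\tau_n$ of Proposition \ref{prop2.3}. Comparing the two sets of formulas term by term yields $(\delta_i\phi)=\phi\circ d_i$, $(\sigma_i\psi)=\psi\circ s_i$ and $(\tau_n\varphi)=\varphi\circ t_n$, so that $\delta_i=Hom_k(d_i,k)$, $\sigma_i=Hom_k(s_i,k)$ and $\tau_n=Hom_k(t_n,k)$.

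For part (1), each para-cyclic relation among $d_i,s_i,t_n$ is the exact transpose of a para-cocyclic relation among $\delta_i,\sigma_i,\tau_n$. Using $(fg)^*=g^*f^*$, the identity $d_it_n=t_{n-1}d_{i-1}$ transposes to $t_n^*d_i^*=d_{i-1}^*t_{n-1}^*$, i.e.\ $\tau_n\delta_i=\delta_{i-1}\tau_{n-1}$; likewise $d_0t_n=d_n$, $s_it_n=t_{n+1}s_{i-1}$, $s_0t_n=t_{n+1}^2s_n$ together with the simplicial relations transpose respectively to $\tau_n\delta_0=\delta_n$, $\tau_n\sigma_i=\sigma_{i-1}\tau_{n+1}$, $\tau_n\sigma_0=\sigma_n\tau_{n+1}^2$ and the cosimplicial relations. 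All of the latter were verified in Proposition \ref{prop2.3}(1), so faithfulness of $Hom_k(\blank,k)$ forces the former, which is precisely the assertion that $C_\bullet(\mathcal D_H,M)$ is a para-cyclic module (the relation $t_n^{n+1}=\mathrm{id}$ is not claimed, matching the ``para'' in Proposition \ref{prop2.3}(1)). Alternatively one may verify these relations directly; the computations are the term-by-term mirror images of those in Proposition \ref{prop2.3}(1).

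For part (2), the first step is to identify $C_n^H(\mathcal D_H,M)=M\otimes_H CN_n(\mathcal D_H)$ with the coinvariants $\big(M\otimes CN_n(\mathcal D_H)\big)_H$ for the right $H$-action $(m\otimes c)h=mh_1\otimes S(h_2)c$ introduced before Proposition \ref{prop2.3}. Indeed, modulo $mh\otimes c=m\otimes hc$ one has $mh_1\otimes S(h_2)c=m\otimes h_1S(h_2)c=\varepsilon(h)\,m\otimes c$, so the coinvariance relations hold in $M\otimes_H CN_n$; conversely, applying the coinvariance relation with $k=h_1$ and $d=h_2c$ gives $mh_1\otimes S(h_2)h_3c=mh\otimes c$ on the left and $m\otimes hc$ on the right, so $mh\otimes c=m\otimes hc$ holds in the coinvariants. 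Hence the two relation subspaces coincide and $C_n^H=\big(M\otimes CN_n\big)_H$, whence $C^n_H(\mathcal D_H,M)=Hom_H(M\otimes CN_n,k)=Hom_k(C_n^H,k)=(C_n^H)^*$.

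The final step dualizes Proposition \ref{prop2.3}(2). The content of that part is that $\delta_i,\sigma_i,\tau_n$ carry the subspace $(C_\bullet^H)^*\subseteq C^\bullet$ into itself. Since the annihilator of $(C_{n-1}^H)^*$ inside $M\otimes CN_{n-1}$ is exactly the relation subspace $R_{n-1}$ defining $C_{n-1}^H$, the condition that $\phi\circ d_i$ vanish on $R_n$ for every $\phi$ vanishing on $R_{n-1}$ is equivalent to $d_i(R_n)\subseteq R_{n-1}$, i.e.\ to $d_i$ descending to $C_n^H\to C_{n-1}^H$; the same reasoning applies to $s_i$ and $t_n$. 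Finally, the identity $\tau_n^{n+1}=\mathrm{id}$ proved on $(C_n^H)^*$ transposes, again by faithfulness, to $t_n^{n+1}=\mathrm{id}$ on $C_n^H$, so that together with the para-cyclic relations of part (1) the family $C_\bullet^H(\mathcal D_H,M)$ is a genuine cyclic module. I expect the only real obstacle to be the descent step: one must check that the coaction-and-antipode terms appearing in $d_n$ and $t_n$ respect the $H$-balancing of $\otimes_H$, and it is precisely here that the SAYD and stability conditions enter — exactly as in the well-definedness and $\tau_n^{n+1}=\mathrm{id}$ computations of Proposition \ref{prop2.3}(2), which this argument reuses by duality.
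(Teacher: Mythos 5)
Your argument is correct, but it reaches the conclusion by a genuinely different route from the paper's. The paper proves (1) by redoing the computations of Proposition \ref{prop2.3}(1) on the homology side, and proves (2) by direct calculation in $M\otimes_H CN_\bullet(\mathcal D_H)$: it checks by hand, using the SAYD condition \eqref{SAYDcondi}, that $t_n(mh\otimes_H f^0\otimes\cdots\otimes f^n)=t_n(m\otimes_H h(f^0\otimes\cdots\otimes f^n))$, notes that the faces and degeneracies are likewise well defined, and then verifies $t_n^{n+1}=\mathrm{id}$ directly from stability via \eqref{2obsv}. You instead extract everything from Proposition \ref{prop2.3} by linear duality: the observation that $\delta_i$, $\sigma_i$, $\tau_n$ are literally $Hom_k(d_i,k)$, $Hom_k(s_i,k)$, $Hom_k(t_n,k)$, combined with the faithfulness of $Hom_k(-,k)$ over a field, turns the para-cocyclic identities of Proposition \ref{prop2.3}(1) into the para-cyclic identities of part (1); and your identification $M\otimes_H CN_n(\mathcal D_H)\cong \big(M\otimes CN_n(\mathcal D_H)\big)_H$ (both inclusions of relation subspaces are verified correctly) together with the double-annihilator identity $(R^{\perp})^{\perp}=R$ converts the well-definedness statements of Proposition \ref{prop2.3}(2) into the descent of $d_i,s_i,t_n$ to $C^H_\bullet(\mathcal D_H,M)$, and $\tau_n^{n+1}=\mathrm{id}$ into $t_n^{n+1}=\mathrm{id}$. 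What your approach buys is economy and the absence of any new Hopf-algebraic computation, since every use of \eqref{SAYDcondi} and of stability is inherited from Proposition \ref{prop2.3}; what it costs is the bookkeeping of the coinvariants identification and the annihilator argument, neither of which appears in the paper, whereas the paper's direct computation is self-contained and makes the role of the SAYD and stability conditions visible at the point of use. Both proofs are complete and correct.
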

\begin{proof}
The proof of (1) follows as in Proposition \ref{prop2.3} (1). To prove (2), we first verify that the cyclic operator $t_n$ is well-defined. For any $m \otimes_H f^0 \otimes \ldots \otimes f^n \in C_n^H(\mathcal{D}_H,M)$, we have
\begin{equation*}
\begin{array}{lll}
t_n\left(mh \otimes_H f^0 \otimes \ldots \otimes f^{n}\right)&=(mh)_{(0)} \otimes_H S^{-1}((mh)_{(-1)})f^n \otimes f^0 \otimes \ldots \otimes f^{n-1}&\\
&=m_{(0)}h_2 \otimes_H S^{-1}\left(S(h_3)m_{(-1)}h_1\right) f^n \otimes f^0 \otimes \ldots \otimes f^{n-1}& (\text{by } \eqref{SAYDcondi})\\
&=m_{(0)}h_2 \otimes_H S^{-1}(h_1)S^{-1}(m_{(-1)})h_3f^n \otimes f^0 \otimes \ldots \otimes f^{n-1}&\\
&=m_{(0)}\otimes_H h_2 S^{-1}(h_1)S^{-1}(m_{(-1)})h_{n+3}f^n \otimes h_3f^0 \otimes \ldots \otimes h_{n+2}f^{n-1}&\\
&=m_{(0)}\otimes_H \varepsilon(h_1)S^{-1}(m_{(-1)})h_{n+2}f^n \otimes h_2f^0 \otimes \ldots \otimes h_{n+1}f^{n-1}&\\
&=m_{(0)}\otimes_H S^{-1}(m_{(-1)})h_{n+1}f^n \otimes h_1f^0 \otimes \ldots \otimes h_{n}f^{n-1}&\\
&=t_n\left(m \otimes_H h(f^0 \otimes \ldots \otimes f^{n})\right)
\end{array}
\end{equation*}
It may be verified easily that the face maps and the degeneracies are also well-defined. Moreover,
\begin{equation*}
\begin{array}{lll}
t_n^{n+1}(m \otimes_H f^0 \otimes \ldots \otimes f^{n})&=m_{(0)}\otimes_H S^{-1}(m_{(-1)})(f^0 \otimes \ldots \otimes f^{n})=m_{(0)}S^{-1}(m_{(-1)}) \otimes_H f^0 \otimes \ldots \otimes f^{n}\\
&=m \otimes_H f^0 \otimes \ldots \otimes f^{n} \quad (\text{by } \eqref{2obsv})
\end{array}
\end{equation*}
\end{proof}

The cyclic homology groups corresponding to the cyclic module $C_\bullet^H(\mathcal{D}_H,M)$ will be denoted by $HC^H_\bullet(\mathcal{D}_H,M)$. 

\smallskip
We fix $r\geq 1$. For $1\leq i,j\leq r$ and $\alpha\in k$, we let $E_{ij}(\alpha)$ denote the elementary matrix in $M_r(k)$ having $\alpha$
at $(i,j)$-th position and $0$ everywhere else. We will often use $E_{ij}$ for $E_{ij}(1)$. Let $\mathcal{D}_H$ be a left $H$-category. For each $1\leq p\leq r$, we have an inclusion $inc_p:\mathcal{D}_H \longrightarrow \mathcal{D}_H \otimes M_r(k)$ in $\overline{Cat}_H$ given by
\begin{equation*}
inc_p(X)=X \qquad inc_p(f)=f\otimes E_{pp}=f \otimes E_{pp}(1) 
\end{equation*}

For any right-left SAYD-module $M$, the inclusion $inc_p:\mathcal{D}_H \longrightarrow \mathcal{D}_H  \otimes M_r(k)$ induces an inclusion map 
\begin{equation*}
({inc}_p,M):M \otimes  CN_n(\mathcal{D}_H)  \longrightarrow M \otimes CN_n\left(\mathcal{D}_H  \otimes M_r(k)\right) 
\end{equation*}
\begin{equation*}
m \otimes f^0 \otimes \ldots \otimes f^n \mapsto m \otimes (f^0 \otimes E_{pp}) \otimes \ldots \otimes (f^n \otimes E_{pp})
\end{equation*} This induces a morphism of Hochschild complexes
\begin{equation}\label{hocrq}
C_\bullet(inc_p,M)^{hoc}:C_\bullet(\mathcal{D}_H,M)^{hoc} \longrightarrow C_\bullet\left(\mathcal{D}_H \otimes M_r(k),M\right)^{hoc}
\end{equation} as well as a morphism of double complexes computing cyclic homology
\begin{equation}\label{cyrq}
C_{\bullet\bullet}(inc_p,M)^{cy}:C_{\bullet\bullet}(\mathcal{D}_H,M)^{cy} \longrightarrow C_{\bullet\bullet}\left(\mathcal{D}_H \otimes M_r(k),M\right)^{cy}
\end{equation}

\begin{definition}
Let $\mathcal{D}_H$ be a left $H$-category, $M$ be a right-left SAYD module over $H$ and let $r \in \mathbb{N}$. Then, for each $n \geq 0$, we define an $H$-linear trace map 
\begin{equation*}
tr^M:M \otimes CN_n\left(\mathcal{D}_H \otimes M_r(k)\right) \longrightarrow M \otimes CN_n(\mathcal{D}_H)  
\end{equation*}
\begin{equation}\label{tracemap}
tr^M\left(m \otimes (f^0 \otimes B^0) \otimes \ldots \otimes (f^n \otimes B^n)\right):=(m \otimes f^0 \otimes \ldots \otimes f^n)\text{trace}(B^0\ldots B^n)
\end{equation}
for any $m \in M$ and $(f^0 \otimes B^0) \otimes \ldots \otimes (f^n \otimes B^n) \in CN_n\left(\mathcal{D}_H \otimes M_r(k)\right)$.
\end{definition}

\begin{lemma}
Let $\mathcal{D}_H$ be a left $H$-category, $M$ be a right-left SAYD module and let $r \in \mathbb{N}$. Then, the trace map as in \eqref{tracemap} defines a morphism $C_\bullet(tr^M):C_\bullet\left(\mathcal{D}_H \otimes M_r(k),M\right) \longrightarrow C_\bullet(\mathcal{D}_H,M)$ of para-cyclic modules.
In particular, we have an induced morphism between underlying Hochschild complexes
\begin{equation*}
C_\bullet({tr^M})^{hoc}:C_\bullet\left(\mathcal{D}_H \otimes M_r(k),M\right)^{hoc} \longrightarrow  C_\bullet(\mathcal{D}_H,M)^{hoc}
\end{equation*}
\end{lemma}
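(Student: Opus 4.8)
The plan is to verify directly that $tr^M$ intertwines the para-cyclic structure maps $d_i$, $s_i$ and $t_n$ on both sides, i.e.\ that $C_\bullet(tr^M)$ commutes with faces, degeneracies and the cyclic operator. Since all three families of structure maps are written out explicitly in Proposition \ref{P6.2wp}, the verification amounts to a finite collection of identities, and the whole argument rests on three elementary facts about the matrix factor. First, composition in $\mathcal{D}_H \otimes M_r(k)$ is $(g\otimes B')\circ(f\otimes B)=(g\circ f)\otimes(B'B)$, with identity morphism $id_X\otimes I_r$. Second, the scalar $\mathrm{trace}(B^0\cdots B^n)$ is invariant under cyclic permutation of the $B^i$, is unchanged by regrouping adjacent factors, and is unaffected by inserting an $I_r$. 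Third, the $H$-action leaves the matrix coordinate untouched, since $h(f\otimes B)=hf\otimes B$, so that $S^{-1}(m_{(-1)})(f^n\otimes B^n)=(S^{-1}(m_{(-1)})f^n)\otimes B^n$.

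First I would treat the inner faces $d_i$ and degeneracies $s_i$ for $0\le i\le n-1$. Applying $d_i$ on the matrix side merges $(f^i\otimes B^i)(f^{i+1}\otimes B^{i+1})=(f^if^{i+1})\otimes(B^iB^{i+1})$, and after $tr^M$ the matrix factor is $\mathrm{trace}(B^0\cdots B^iB^{i+1}\cdots B^n)=\mathrm{trace}(B^0\cdots B^n)$; this equals $d_i$ applied after $tr^M$, since $d_i$ on $C_\bullet(\mathcal{D}_H,M)$ only merges $f^if^{i+1}$ and does not touch the scalar. The degeneracy case is identical, using that the inserted identity $id_{X_{i+1}}\otimes I_r$ contributes a factor $I_r$ inside the trace, which may be deleted.

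The main point requiring care is the pair of maps that invoke the SAYD coaction, namely the last face $d_n$ and the cyclic operator $t_n$. For $t_n$, the matrix side produces $m_{(0)}\otimes(S^{-1}(m_{(-1)})f^n\otimes B^n)\otimes(f^0\otimes B^0)\otimes\cdots$, and $tr^M$ yields the scalar $\mathrm{trace}(B^nB^0\cdots B^{n-1})$, whereas $t_n$ applied after $tr^M$ leaves the scalar as $\mathrm{trace}(B^0\cdots B^n)$; these agree precisely by cyclic invariance of the trace, while the module coordinate $m_{(0)}\otimes S^{-1}(m_{(-1)})f^n\otimes f^0\otimes\cdots\otimes f^{n-1}$ is literally the same on both sides, since the coaction acts only through the $H$-factor. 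The same cyclic identity, now combined with the merging $(S^{-1}(m_{(-1)})f^n)f^0$ and $B^nB^0$, settles $d_n$. Thus $tr^M$ commutes with every structure map and is a morphism of para-cyclic modules. In particular, since $C_\bullet(tr^M)$ is compatible with all the faces $d_i$, it commutes with the Hochschild differential $b=\sum_i(-1)^i d_i$, which yields the asserted morphism $C_\bullet(tr^M)^{hoc}$ of underlying Hochschild complexes.
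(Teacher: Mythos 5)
Your proof is correct and follows the same route the paper intends: the paper's own proof simply asserts that $tr^M$ commutes with the faces, degeneracies and cyclic operator, and you have supplied exactly that direct verification, correctly isolating the three facts that make it work (multiplicativity of composition on the matrix factor, cyclic invariance of the matrix trace, and the $H$-action leaving the matrix coordinate untouched).
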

\begin{proof}
It may be verified easily  that the trace map $tr^M$ commutes with the face maps, the cyclic operator and the degeneracies. 
\end{proof}

\begin{prop}\label{homotopy}
The maps $C_\bullet({inc}_1,M)^{hoc}$ and $C_\bullet(tr^M)^{hoc}$ are homotopy inverses of each other.
\end{prop}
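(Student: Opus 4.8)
The plan is to verify the two composites separately, writing $\mathcal I:=C_\bullet(inc_1,M)^{hoc}$ and $\mathcal T:=C_\bullet(tr^M)^{hoc}$, both of which are already chain maps of the Hochschild complexes in \eqref{hocrq} by the preceding lemmas. First I would check that $\mathcal T\circ\mathcal I=\mathrm{id}$ on $C_\bullet(\mathcal D_H,M)^{hoc}$ on the nose. On a generator,
\[
\mathcal T\big(\mathcal I(m\otimes f^0\otimes\cdots\otimes f^n)\big)=\mathcal T\big(m\otimes(f^0\otimes E_{11})\otimes\cdots\otimes(f^n\otimes E_{11})\big)=(m\otimes f^0\otimes\cdots\otimes f^n)\,\mathrm{trace}(E_{11}\cdots E_{11})
\]
by \eqref{tracemap}, and since $E_{11}^{\,n+1}=E_{11}$ and $\mathrm{trace}(E_{11})=1$, the right-hand side is $m\otimes f^0\otimes\cdots\otimes f^n$. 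Thus one composite is literally the identity and needs no homotopy.

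The substance of Proposition \ref{homotopy} is therefore that $\mathcal I\circ\mathcal T\simeq\mathrm{id}$ on $C_\bullet(\mathcal D_H\otimes M_r(k),M)^{hoc}$. Working on the matrix-unit basis, a generator has the form $m\otimes(f^0\otimes E_{i_0 j_0})\otimes\cdots\otimes(f^n\otimes E_{i_n j_n})$, and by \eqref{tracemap} the operator $\mathcal I\circ\mathcal T$ sends it to the all-$E_{11}$ generator when the matrix indices chain cyclically (that is, $j_0=i_1,\dots,j_{n-1}=i_n,\ j_n=i_0$) and to $0$ otherwise. I would construct an explicit chain homotopy $h\colon C_n\to C_{n+1}$ following the classical Morita-invariance homotopy for the Hochschild complex of a matrix algebra (\cite[\S 1.2]{Loday}; \cite{carthy} for the several-objects version). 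Concretely, using the factorization $E_{ij}=E_{i1}E_{1j}$ and the insertion of the matrix-unit morphisms $\mathrm{id}_X\otimes E_{1,\cdot}$ available in $\mathcal D_H\otimes M_r(k)$, one defines operators $h_l$ for $0\le l\le n$ that successively ``telescope'' each matrix index down to $1$, and sets $h=\sum_l(-1)^{l}h_l$. One then verifies $bh+hb=\mathrm{id}-\mathcal I\circ\mathcal T$ on generators, where $b=\sum_{i=0}^n(-1)^i d_i$: the inner faces $d_i$ ($0\le i\le n-1$) merge adjacent tensor factors and recombine the inserted $E_{1,\cdot}$'s, the successive insertions cancel in consecutive pairs, and only the two boundary terms $\mathrm{id}$ and $\mathcal I\circ\mathcal T$ survive.

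The delicate point, and the place where this differs from the classical computation, is the wrap-around face $d_n$ and the cyclic operator $t_n$, which by Proposition \ref{P6.2wp} carry the SAYD twist $m\otimes f^0\otimes\cdots\otimes f^n\mapsto m_{(0)}\otimes(S^{-1}(m_{(-1)})f^n)f^0\otimes\cdots$. At the cyclic boundary the telescoping therefore does not close up formally: the term produced by $d_n$ matches $\mathcal I\circ\mathcal T$ only after one moves the coaction factor $S^{-1}(m_{(-1)})$ past the matrix-unit bookkeeping, which requires the SAYD axiom \eqref{SAYDcondi}, the stability identity \eqref{2obsv}, and Lemma \ref{lem2.3f}. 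I expect this compatibility of the coaction twist with each $h_l$ to be the main obstacle. One simplifying feature is that we work only with the underlying Hochschild complexes of \eqref{hocrq}, i.e. as complexes of $k$-modules rather than after passing to $\otimes_H$, so no $H$-linearity of $h$ itself is needed; the only genuine work is to show that the twist $S^{-1}(m_{(-1)})$ is transported coherently through the homotopy so that the boundary contribution of $d_n$ reproduces $\mathcal I\circ\mathcal T$ exactly.
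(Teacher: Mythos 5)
Your proposal follows essentially the same route as the paper: the composite $C_\bullet(tr^M)^{hoc}\circ C_\bullet(inc_1,M)^{hoc}$ is the identity on the nose, and the other composite is handled by the classical matrix-telescoping pre-simplicial homotopy $\hbar=\sum_i(-1)^i\hbar_i$ that factors $E_{ij}=E_{i1}E_{1j}$ and inserts morphisms of the form $id_X\otimes E_{1q}(1)$, exactly as in the paper's proof. The one point you leave open --- the interaction of the wrap-around face $d_{n+1}$ with the SAYD twist, which you flag as the main obstacle --- is in fact a non-issue: the last tensor factor produced by $\hbar_n$ is $id_{X_0}\otimes E_{1q}(1)$, and since the $H$-action on $\mathcal{D}_H\otimes M_r(k)$ is $h(f\otimes B)=hf\otimes B$ with $h\cdot id_{X_0}=\varepsilon(h)\,id_{X_0}$ by Definition \ref{defH-cat}, the factor $S^{-1}(m_{(-1)})$ acts through the counit and the twist collapses via $m_{(0)}\varepsilon(m_{(-1)})=m$, so neither the SAYD axiom \eqref{SAYDcondi}, nor stability, nor Lemma \ref{lem2.3f} is needed at that step.
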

\begin{proof}
We first verify that $C_\bullet(tr^M)^{hoc} \circ C_\bullet(inc_1,M)^{hoc}=id$. For any $m \otimes f^0 \otimes \ldots \otimes f^{n} \in M \otimes Hom_{\mathcal{D}_H}(X_1,X_0) \otimes Hom_{\mathcal{D}_H}(X_2,X_1) \otimes \ldots \otimes Hom_{\mathcal{D}_H}(X_0,X_n)$, we have
\begin{equation*}
\begin{array}{ll}
(C_\bullet(tr^M)^{hoc} \circ C_\bullet(inc_1,M)^{hoc})(m \otimes f^0 \otimes \ldots \otimes f^{n})&=tr^M\left(m \otimes (f^0 \otimes E_{11}) \otimes \ldots \otimes (f^n \otimes E_{11})\right)\\
&=(m \otimes f^0 \otimes \ldots \otimes f^{n})trace(E_{11}\ldots E_{11})\\
&=m \otimes f^0 \otimes \ldots \otimes f^{n}
\end{array}
\end{equation*}
Therefore, it remains to show that $C_\bullet(inc_1,M)^{hoc} \circ C_\bullet(tr^M)^{hoc} \sim id$. 
We define  $k$-linear maps
$\{\hbar_i: C_{n}\left(\mathcal{D}_H \otimes M_r(k),M\right) \longrightarrow C_{n+1}\left(\mathcal{D}_H \otimes M_r(k),M\right)\}_{0\leq i\leq n}$  by setting:
\begin{equation*}
\begin{array}{ll}
\hbar_i\left(m \otimes (f^0 \otimes B^0) \otimes \ldots \otimes (f^n \otimes B^n)\right):=& m \otimes \sum_{1 \leq j,k,l,\ldots,p,q\leq r}  (f^0 \otimes E_{j1}(B^0_{jk})) \otimes (f^1 \otimes E_{11}(B^1_{kl})) \otimes \ldots\\ & \quad  \otimes (f^i \otimes E_{11}(B^i_{pq})) \otimes (id_{X_{i+1}} \otimes E_{1q}(1)) \otimes (f^{i+1} \otimes B^{i+1}) \otimes \ldots\\
& \quad \ldots \otimes (f^n \otimes B^n)
\end{array}
\end{equation*}
for $0\leq i<n$ and 
\begin{equation*}
\begin{array}{ll}
\hbar_n\left(m \otimes (f^0 \otimes B^0) \otimes \ldots \otimes (f^n \otimes B^n)\right):= & m \otimes \sum_{1 \leq j,k,m,\ldots,p,q \leq r}  (f^0 \otimes E_{j1}(B^0_{jk})) \otimes (f^1 \otimes E_{11}(B^1_{km})) \otimes \ldots\\ & \quad \ldots \otimes (f^n \otimes E_{11}(B^n_{pq})) \otimes (id_{X_0} \otimes E_{1q}(1))
\end{array}
\end{equation*}
We now verify that $\hbar^n:=\sum_{i=0}^n (-1)^i\hbar_i$ is a pre-simplicial homotopy (see, for instance, \cite[$\S$ 1.0.8]{Loday}) between $C_\bullet(inc_1,M)^{hoc} \circ C_\bullet(tr^M)^{hoc}$ and $id_{C_\bullet\left(\mathcal{D}_H \otimes M_r(k),M\right)}$. For this, we need to verify the following identities:
\begin{equation}\label{relations}
\begin{array}{lll}
d_i\hbar_{i'}=\hbar_{i'-1}d_i & \text{for}~ i<i'\\
d_i\hbar_i=d_i\hbar_{i-1} & \text{for}~ 0< i \leq n\\
d_i\hbar_{i'}= \hbar_{i'}d_{i-1} & \text{for}~ i>i'+1\\
d_0\hbar_0= id_{C_\bullet\left(\mathcal{D}_H \otimes M_r(k),M\right)^{hoc}}& \text{and}~ d_{n+1}\hbar_n=C_\bullet(inc_1,M)^{hoc} \circ C_\bullet(tr^M)^{hoc}
\end{array}
\end{equation}
where $d_i:C_{n+1}\left(\mathcal{D}_H \otimes M_r(k),M\right) \longrightarrow C_{n}\left(\mathcal{D}_H \otimes M_r(k),M\right)$, $0 \leq i \leq n+1$ are the face maps.  

\smallskip
For $0<i <i'$, we have
\begin{equation*}
\begin{array}{ll}
&d_i\hbar_{i'}\left(m \otimes (f^0 \otimes B^0) \otimes \ldots \otimes (f^n \otimes B^n)\right)\\
& \quad =d_i\big(m \otimes \sum_{1 \leq j,k,l,\ldots,p,q \leq r}  (f^0 \otimes E_{j1}(B^0_{jk})) \otimes (f^1 \otimes E_{11}(B^1_{kl})) \otimes \ldots \otimes (f^{i'} \otimes E_{11}(B^{i'}_{pq})) \otimes \\
& \qquad (id_{X_{i'+1}} \otimes E_{1q}(1)) \otimes (f^{i'+1} \otimes B^{i'+1}) \otimes \ldots  \otimes (f^n \otimes B^n)\big)\\
& \quad = m \otimes \sum_{1 \leq l,k,\ldots,u,v_i,w,\ldots,p,q\leq r}  (f^0 \otimes E_{l1}(B^0_{lk})) \otimes \ldots \otimes (f^i \otimes E_{11}(B^i_{uv_i}))(f^{i+1} \otimes E_{11}(B^{i+1}_{v_iw})) \otimes \ldots\\
& \qquad \otimes (f^{i'} \otimes E_{11}(B^{i'}_{pq})) \otimes (id_{X_{i'+1}} \otimes E_{1q}(1)) \otimes (f^{i'+1} \otimes B^{i'+1}) \otimes \ldots  \otimes (f^n \otimes B^n)\\
& \quad = m \otimes \sum_{1 \leq l,k,\ldots,u,w, \ldots,p,q \leq r}  (f^0 \otimes E_{l1}(B^0_{lk})) \otimes \ldots \otimes (f^if^{i+1} \otimes E_{11}\left((B^iB^{i+1})_{uw}\right)) \otimes \ldots \\
&  \qquad \otimes (f^{i'} \otimes E_{11}(B^{i'}_{pq})) \otimes (id_{X_{i'+1}} \otimes E_{1q}(1)) \otimes (f^{i'+1} \otimes B^{i'+1}) \otimes \ldots  \otimes (f^n \otimes B^n)\\
& \quad= \hbar_{i'-1}d_i\left(m \otimes (f^0 \otimes B^0) \otimes \ldots \otimes (f^n \otimes B^n)\right)
\end{array}
\end{equation*}
Moreover,
\begin{equation*}
\begin{array}{ll}
&d_0\hbar_{i'}\left(m \otimes (f^0 \otimes B^0) \otimes \ldots \otimes (f^n \otimes B^n)\right)\\
& \quad =d_0\big(m \otimes \sum_{1 \leq j,k,l,\ldots,p,q \leq r}  (f^0 \otimes E_{j1}(B^0_{jk})) \otimes (f^1 \otimes E_{11}(B^1_{kl})) \otimes \ldots \otimes (f^{i'} \otimes E_{11}(B^{i'}_{pq})) \otimes \\
& \qquad (id_{X_{i'+1}} \otimes E_{1q}(1)) \otimes (f^{i'+1} \otimes B^{i'+1}) \otimes \ldots  \otimes (f^n \otimes B^n)\big)\\
& \quad =m \otimes \sum_{1 \leq j,k,l,\ldots,p,q \leq r}  \left(f^0f^1 \otimes E_{j1}(B^0_{jk})E_{11}(B^1_{kl})\right) \otimes  \ldots \otimes (f^{i'} \otimes E_{11}(B^{i'}_{pq})) \otimes \\
& \qquad (id_{X_{i'+1}} \otimes E_{1q}(1)) \otimes (f^{i'+1} \otimes B^{i'+1}) \otimes \ldots  \otimes (f^n \otimes B^n)\\
& \quad =m \otimes \sum_{1 \leq j,k,l,\ldots,p,q \leq r}  \left(f^0f^1 \otimes E_{j1}(B^0_{jk}B^1_{kl})\right) \otimes  \ldots \otimes (f^{i'} \otimes E_{11}(B^{i'}_{pq})) \otimes \\
& \qquad (id_{X_{i'+1}} \otimes E_{1q}(1)) \otimes (f^{i'+1} \otimes B^{i'+1}) \otimes \ldots  \otimes (f^n \otimes B^n)\\
& \quad =m \otimes \sum_{1 \leq j,k,\ldots,p,q \leq r}  \left(f^0f^1 \otimes E_{j1}((B^0B^1)_{jk})\right) \otimes  \ldots \otimes (f^{i'}\otimes E_{11}(B^{i'}_{pq})) \otimes \\
& \qquad (id_{X_{i'+1}} \otimes E_{1q}(1)) \otimes (f^{i'+1} \otimes B^{i'+1}) \otimes \ldots  \otimes (f^n \otimes B^n)\big)\\
& \quad= \hbar_{i'-1}d_0\left(m \otimes (f^0 \otimes B^0) \otimes \ldots \otimes (f^n \otimes B^n)\right)
\end{array}
\end{equation*}
Next, using the equality $\sum_{l=1}^rE_{11}(B_{kl})E_{1l}(1)=E_{1k}(1)B$ for all $B\in M_r(k)$, $1 \leq k \leq r$, we have for $0 <i <n$:
\begin{equation*}
\begin{array}{ll}
&d_i\hbar_i\left(m \otimes (f^0 \otimes B^0) \otimes \ldots \otimes (f^n \otimes B^n)\right)\\
&\quad =d_i\big(m \otimes \sum_{1 \leq j,k,l,\ldots,p,q \leq r}  (f^0 \otimes E_{j1}(B^0_{jk})) \otimes (f^1 \otimes E_{11}(B^1_{kl})) \otimes \ldots \otimes (f^i \otimes E_{11}(B^i_{pq})) \otimes\\
& \qquad  (id_{X_{i+1}} \otimes E_{1q}(1)) \otimes (f^{i+1} \otimes B^{i+1}) \otimes \ldots  \otimes (f^n \otimes B^n)\big)\\
& \quad =m \otimes \sum_{1 \leq j,k,l,\ldots,p,q \leq r}  (f^0 \otimes E_{j1}(B^0_{jk})) \otimes (f^1 \otimes E_{11}(B^1_{kl})) \otimes \ldots \otimes (f^i \otimes E_{11}(B^i_{pq})E_{1q}(1)) \otimes\\
& \qquad \ldots \otimes (f^n \otimes B^n)\\
& \quad =m \otimes \sum_{1 \leq j,k,l,\ldots,p \leq r}  (f^0 \otimes E_{j1}(B^0_{jk})) \otimes (f^1 \otimes E_{11}(B^1_{kl})) \otimes \ldots \otimes (f^i \otimes \sum_{q=1}^r E_{11}(B^i_{pq})E_{1q}(1)) \otimes\\
& \qquad \ldots \otimes (f^n \otimes B^n)\\
& \quad =m \otimes \sum_{1 \leq j,k,l,\ldots,u,p \leq r}  (f^0 \otimes E_{j1}(B^0_{jk})) \otimes (f^1 \otimes E_{11}(B^1_{kl})) \otimes \ldots \otimes (f^{i-1} \otimes E_{11}(B^{i-1}_{up})) \otimes \\
& \qquad (f^i \otimes E_{1p}(1)B^i) \otimes \ldots \otimes (f^n \otimes B^n)\\
& \quad =  d_i\big(m \otimes \sum_{1 \leq j,k,l,\ldots,u,p \leq r}  (f^0 \otimes E_{j1}(B^0_{jk})) \otimes (f^1 \otimes E_{11}(B^1_{kl})) \otimes \ldots \otimes (f^{i-1} \otimes E_{11}(B^{i-1}_{up})) \otimes \\
& \qquad (id_{X_i} \otimes E_{1p}(1)) \otimes (f^i \otimes B^i) \otimes \ldots \otimes (f^n \otimes B^n)\big)\\
& \quad = d_i\hbar_{i-1}\left(m \otimes (f^0 \otimes B^0) \otimes \ldots \otimes (f^n \otimes B^n)\right)
\end{array}
\end{equation*}
The case $i=n$ follows similarly. For $i>i'+1$, we have
\begin{equation*}
\begin{array}{ll}
&d_i\hbar_{i'}\left(m \otimes (f^0 \otimes B^0) \otimes \ldots \otimes (f^n \otimes B^n)\right)\\
& \quad =d_i\big(m \otimes \sum_{1 \leq j,k\ldots,p,q \leq r}  (f^0 \otimes E_{j1}(B^0_{jk})) \otimes  \ldots \otimes (f^{i'} \otimes E_{11}(B^{i'}_{pq})) \otimes (id_{X_{i'+1}} \otimes E_{1q}(1))\\
& \qquad  \otimes (f^{i'+1} \otimes B^{i'+1}) \otimes \ldots  \otimes (f^n \otimes B^n)\big)\\
& \quad = m \otimes \sum_{1 \leq j,k\ldots,p,q \leq r}  (f^0 \otimes E_{j1}(B^0_{jk})) \otimes  \ldots \otimes (f^{i'} \otimes E_{11}(B^{i'}_{pq})) \otimes (id_{X_{i'+1}} \otimes E_{1q}(1))\\
& \qquad  \otimes (f^{i'+1} \otimes B^{i'+1}) \otimes \ldots \otimes (f^{i-1}f^i \otimes B^{i-1}B^i) \otimes \ldots  \otimes (f^n \otimes B^n)\\
&\quad = \hbar_{i'}\left(m \otimes (f^0 \otimes B^0) \otimes \ldots \otimes (f^{i-1}f^i \otimes B^{i-1}B^i) \otimes \ldots \otimes (f^n \otimes B^n)\right)\\
& \quad = \hbar_{i'}d_{i-1}\left(m \otimes (f^0 \otimes B^0) \otimes \ldots \otimes (f^n \otimes B^n)\right)
\end{array}
\end{equation*}
Further, using the equality $\sum_{1 \leq j,k \leq r}  E_{j1}(B_{jk})E_{1k}(1)=B$, we have
\begin{equation*}
\begin{array}{ll}
&d_0\hbar_0\left(m \otimes (f^0 \otimes B^0) \otimes \ldots \otimes (f^n \otimes B^n)\right)\\
& \quad =d_0\big(m \otimes  \sum_{1 \leq j,k \leq r}  (f^0 \otimes E_{j1}(B^0_{jk})) \otimes (id_{X_1} \otimes E_{1k}(1)) \otimes (f^1 \otimes B^1) \otimes \ldots \otimes (f^n \otimes B^n)\big)\\
& \quad =m \otimes  \sum_{1 \leq j,k \leq r}  (f^0 \otimes E_{j1}(B^0_{jk})E_{1k}(1)) \otimes (f^1 \otimes B^1) \otimes \ldots \otimes (f^n \otimes B^n)\\
&\quad= m \otimes (f^0 \otimes B^0) \otimes \ldots \otimes (f^n \otimes B^n)
\end{array}
\end{equation*}
Finally, using the fact that $E_{1q}(1)E_{j1}(B_{jk})=0$ unless $q=j$, we have 
\begin{equation*}
\begin{array}{ll}
&d_{n+1}\hbar_n\left(m \otimes (f^0 \otimes B^0) \otimes \ldots \otimes (f^n \otimes B^n)\right)\\
& \quad =d_{n+1}\big(m \otimes \sum_{1 \leq j,k,l,\ldots,p,q \leq r}  (f^0 \otimes E_{j1}(B^0_{jk})) \otimes (f^1 \otimes E_{11}(B^1_{kl})) \otimes \ldots \otimes (f^n \otimes E_{11}(B^n_{pq})) \otimes 
 (id_{X_{0}} \otimes E_{1q}(1))\big)\\
& \quad =m_{(0)} \otimes \sum_{1 \leq j,k,l,\ldots,p,q \leq r} \left(S^{-1}(m_{(-1)})(id_{X_{0}} \otimes E_{1q}(1))\right)(f^0 \otimes E_{j1}(B^0_{jk}))\otimes (f^1 \otimes E_{11}(B^1_{kl})) \otimes \ldots \\
&\qquad \ldots \otimes (f^n \otimes E_{11}(B^n_{pq}))\\
& \quad= m \otimes \sum_{1 \leq j,k,l,\ldots,p,q \leq r}\left(f^0 \otimes E_{1q}(1)E_{j1}(B^0_{jk})\right) \otimes (f^1 \otimes E_{11}(B^1_{kl})) \otimes \ldots \otimes (f^n \otimes E_{11}(B^n_{pq}))\\
& \quad= m \otimes \sum_{1 \leq j,k,l,\ldots,p\leq r}\left(f^0 \otimes E_{1j}(1)E_{j1}(B^0_{jk})\right) \otimes (f^1 \otimes E_{11}(B^1_{kl})) \otimes \ldots \otimes (f^n \otimes E_{11}(B^n_{pj}))\\
& \quad= m \otimes \sum_{1 \leq j,k,l,\ldots,p\leq r}(f^0 \otimes E_{11}(B^0_{jk})) \otimes (f^1 \otimes E_{11}(B^1_{kl})) \otimes \ldots \otimes (f^n \otimes E_{11}(B^n_{pj}))\\
& \quad = \left(m \otimes (f^0 \otimes E_{11}) \otimes \ldots \otimes (f^n \otimes E_{11})\right) \sum_{1 \leq j,k,l,\ldots,p\leq r}(B^0_{jk}B^1_{kl}\ldots B^n_{pj})\\
& \quad = \left(m \otimes (f^0 \otimes E_{11}) \otimes \ldots \otimes (f^n \otimes E_{11})\right) \sum_{1 \leq j \leq r} 
(B^0B^1 \ldots B^n)_{jj}\\
& \quad = \left(m \otimes (f^0 \otimes E_{11}) \otimes \ldots \otimes (f^n \otimes E_{11})\right)trace(B^0B^1 \ldots B^n)\\
& \quad = \left(C_\bullet(inc_1,M)^{hoc} \circ C_\bullet(tr^M)^{hoc}\right)\left(m \otimes (f^0 \otimes B^0) \otimes \ldots \otimes (f^n \otimes B^n)\right)
\end{array}
\end{equation*} This proves the result.
\end{proof}

We denote by $Vect_k$ the category of all $k$-vector spaces and by $H\text{-}Mod$ the category of all left $H$-modules. Let
$Hom_H(-,k):H\text{-}Mod \longrightarrow Vect_k$ be the functor that takes $N \mapsto Hom_H(N,k)$. 

\begin{prop}\label{moritainvcyc} (1)  Applying the functor $Hom_H(-,k)$, we obtain morphisms of Hochschild complexes
\begin{equation*}
C^{\bullet}_H(inc_1,M)^{hoc}:C^{\bullet}_H\left(\mathcal{D}_H \otimes M_r(k),M\right)^{hoc} \longrightarrow C^{\bullet}_H(\mathcal{D}_H,M)^{hoc}
\end{equation*} as well as a morphism of double complexes computing cyclic cohomology
\begin{equation*}
C^{\bullet\bullet}_H(inc_1,M)^{cy}:C^{\bullet\bullet}_H(\mathcal{D}_H\otimes M_r(k),M)^{cy} \longrightarrow C_H^{\bullet\bullet}\left(\mathcal{D}_H ,M\right)^{cy}
\end{equation*}

(2) Applying the functor $Hom_H(-,k)$, we obtain morphisms of cocyclic modules:
\begin{equation*}
\begin{array}{ll}
C^\bullet_H(tr^M):=Hom_H(C_\bullet(tr^M),k): C^\bullet_H(\mathcal{D}_H,M) \longrightarrow C^\bullet_H\left(\mathcal{D}_H \otimes M_r(k),M\right) 
\end{array}
\end{equation*}
(3) The morphisms
\begin{equation*}
\begin{array}{ll}
HC^{\bullet}_H(inc_1,M)^{hoc}:HC^{\bullet}_H\left(\mathcal{D}_H \otimes M_r(k),M\right)^{hoc} \longrightarrow HC^{\bullet}_H(\mathcal{D}_H,M)^{hoc}\\
HC^{\bullet}_H(tr^M)^{hoc}:HC^{\bullet}_H(\mathcal{D}_H,M)^{hoc} \longrightarrow HC^{\bullet}_H\left(\mathcal{D}_H \otimes M_r(k),M\right)^{hoc}
\end{array}
\end{equation*}
induced by $C^{\bullet}_H(inc_1,M)^{hoc}$ and $C^{\bullet}_H(tr^M)^{hoc}$ are mutually inverse isomorphisms of Hochschild cohomologies.

\smallskip
(4) Applying the functor $Hom_H(-,k)$ gives isomorphisms 
 \begin{equation*} \xy (10,0)*{HC^\bullet_H(\mathcal{D}_H,M)}; (40,3)*{HC^{\bullet}_H(tr^M)}; (40,-3.3)*{HC^{\bullet}_H(inc_1,M)};  {\ar
(25,1)*{}; (60,1)*{}}; {\ar (60,-1)*{};(25,-1)*{}};
\endxy \quad HC^\bullet_H\left(\mathcal{D}_H \otimes M_r(k),M\right)
\end{equation*} of Hopf-cyclic cohomologies.
\end{prop}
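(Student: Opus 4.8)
The plan is to deduce all four parts by applying the contravariant functor $Hom_H(\blank,k)$ to the homological results of Section \ref{Morita}, and then to upgrade the resulting Hochschild-level equivalence to the cyclic level. For parts (1) and (2), I would first record that $C_\bullet(inc_1,M)$ and $C_\bullet(tr^M)$ are morphisms of para-cyclic modules \emph{in right $H$-modules}: both are $H$-linear for the right action on $M\otimes CN_\bullet(\blank)$, and both commute with the structure maps $d_i,s_i,t_n$ of Proposition \ref{P6.2wp}(1). Since $Hom_H(\blank,k)$ is an additive contravariant functor, applying it termwise sends each such morphism to a morphism, in the opposite direction, between the dual cocyclic modules $C^\bullet_H(\blank,M)=Hom_H(M\otimes CN_\bullet(\blank),k)$. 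That we land in genuine cocyclic (not merely para-cocyclic) modules is exactly Proposition \ref{prop2.3}(2), where $\tau_n^{n+1}=id$ was verified on $H$-linear functionals using the stability of $M$. Restricting to the Hochschild differential gives the cochain morphisms of (1), applying the functor to the cyclic bicomplex gives the morphism of double complexes, and dualizing $C_\bullet(tr^M)$ gives (2).

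For part (3) the essential input is Proposition \ref{homotopy}, which provides the exact identity $C_\bullet(tr^M)^{hoc}\circ C_\bullet(inc_1,M)^{hoc}=id$ together with a pre-simplicial homotopy $\hbar=\sum_i(-1)^i\hbar_i$ realizing $C_\bullet(inc_1,M)^{hoc}\circ C_\bullet(tr^M)^{hoc}\simeq id$. The one genuinely substantive point, which I expect to be the main obstacle, is that each $\hbar_i$ is $H$-linear for the right $H$-action, so that the homotopy survives the non-exact functor $Hom_H(\blank,k)$. This is where the precise shape of $\hbar_i$ is used: it only redistributes the ($H$-trivial) matrix entries of $M_r(k)$ and inserts an identity morphism $id_{X_{i+1}}$, and since $h\cdot id_X=\varepsilon(h)id_X$, the extra coproduct factor produced when the diagonal right action of $H$ is applied to an $(n{+}1)$-chain is absorbed by the counit precisely against this inserted identity; this reconciles the action on the $n$-chain with the action on its image, so $\hbar$ is a morphism of right $H$-modules. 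Granting this, $Hom_H(\hbar,k)$ is a well-defined cochain homotopy, and dualizing the two relations of Proposition \ref{homotopy} yields $C^\bullet_H(inc_1,M)^{hoc}\circ C^\bullet_H(tr^M)^{hoc}=id$ and $C^\bullet_H(tr^M)^{hoc}\circ C^\bullet_H(inc_1,M)^{hoc}\simeq id$. Passing to cohomology gives the mutually inverse isomorphisms $HC^\bullet_H(inc_1,M)^{hoc}$ and $HC^\bullet_H(tr^M)^{hoc}$ asserted in (3).

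Finally, part (4) promotes this from Hochschild to Hopf-cyclic cohomology. By part (1), $inc_1$ and $tr^M$ induce morphisms $C^{\bullet\bullet}_H(inc_1,M)^{cy}$ and $C^{\bullet\bullet}_H(tr^M)^{cy}$ of the first-quadrant bicomplexes whose total cohomology is $HC^\bullet_H$, and whose columns are shifts of the Hochschild cochain complexes. By part (3) these morphisms are isomorphisms on the cohomology of every column, so the comparison theorem for the spectral sequences of the column filtration (equivalently, Connes' periodicity $SBI$ exact sequence together with the five lemma and a downward induction, using $\mathbb{Q}\subseteq k$ as in the Remark following Proposition \ref{prop2.3}) forces them to be isomorphisms on the total cohomology $HC^\bullet_H$. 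Since $C^\bullet_H(inc_1,M)\circ C^\bullet_H(tr^M)=id$ already on cochains, the induced composite on $HC^\bullet_H$ is the identity; two isomorphisms one of whose composites is the identity are mutually inverse, which is exactly the statement of (4).
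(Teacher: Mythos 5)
Your proposal is correct and follows essentially the same route as the paper: parts (1) and (2) by dualizing the $H$-linear (para-)cyclic morphisms of Section 6 under $Hom_H(-,k)$, part (3) by dualizing the identity and the pre-simplicial homotopy of Proposition \ref{homotopy}, and part (4) by the Hochschild-to-cyclic spectral sequence comparison. Your explicit check that each $\hbar_i$ is right $H$-linear (so that the homotopy survives the non-exact functor $Hom_H(-,k)$) is a detail the paper leaves implicit, and it is exactly the right point to isolate.
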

\begin{proof}
(1) This follows by applying the functor $Hom_H(-,k)$ to the morphisms in \eqref{hocrq} and \eqref{cyrq}. 

\smallskip
(2) This  follows from Proposition \ref{P6.2wp}(1),  Proposition \ref{prop2.3} and the fact that   $C_\bullet(tr^M):C_\bullet\left(\mathcal{D}_H \otimes M_r(k),M\right) \longrightarrow C_\bullet(\mathcal{D}_H,M)$ is a morphism of para-cyclic modules.

\smallskip
(3) By Proposition \ref{homotopy}, we know that $C_\bullet(tr^M)^{hoc} \circ C_\bullet(inc_1,M)^{hoc}=id_{C_\bullet(\mathcal{D}_H,M)^{hoc}}$ and
$C_\bullet(inc_1,M)^{hoc} \circ C_\bullet(tr^M)^{hoc} \sim id_{C_\bullet\left(\mathcal{D}_H \otimes M_r(k),M\right)^{hoc}}$. Thus, applying the functor $Hom_H(-,k)$, we obtain 
\begin{equation*}
\begin{array}{ll}
C^{\bullet}_H(inc_1,M)^{hoc} \circ C^{\bullet}_H(tr^M)^{hoc}=id_{C^{\bullet}_H(\mathcal{D}_H,M)^{hoc}}\\
C^{\bullet }_H(tr^M)^{hoc} \circ C^{\bullet}_H(inc_1,M)^{hoc} \sim id_{C^{\bullet}_H(\mathcal{D}_H \otimes M_r(k),M)^{hoc}}
\end{array}
\end{equation*}
Therefore, $C^{\bullet}_H(inc_1,M)^{hoc}$ and $C^{\bullet}_H(tr^M)^{hoc}$ are homotopy inverses of each other. 

\smallskip
(4) This follows immediately from (3) and Hochschild to cyclic spectral sequence.
\end{proof}

\begin{corollary}
Given a small $k$-linear category $\mathcal C$, there is an isomorphism $HC^{\bullet}(\mathcal{C})
\overset{\simeq}{\longrightarrow} HC^{\bullet}\left(\mathcal C\otimes M_r(k)\right)$ of cyclic cohomology groups.
\end{corollary}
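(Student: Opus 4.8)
The plan is to obtain this statement as the special case $H = k = M$ of Proposition \ref{moritainvcyc}(4). First I would equip $k$ with its trivial Hopf algebra structure ($\Delta(1) = 1 \otimes 1$, $S = \varepsilon = \mathrm{id}$) and regard $k$ as a right-left SAYD module over itself, with coaction $\rho(\alpha) = 1 \otimes \alpha$; stability and the anti-Yetter-Drinfeld condition \eqref{SAYDcondi} are immediate, since the antipode is the identity and every structure constant collapses. With this choice, any small $k$-linear category $\mathcal{C}$ is automatically a left $H$-category in the sense of Definition \ref{defH-cat}, because conditions (i)--(iii) there become vacuous when $H = k$. In particular $\mathcal{C} \otimes M_r(k)$ is then a left $H$-category whose $H$-action is trivial.

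The key point I would then verify is that the Hopf-cyclic cohomology $HC^\bullet_H(\mathcal{C}, k)$ of this category reduces to the ordinary cyclic cohomology $HC^\bullet(\mathcal{C})$ of Section \ref{nerve}. This is precisely the observation recorded in the Remark following Proposition \ref{prop2.3}: when $H = k$ we have $Hom_H(-, k) = Hom_k(-, k)$, so $C^n_H(\mathcal{C}, k) = Hom_k(CN_n(\mathcal{C}), k) = CN^n(\mathcal{C})$, while each factor $S^{-1}(m_{(-1)})$ appearing in the face map $\delta_n$ and the cyclic operator $\tau_n$ of Proposition \ref{prop2.3} becomes the identity scalar. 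Hence the cocyclic module $C^\bullet_H(\mathcal{C}, k)$ coincides with $CN^\bullet(\mathcal{C})$, and likewise $C^\bullet_H(\mathcal{C} \otimes M_r(k), k)$ coincides with $CN^\bullet(\mathcal{C} \otimes M_r(k))$, so that both sides of the asserted isomorphism are identified with genuine cyclic cohomology groups.

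It then remains to observe that the comparison maps specialize correctly. Taking $p = 1$, the map $(inc_1, k)$ of \eqref{hocrq} becomes the ordinary inclusion $f^0 \otimes \cdots \otimes f^n \mapsto (f^0 \otimes E_{11}) \otimes \cdots \otimes (f^n \otimes E_{11})$, and $tr^k$ of \eqref{tracemap} becomes the usual trace map on cyclic nerves. Proposition \ref{moritainvcyc}(4) then yields mutually inverse isomorphisms $HC^\bullet(\mathcal{C}) \cong HC^\bullet(\mathcal{C} \otimes M_r(k))$, which is the desired statement. I do not anticipate a genuine obstacle here: the entire content lies in checking that the SAYD apparatus degenerates to the identity under the substitution $H = k = M$, so that the general Morita-type invariance already established in Proposition \ref{moritainvcyc} restricts to the classical statement for small $k$-linear categories.
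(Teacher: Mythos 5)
Your proposal is correct and follows exactly the paper's route: the paper's proof of this corollary is the one-line specialization $H=k=M$ in Proposition \ref{moritainvcyc}(4), with the identification $C^\bullet_H(\mathcal{C},k)=CN^\bullet(\mathcal{C})$ already recorded in the remark following Proposition \ref{prop2.3}. Your additional verification that the SAYD structure and the comparison maps degenerate correctly is a sound (if slightly more explicit) rendering of the same argument.
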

\begin{proof}
This follows by taking $H=k=M$ in Proposition \ref{moritainvcyc} (4).
\end{proof}

\begin{corollary}\label{5.8a}
For an  $n$-cocycle $\phi \in Z^n_H(\mathcal{D}_H,M)$, the $n$-cocycle $\tilde{\phi}=Hom_H(tr^M,k)(\phi) =\phi\circ tr^M\in Z^n_H(\mathcal{D}_H \otimes M_r(k),M)$  may be described as follows
\begin{equation*}
\tilde{\phi}\left(m \otimes (f^0 \otimes B^0) \otimes \ldots \otimes (f^n \otimes B^n)\right)=\phi(m \otimes f^0 \otimes \ldots \otimes f^n)trace(B^0\ldots B^n)
\end{equation*}
\end{corollary}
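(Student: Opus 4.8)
The plan is to deduce everything from the fact, established in Proposition \ref{moritainvcyc}(2), that $C^\bullet_H(tr^M) = Hom_H(C_\bullet(tr^M),k)$ is a morphism of cocyclic modules $C^\bullet_H(\mathcal{D}_H,M) \longrightarrow C^\bullet_H(\mathcal{D}_H \otimes M_r(k),M)$, together with the characterization of cocycles recorded in \eqref{3.3y}. The only genuine content is the formal principle that a morphism of cocyclic modules sends cocycles to cocycles.

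First I would recall that, since $k \supseteq \mathbb{Q}$, an element $\phi \in C^n_H(\mathcal{D}_H,M)$ lies in $Z^n_H(\mathcal{D}_H,M)$ precisely when $b(\phi) = 0$ and $(1-\lambda)(\phi) = 0$, where $b = \sum_{i=0}^{n+1}(-1)^i \delta_i$ and $\lambda = (-1)^n \tau_n$. Because $C^\bullet_H(tr^M)$ is a morphism of cocyclic modules, it commutes with all the coface and cyclic operators, and therefore with both $b$ and $1-\lambda$. Hence applying $C^n_H(tr^M)$ to $\phi$ produces $\tilde\phi = \phi \circ tr^M$ satisfying $b(\tilde\phi) = C^{n+1}_H(tr^M)(b(\phi)) = 0$ and $(1-\lambda)(\tilde\phi) = C^n_H(tr^M)\big((1-\lambda)(\phi)\big) = 0$. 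This shows $\tilde\phi \in Z^n_H(\mathcal{D}_H \otimes M_r(k), M)$. I should note that $\tilde\phi$ is automatically $H$-linear because $tr^M$ is $H$-linear by construction, so $\tilde\phi$ indeed lies in $C^n_H(\mathcal{D}_H \otimes M_r(k),M)$ rather than merely in $C^n(\mathcal{D}_H \otimes M_r(k),M)$.

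The explicit formula is then immediate: unwinding the definition \eqref{tracemap} of $tr^M$ and using $k$-linearity of $\phi$,
\begin{equation*}
\tilde\phi\big(m \otimes (f^0 \otimes B^0) \otimes \ldots \otimes (f^n \otimes B^n)\big) = \phi\big(tr^M(m \otimes (f^0 \otimes B^0) \otimes \ldots \otimes (f^n \otimes B^n))\big) = \phi(m \otimes f^0 \otimes \ldots \otimes f^n)\,\text{trace}(B^0 \ldots B^n).
\end{equation*}
I do not anticipate a real obstacle: all the substantive work has already been carried out in showing that $tr^M$ is $H$-linear and induces a morphism of cocyclic modules (Proposition \ref{moritainvcyc}(2)). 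The only point that requires a moment's attention is confirming the $H$-linearity of $\tilde\phi$, which is inherited directly from that of $tr^M$.
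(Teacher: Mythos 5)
Your proposal is correct and matches the paper's (implicit) reasoning: the corollary is stated without proof precisely because it follows from Proposition \ref{moritainvcyc}(2) — a morphism of cocyclic modules commutes with $b$ and $1-\lambda$, hence preserves cocycles in the sense of \eqref{3.3y} — together with the defining formula \eqref{tracemap} for $tr^M$. Your added remark on the $H$-linearity of $\tilde{\phi}$ being inherited from that of $tr^M$ is a correct and worthwhile detail.
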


\section{Vanishing cycles on an $H$-category and coboundaries}
From now onwards, we will always assume that $k=\mathbb C$. In this section, we will describe the spaces $B^\bullet_H(\mathcal{D}_H,M)$ and $B^\bullet(\mathcal{D}_H)$.  This will be done using the isomorphism $HC^\bullet_H(\mathcal{D}_H,M)\overset{\simeq}{\longrightarrow}HC^\bullet_H\left(\mathcal{D}_H \otimes M_r(k),M\right)$ proved in Section \ref{Morita}. 
Finally, we will use the formalism of categorified cycles and vanishing cycles developed in this paper to obtain a pairing on the cyclic cohomology of a $k$-linear category.

\smallskip We begin by recalling the notion of an inner automorphism of a category.

\begin{definition}\label{indefr} [see \cite{Sch}, p 24]
Let $\mathcal{D}_H$ be a left $H$-category. An automorphism $\Phi \in Hom_{Cat_H}(\mathcal{D}_H,\mathcal{D}_H)$ is said to be inner if $\Phi$ is isomorphic to the identity functor $id_{\mathcal{D}_H}$. In particular, there exist isomorphisms $\{\eta(X):X\longrightarrow \Phi(X)\}_{X\in Ob(\mathcal 
D_H)}$ such that   $\Phi(f)=\eta(Y)\circ f \circ (\eta(X))^{-1}$ for any $f \in Hom_{\mathcal{D}_H}(X,Y)$.
\end{definition}

We now set
\begin{equation}
\mathbb G(\mathcal D_H):=\underset{X\in Ob(\mathcal D_H)}{\prod}\textrm{ }Aut_{\mathcal D_H}(X)
\end{equation}
By definition, an element $\eta\in \mathbb G(\mathcal D_H)$ corresponds to a family of automorphisms $\{\eta(X):X\longrightarrow X\}_{X\in Ob(\mathcal 
D_H)}$. We now set
\begin{equation}
\mathbb U_H(\mathcal D_H):=\{\mbox{$\eta\in \mathbb G(\mathcal D_H)$ $\vert$ $h(\eta(X))=\varepsilon(h)\eta(X)$ for every $h\in H$ and $X\in Ob(\mathcal D_H)$}\}
\end{equation}

\begin{lemma} $\mathbb U_H(\mathcal D_H)$ is a subgroup of $\mathbb G(\mathcal D_H)$. 
\end{lemma}
\begin{proof}
The element $\mathbf{e}=\underset{X\in Ob(\mathcal D_H)}{\prod} id_X$ is the identity of the group $\mathbb G(\mathcal D_H)$.
By definition of an $H$-category, we know that $h\cdot id_X=\varepsilon(h)\cdot id_X$ for each $X \in Ob(\mathcal{D}_H)$ and $h \in H$. Thus, $\mathbf{e} \in \mathbb U_H(\mathcal D_H)$. Now, suppose that $\eta, \eta' \in \mathbb U_H(\mathcal D_H)$. Then, for each $X \in Ob(\mathcal{D}_H)$ and $h \in H$,
\begin{equation*}
h\left((\eta \circ \eta')(X)\right)=h(\eta(X) \circ \eta'(X))=(h_1\eta(X)) \circ (h_2\eta'(X))=(\varepsilon(h_1)\eta(X)) \circ (\varepsilon(h_2)\eta'(X))=\varepsilon(h)(\eta(X) \circ \eta'(X))
\end{equation*}
Hence, $\eta \circ \eta' \in \mathbb U_H(\mathcal D_H)$. 

\smallskip
Now, let $\eta \in \mathbb U_H(\mathcal D_H)$. Then, $\eta^{-1} \in \mathbb G(\mathcal D_H)$ corresponds to a family of morphisms $\{\eta^{-1}(X):=\eta(X)^{-1}:X \longrightarrow X\}_{X \in Ob(\mathcal{D}_H)}$. Then, for each $h \in H$ and $X \in Ob(\mathcal{D}_H)$,
\begin{equation*}
\varepsilon(h)id_X=h(\eta(X) \circ \eta^{-1}(X))=(\varepsilon(h_1)\eta(X)) \circ (h_2 \eta^{-1}(X))=\eta(X) \circ (h \eta^{-1}(X))
\end{equation*}
which gives $\varepsilon(h)\eta^{-1}(X)=h \eta^{-1}(X)$. Therefore, $\eta^{-1} \in  \mathbb U_H(\mathcal D_H)$.
\end{proof}

\begin{lemma}\label{innerautonew}
 Let $\mathcal{D}_H$ be a left $H$-category and let $\eta\in \mathbb U_H(\mathcal D_H)$.

\smallskip
(1) Consider $\Phi_\eta:\mathcal D_H\longrightarrow \mathcal D_H$ defined by 
\begin{equation*}\Phi_\eta(X)=X\qquad \Phi_\eta(f):=\eta(Y)\circ f\circ\eta(X)^{-1}
\end{equation*} for every $X\in Ob(\mathcal D_H)$ and $f\in Hom_{\mathcal D_H}(X,Y)$. Then, $\Phi_\eta: \mathcal D_H\longrightarrow \mathcal D_H$
is an  inner automorphism of $\mathcal{D}_H$.

\smallskip
(2) Consider $\tilde\Phi_\eta: \mathcal D_H\otimes M_2(k)\longrightarrow \mathcal D_H\otimes M_2(k)$ defined by
\begin{equation*}
\tilde\Phi_\eta(X)=X \qquad \tilde\Phi_\eta(f\otimes B)=(id_Y\otimes E_{11}+  \eta(Y)\otimes E_{22})\circ (f\otimes B)\circ (id_X\otimes E_{11}+  \eta(X)^{-1}\otimes E_{22})
\end{equation*}  for every $X\in Ob(\mathcal D_H\otimes M_2(k))=Ob(\mathcal D_H)$ and $f\otimes B \in Hom_{\mathcal D_H\otimes M_2(k)}(X,Y)$. Then,
$\tilde\Phi_\eta: \mathcal D_H\otimes M_2(k)\longrightarrow \mathcal D_H\otimes M_2(k)$ is an  inner automorphism.
\end{lemma}
\begin{proof}
{\it (1)}  Using the fact that $\eta, \eta^{-1} \in \mathbb U_H(\mathcal D_H)$, we have
\begin{equation*}
\begin{array}{ll}
h(\Phi_\eta(f))=(h_1\eta(Y)) \circ (h_2f) \circ (h_3\eta(X)^{-1})&=(\varepsilon(h_1)\eta(Y)) \circ (h_2f) \circ (\varepsilon(h_3)\eta(X)^{-1})\\
&=\eta(Y)  \circ (h_1f) \circ (\varepsilon(h_2)\eta(X)^{-1})\\
&=\eta(Y)\circ (hf)\circ\eta(X)^{-1}
\end{array}
\end{equation*}
for any $h \in H$ and $f\in Hom_{\mathcal D_H}(X,Y)$. By Definition \ref{indefr}, we now see that $\Phi_\eta$ is an inner automorphism.

\smallskip
{\it (2)}  Setting $\tilde{\eta}(X):X \longrightarrow X$ in $\mathcal D_H\otimes M_2(k)$ as $\tilde{\eta}(X)=id_X\otimes E_{11}+  \eta(X)\otimes E_{22}$, we see that
\begin{equation*}
\begin{array}{ll}
\tilde\Phi_\eta(f\otimes B)&=(id_Y\otimes E_{11}+  \eta(Y)\otimes E_{22})\circ (f\otimes B)\circ (id_X\otimes E_{11}+  \eta(X)^{-1}\otimes E_{22})\\
&=\tilde{\eta}(Y) \circ (f\otimes B)\circ \tilde{\eta}(X)^{-1}
\end{array}
\end{equation*}
for any  $f\otimes B \in Hom_{\mathcal D_H\otimes M_2(k)}(X,Y)$.
Considering the $H$-action on the category $\mathcal D_H\otimes M_2(k)$, we have
\begin{equation*}
\begin{array}{ll}
h\left(\tilde{\Phi}_\eta((f \otimes B)\right)&=h_1(id_Y\otimes E_{11}+  \eta(Y)\otimes E_{22})\circ h_2(f\otimes B)\circ h_3(id_X\otimes E_{11}+  \eta(X)^{-1}\otimes E_{22})\\
&=(h_1id_Y\otimes E_{11}+  h_1\eta(Y)\otimes E_{22})\circ h_2(f\otimes B)\circ (h_3id_X\otimes E_{11}+  h_3\eta(X)^{-1}\otimes E_{22})\\
&=\varepsilon(h_1)(id_Y\otimes E_{11}+  \eta(Y)\otimes E_{22})\circ h_2(f\otimes B)\circ \varepsilon(h_3)(id_X\otimes E_{11}+  \eta(X)^{-1}\otimes E_{22})\\
&=(id_Y\otimes E_{11}+  \eta(Y)\otimes E_{22})\circ h(f\otimes B)\circ (id_X\otimes E_{11}+  \eta(X)^{-1}\otimes E_{22})\\
&=\tilde{\Phi}_\eta(h(f \otimes B))
\end{array}
\end{equation*}
for any $h \in H$ and $f\otimes B \in Hom_{\mathcal D_H\otimes M_2(k)}(X,Y)$. By Definition \ref{indefr}, we now see that $\Phi_{\tilde\eta}$ is an inner automorphism.
\end{proof}

For $\eta\in \mathbb U_H(\mathcal D_H)$, we will always denote by $\Phi_\eta$ and $\tilde{\Phi}_\eta$ the inner automorphisms defined in Lemma \ref{innerautonew}.

\begin{lemma}\label{6.3a} Let $M$ be a right-left SAYD module over $H$. Then, 

\smallskip
(1) A semifunctor $\alpha \in Hom_{\overline{Cat}_H}(\mathcal{D}_H,\mathcal{D}_H')$ induces a morphism (for all $n\geq 0$)
\begin{equation*}
C^n_H(\alpha,M):C^n_H(\mathcal{D}_H',M)=Hom_H(M \otimes CN_n(\mathcal{D}_H'),k)\longrightarrow C^n_H(\mathcal{D}_H,M)=Hom_H(M \otimes CN_n(\mathcal{D}_H),k)
\end{equation*}  determined by
\begin{equation*}
C^{n}_H(\alpha,M)(\phi)(m \otimes f^0 \otimes \ldots \otimes f^n)=\phi\left(m \otimes \alpha(f^0) \otimes \ldots \otimes \alpha(f^n)\right)
\end{equation*}
for any $\phi \in C^n_H(\mathcal{D}_H',M)$, $m \in M$ and $f^0 \otimes \ldots \otimes f^n \in CN_n(\mathcal{D}_H)$. This leads to a morphism  $C^{\bullet\bullet}_H(\alpha,M)^{cy}:C^{\bullet\bullet}_H(\mathcal{D}_H',M)^{cy} \longrightarrow C^{\bullet\bullet}_H(\mathcal{D}_H,M)^{cy}$ of double complexes and induces  a functor $HC^\bullet_H(-,M):\overline{Cat}_H^{op} \longrightarrow Vect_k$.

\smallskip
(2) Let $\eta\in \mathbb U_H(\mathcal D_H)$. Then, $\Phi_\eta$  induces the identity map on $HC^\bullet_H(\mathcal{D}_H,M)$.
\end{lemma}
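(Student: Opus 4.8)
The plan is to treat the two parts in turn: part (1) is a string of direct verifications, while the real content is the homotopy argument in part (2).

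\smallskip
\emph{Part (1).} Fix $\phi\in C^n_H(\mathcal D_H',M)$. First I would check that $C^n_H(\alpha,M)(\phi)$ is again $H$-linear. Since the $H$-action on $CN_n$ is diagonal and $\alpha$ is $H$-linear (so $\alpha(S(h_j)f^i)=S(h_j)\alpha(f^i)$), one computes
\[
\begin{aligned}
C^n_H(\alpha,M)(\phi)\big(mh_1\otimes S(h_2)(f^0\otimes\cdots\otimes f^n)\big)
&=\phi\big(mh_1\otimes S(h_2)(\alpha(f^0)\otimes\cdots\otimes\alpha(f^n))\big)\\
&=\varepsilon(h)\,C^n_H(\alpha,M)(\phi)(m\otimes f^0\otimes\cdots\otimes f^n),
\end{aligned}
\]
using \eqref{new3.2} for $\phi$. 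Next I would verify that $C^\bullet_H(\alpha,M)$ commutes with each coface $\delta_i$ and with the cyclic operator $\tau_n$: for $0\le i\le n-1$ this uses only that $\alpha$ preserves composition, $\alpha(f^if^{i+1})=\alpha(f^i)\alpha(f^{i+1})$, while for the twisted operators $\delta_n$ and $\tau_n$ (which carry the factor $m\mapsto m_{(0)}\otimes S^{-1}(m_{(-1)})(\blank)$) one also invokes $\alpha(S^{-1}(m_{(-1)})f^n)=S^{-1}(m_{(-1)})\alpha(f^n)$. Since the cyclic bicomplex $C^{\bullet\bullet}_H(\blank,M)^{cy}$ is built solely from $b=\sum_i(-1)^i\delta_i$, $b'$, $1-\lambda$ and $N=\sum_i\lambda^i$ (with $\lambda=(-1)^n\tau_n$)---equivalently, since $\mathbb Q\subseteq k$ one may use Connes' $b$-$\lambda$ complex---no degeneracy enters, which is essential because $\alpha$, being a semifunctor, need not preserve identities. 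Thus $C^{\bullet\bullet}_H(\alpha,M)^{cy}$ is a morphism of double complexes; it is visibly contravariant in $\alpha$ and sends identity semifunctors to identities, yielding the functor $HC^\bullet_H(\blank,M)\colon\overline{Cat}_H^{op}\to Vect_k$.

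\smallskip
\emph{Part (2).} Let $\Phi$ be inner, with natural isomorphism $\eta=\{\eta(X)\colon X\to\Phi(X)\}$ satisfying $\Phi(f)=\eta(Y)\circ f\circ\eta(X)^{-1}$. Comparing $\Phi(hf)=h\Phi(f)$ with the conjugation formula and expanding $h\big(\eta(Y)f\eta(X)^{-1}\big)=(h_1\eta(Y))(h_2f)(h_3\eta(X)^{-1})$ via Definition \ref{defH-cat}(iii), one checks that the $H$-linearity of $\Phi$ forces the components to be $H$-invariant, $h\cdot\eta(X)=\varepsilon(h)\eta(X)$ (and likewise for $\eta(X)^{-1}$). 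By part (1) both $C^\bullet_H(\Phi,M)$ and $\mathrm{id}=C^\bullet_H(\mathrm{id}_{\mathcal D_H},M)$ are endomorphisms of the cyclic bicomplex, so it suffices to exhibit a chain homotopy between them. I would work on the predual chain side $C^H_\bullet(\mathcal D_H,M)=M\otimes_H CN_\bullet(\mathcal D_H)$, whose $Hom_H(\blank,k)$-dual computes $HC^\bullet_H(\mathcal D_H,M)$ (as in Proposition \ref{moritainvcyc}); there $\Phi$ acts by $m\otimes f^0\otimes\cdots\otimes f^n\mapsto m\otimes\Phi(f^0)\otimes\cdots\otimes\Phi(f^n)$, i.e. by conjugation by the invertible invariant loop $\eta$.

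\smallskip
The plan for the homotopy is to insert the morphisms $\eta(X_i)^{\pm1}$ into the cyclic-nerve tensors, defining operators $h_j\colon C^H_n\to C^H_{n+1}$ in the spirit of the matrix homotopy $\hbar$ of Proposition \ref{homotopy}, and to show that $h=\sum_j(-1)^jh_j$ satisfies $bh+hb=\Phi_\bullet-\mathrm{id}$; the $H$-invariance of $\eta$ makes each $h_j$ well defined on $M\otimes_H(\blank)$ and $H$-linear. Dualizing by $Hom_H(\blank,k)$ then gives $C^\bullet_H(\Phi,M)\sim\mathrm{id}$, whence $HC^\bullet_H(\Phi,M)=\mathrm{id}$. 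The main obstacle is precisely this last construction: promoting the classical Hochschild-level statement ``conjugation by an invertible element is homotopic to the identity'' to the full cyclic bicomplex while keeping the homotopy $H$-linear and compatible with the SAYD-twisted cyclic operator $t_n$ (with its $m_{(0)}\otimes S^{-1}(m_{(-1)})(\blank)$ twist). Arranging the signs and the placements of $\eta^{\pm1}$ so that the simplicial-homotopy identities hold and survive passage to $\blank\otimes_H\blank$ is the heart of the argument; by contrast every claim in part (1) reduces to a one-line check.
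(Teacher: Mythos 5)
Your part (1) is essentially the verification the paper leaves implicit (``since $\phi$ and $\alpha$ are $H$-linear, the maps are well defined and compatible with the structure maps''), and your observation that one must avoid the degeneracies because a semifunctor need not preserve identities is correct and worth making. The divergence is in part (2), and there it does not close. The paper constructs no homotopy for conjugation at all: it deduces (2) formally from Proposition \ref{moritainvcyc}. Since $tr^M\circ(inc_2,M)=id$ and $HC^{\bullet}_H(tr^M)$ is already known to be an isomorphism with inverse $HC^{\bullet}_H(inc_1,M)$, one gets $HC^{\bullet}_H(inc_1,M)=HC^{\bullet}_H(inc_2,M)$; an $H$-linear automorphism $\alpha$ of $\mathcal{D}_H\otimes M_2(k)$ satisfying $\alpha\circ inc_1=inc_1$ and $\alpha\circ inc_2=inc_2\circ\Phi$ then forces $HC^{\bullet}_H(\Phi,M)=id$. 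The only explicit homotopy ever needed is the Hochschild-level one of Proposition \ref{homotopy}, which is already proved; the passage to cyclic cohomology is made by the spectral sequence, not by a cyclic homotopy.

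Your route instead requires an operator $h$ with $bh+hb=\Phi_{\bullet}-id$ that is simultaneously $H$-linear, well defined on $M\otimes_H CN_{\bullet}(\mathcal{D}_H)$, and compatible with the SAYD-twisted cyclic operator, and you explicitly defer its construction (``the heart of the argument''). That deferred step is the whole content of the statement, and it is not routine: inserting $\eta^{\pm 1}$ yields at best a pre-simplicial homotopy, which settles the Hochschild case but does not interact well with $t_n$; this is exactly why the classical proofs (Connes, Loday) resort to the $2\times 2$ matrix trick rather than a cyclic homotopy. Moreover, your preliminary claim that $H$-linearity of $\Phi$ forces $h\cdot\eta(X)=\varepsilon(h)\eta(X)$ is unjustified and false in general: take $\mathcal{D}_H$ to be a unital $H$-module algebra and $\eta=u$ a central invertible element that is not $H$-invariant; then $\Phi=id$ is $H$-linear while $u$ is not invariant. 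Without invariance of $\eta$ your operators $h_j$ are not even well defined on $M\otimes_H CN_{\bullet}(\mathcal{D}_H)$. So part (2) as proposed has a genuine gap; the efficient repair is to reduce to Proposition \ref{moritainvcyc} as the paper does.
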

\begin{proof}
(1) Since $\phi$ and $\alpha$ are $H$-linear, the morphisms $C^n_H(\alpha,M)$ are well-defined and well behaved with respect to the maps appearing in the Hochschild
and cyclic complexes. The result follows.

\smallskip
(2) Let $\eta\in \mathbb U_H(\mathcal D_H)$ and $\Phi_\eta \in Hom_{Cat_H}(\mathcal{D}_H,\mathcal{D}_H)$ be the corresponding inner automorphism. By Proposition \ref{moritainvcyc}, the maps $HC^\bullet_H(inc_1,M)$ and $HC^\bullet_H(tr^M)$ are mutually inverse isomorphisms of Hopf-cyclic cohomology groups. Thus, we have
\begin{equation}\label{dd6.1}
HC^\bullet_H(inc_2,M) \circ \left(HC^\bullet_H(inc_1,M)\right)^{-1} =HC^\bullet_H(inc_2,M)  \circ HC^\bullet_H(tr^M)= HC^\bullet_H\left(tr^M \circ (inc_2,M)\right)=id
\end{equation} 
Further, we have the following commutative diagram in the category $\overline{Cat}_H$:
\begin{equation}\label{d6.1}
\begin{CD}
\mathcal{D}_H @>inc_1>>  \mathcal{D}_H \otimes M_2(k) @<inc_2<< \mathcal{D}_H\\
@Vid_{\mathcal{D}_H}VV        @VV \tilde\Phi_\eta V @VV \Phi_\eta V\\
\mathcal{D}_H    @> inc_1>>\mathcal{D}_H \otimes M_2(k)@<inc_2<< \mathcal{D}_H\\
\end{CD}
\end{equation}
Thus, by applying the functor $HC^\bullet_H(-,M)$ to the commutative diagram \eqref{d6.1} and using \eqref{dd6.1}, we obtain
\begin{align*}
HC^\bullet_H(\Phi_\eta,M)&=  \left(HC^\bullet_H(inc_2,M)\right)\circ HC^\bullet_H(inc_1,M)^{-1} \circ HC^\bullet_H(id_{\mathcal{D}_H},M)\circ \left(HC^\bullet_H(inc_1,M)\right) \circ HC^\bullet_H(inc_2,M)^{-1}\\
&=id_{HC^\bullet_H(\mathcal{D}_H,M)}
\end{align*} 
\end{proof}

\begin{prop}\label{vanishing}
Let $\mathcal{D}_H$ be a left $H$-category. Suppose that there is a semifunctor $\upsilon \in Hom_{\overline{Cat}_H}(\mathcal{D}_H,\mathcal{D}_H)$ and an $\eta \in \mathbb U_H\left(\mathcal{D}_H \otimes M_2(k)\right)$ such that 
\begin{itemize}
\item[(1)] $\upsilon(X)=X \qquad \forall X \in Ob(\mathcal{D}_H)$
\item[(2)] $\Phi_\eta(f \otimes E_{11}+\upsilon(f) \otimes E_{22})=\upsilon(f) \otimes E_{22}$
\end{itemize}
for all $f \in Hom_{\mathcal{D}_H}(X,Y)$ and  $X,Y \in Ob(\mathcal{D}_H)$. Then, $HC^\bullet_H(\mathcal{D}_H,M)=0$.
\end{prop}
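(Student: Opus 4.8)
The plan is to mimic Connes' classical vanishing argument, exploiting the Morita-type isomorphisms of Section \ref{Morita} together with the fact (Lemma \ref{6.3a}(2)) that inner automorphisms act trivially on $HC^\bullet_H(-,M)$. First I would introduce the auxiliary semifunctor
\[
\beta:\mathcal{D}_H\longrightarrow \mathcal{D}_H\otimes M_2(k),\qquad \beta(f):=f\otimes E_{11}+\upsilon(f)\otimes E_{22}.
\]
Using $E_{11}E_{22}=E_{22}E_{11}=0$ together with the fact that $\upsilon$ is an $H$-linear semifunctor, a direct check shows $\beta(gf)=\beta(g)\beta(f)$ and $\beta(hf)=h\beta(f)$, so $\beta\in Hom_{\overline{Cat}_H}(\mathcal{D}_H,\mathcal{D}_H\otimes M_2(k))$. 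By hypothesis (3) one has $\Phi\circ\beta=inc_2\circ\upsilon$ as semifunctors, since $(\Phi\circ\beta)(f)=\Phi(f\otimes E_{11}+\upsilon(f)\otimes E_{22})=\upsilon(f)\otimes E_{22}=(inc_2\circ\upsilon)(f)$.

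The second step is the key chain-level identity. On the para-cyclic modules $C_\bullet(-,M)$ of Proposition \ref{P6.2wp}, I would compute $tr^M\circ\beta_*$, where $\beta_*(m\otimes f^0\otimes\cdots\otimes f^n)=m\otimes\beta(f^0)\otimes\cdots\otimes\beta(f^n)$. Expanding each $\beta(f^i)$ into its two summands yields $2^{n+1}$ terms indexed by a choice of $E_{11}$ or $E_{22}$ in each slot; the defining trace formula \eqref{tracemap} kills every term except the all-$E_{11}$ and all-$E_{22}$ ones, because $\mathrm{trace}(E_{i_0i_0}\cdots E_{i_ni_n})\neq 0$ forces $i_0=\cdots=i_n$. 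Both surviving traces equal $1$, giving
\[
tr^M\circ\beta_*=\mathrm{id}+\upsilon_*
\]
as morphisms of para-cyclic modules $C_\bullet(\mathcal{D}_H,M)\to C_\bullet(\mathcal{D}_H,M)$, where $\upsilon_*$ is the map induced by $\upsilon$.

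Finally I would assemble these on cohomology via the contravariant functor $Hom_H(-,k)$ of Lemma \ref{6.3a}. Dualizing the chain identity gives $HC^\bullet_H(\beta,M)\circ HC^\bullet_H(tr^M)=\mathrm{id}+HC^\bullet_H(\upsilon,M)$. Applying $HC^\bullet_H(-,M)$ to $\Phi\circ\beta=inc_2\circ\upsilon$ and using $HC^\bullet_H(\Phi,M)=\mathrm{id}$ (Lemma \ref{6.3a}(2), since $\Phi$ is inner) gives $HC^\bullet_H(\beta,M)=HC^\bullet_H(\upsilon,M)\circ HC^\bullet_H(inc_2,M)$. Substituting this into the previous relation and invoking $HC^\bullet_H(inc_2,M)\circ HC^\bullet_H(tr^M)=\mathrm{id}$, which is exactly the identity recorded in \eqref{dd6.1} (a consequence of Proposition \ref{moritainvcyc}), the left-hand side collapses to $HC^\bullet_H(\upsilon,M)$. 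Hence $HC^\bullet_H(\upsilon,M)=\mathrm{id}+HC^\bullet_H(\upsilon,M)$, forcing $\mathrm{id}=0$ on $HC^\bullet_H(\mathcal{D}_H,M)$, so that $HC^\bullet_H(\mathcal{D}_H,M)=0$.

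I expect the main obstacle to be the bookkeeping in the chain-level identity $tr^M\circ\beta_*=\mathrm{id}+\upsilon_*$: one must verify that $\beta_*$, $tr^M$ and $\upsilon_*$ are all genuine morphisms of para-cyclic modules, so that the identity descends through the Hochschild-to-cyclic bicomplex to $HC^\bullet_H$, and one must track the SAYD coaction carefully in the cyclic operator $t_n$. The algebraic core — the cancellation of all mixed $E_{11}/E_{22}$ terms under the trace — is straightforward once the orthogonality of the idempotents is used.
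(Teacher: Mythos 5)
Your proposal is correct and follows essentially the same route as the paper: the paper's proof uses the same two semifunctors $\alpha(f)=f\otimes E_{11}+\upsilon(f)\otimes E_{22}$ and $\alpha'=inc_2\circ\upsilon=\Phi\circ\alpha$, the triviality of inner automorphisms on $HC^\bullet_H(-,M)$, and the identity $HC^\bullet_H(inc_2,M)\circ HC^\bullet_H(tr^M)=\mathrm{id}$ from \eqref{dd6.1}. The only difference is presentational — the paper evaluates on a representative cocycle $\tilde\phi=\phi\circ tr^M$ and exhibits $\phi$ as the coboundary $\tilde\phi\circ CN_n(\alpha)-\tilde\phi\circ CN_n(\alpha')$, whereas you package the same computation as the operator identity $\mathrm{id}+HC^\bullet_H(\upsilon,M)=HC^\bullet_H(\upsilon,M)$.
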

\begin{proof}
Let $\alpha, \alpha' \in Hom_{\overline{Cat}_H}\left(\mathcal{D}_H, \mathcal{D}_H \otimes M_2(k)\right)$ be the semifunctors defined by
\begin{equation*}
\begin{array}{c}
\alpha(X):=X \qquad \alpha(f):=f \otimes E_{11} + \upsilon(f) \otimes E_{22}\\
\alpha'(X):=X \qquad \alpha'(f):=\upsilon(f) \otimes E_{22}
\end{array}
\end{equation*}
for all $X \in Ob(\mathcal{D}_H)$ and $f \in Hom_ {\mathcal{D}_H}(X,Y)$. Then, by assumption, $\alpha'=\Phi_\eta \circ \alpha$. Therefore, applying the functor $HC^\bullet_H(-,M)$ and using Lemma \ref{6.3a} (2), we get
\begin{equation}\label{cohomclass}
HC^\bullet_H(\alpha',M)=HC^\bullet_H(\alpha,M) \circ HC^\bullet_H(\Phi_\eta,M)=HC^\bullet_H(\alpha,M):HC^\bullet_H(\mathcal D_H\otimes M_2(k),M)\longrightarrow HC^\bullet_H(\mathcal D_H,M)
\end{equation}
Let $\phi \in Z^n_H(\mathcal{D}_H,M)$ and $\tilde{\phi}=Hom_H(tr^M,k)(\phi) =\phi\circ tr^M\in Z^n_H(\mathcal{D}_H \otimes M_2(k),M)$ as in Corollary \ref{5.8a}. Let $[\tilde{\phi}]$ denote the cohomology class of $\tilde{\phi}$. Then, by \eqref{cohomclass}, we have $HC^\bullet_H(\alpha,M)([\tilde{\phi}])=HC^\bullet_H(\alpha',M)([\tilde{\phi}])$, i.e.,
\begin{equation}
\tilde{\phi}\circ (id_M\otimes CN_n(\alpha)) + B^n_H(\mathcal{D}_H,M)=\tilde{\phi}\circ  (id_M\otimes CN_n(\alpha'))  + B^n_H(\mathcal{D}_H,M)
\end{equation}
so that $\tilde{\phi}\circ (id_M\otimes CN_n(\alpha)) - \tilde{\phi}\circ (id_M\otimes CN_n(\alpha')) \in B^n_H(\mathcal{D}_H,M)$.
Applying the definition of $\tilde{\phi}$, we now have
\begin{equation*}
\begin{array}{ll}
&(\tilde{\phi}\circ (id_M\otimes CN_n(\alpha))) (m \otimes f^0 \otimes \ldots \otimes f^n)\\
&\quad =\tilde{\phi}\left(m \otimes \alpha(f^0) \otimes \ldots \otimes \alpha(f^n)\right)\\
& \quad = \tilde{\phi}\left(m \otimes (f^0 \otimes E_{11} + \upsilon(f^0) \otimes E_{22}) \otimes \ldots \otimes (f^n \otimes E_{11} + \upsilon(f^n) \otimes E_{22})\right)\\
& \quad = \phi(m \otimes f^0 \otimes \ldots \otimes f^n)+\phi(m \otimes \upsilon(f^0) \otimes \ldots \otimes \upsilon(f^n))
\end{array}
\end{equation*}
Similarly, $(\tilde{\phi}\circ (id_M\otimes CN_n(\alpha'))) (m \otimes f^0 \otimes \ldots \otimes f^n)=\phi(m \otimes \upsilon(f^0) \otimes \ldots \otimes \upsilon(f^n))$. Thus, $\phi=\tilde{\phi}\circ (id_M\otimes CN_n(\alpha)) - \tilde{\phi}\circ (id_M\otimes CN_n(\alpha'))\in B^n_H(\mathcal{D}_H,M)$. This proves the result.
\end{proof}

In particular, substituting $M=k=H$ in Proposition \ref{vanishing}, we obtain the following result:

\begin{corollary}\label{cor6.4x}
Let $\mathcal{C}$ be a small $k$-linear category. Suppose that there is a $k$-linear semifunctor $\nu :\mathcal{C} \longrightarrow \mathcal{C}$ and an $\eta \in \mathbb{U}(\mathcal{C}\otimes M_2(k))$ such that 
\begin{itemize}
\item[(1)] $\nu(X)=X \qquad \forall X \in Ob(\mathcal{C})$
\item[(2)] $\Phi_\eta(f \otimes E_{11}+\nu(f) \otimes E_{22})=\nu(f) \otimes E_{22}$
\end{itemize}
for all $f \in Hom_{\mathcal{C}}(X,Y)$ and $X,Y \in Ob(\mathcal{C})$. Then, $HC^\bullet(\mathcal{C})=0$.
\end{corollary}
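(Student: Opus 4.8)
The plan is to deduce the Corollary as the special case $H=k=M$ of Proposition \ref{vanishing}, as anticipated by the sentence preceding the statement. I would begin by recalling that $k$ is itself a Hopf algebra, with $\Delta(1)=1\otimes 1$ and $S(1)=\varepsilon(1)=1$, and a SAYD module over itself; and that, by the Remark following Proposition \ref{prop2.3}, substituting $H=k=M$ in the cocyclic module $C^\bullet_H(\mathcal{D}_H,M)$ recovers the ordinary cocyclic module computing $HC^\bullet(\mathcal{C})$. Under this substitution a left $H$-category is nothing but a small $k$-linear category $\mathcal C$, the categories $\overline{Cat}_H$ and $Cat_H$ reduce to small $k$-linear categories with semifunctors and with functors respectively, and the notion of inner automorphism is unchanged. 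In particular $HC^\bullet_H(\mathcal{D}_H,M)$ becomes exactly $HC^\bullet(\mathcal{C})$.

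Next I would match the hypotheses term by term. The semifunctor $\upsilon\in Hom_{\overline{Cat}_H}(\mathcal D_H,\mathcal D_H)$ becomes the $k$-linear semifunctor $\nu:\mathcal C\longrightarrow\mathcal C$, and the inner automorphism $\Phi$ of $\mathcal D_H\otimes M_2(k)$ becomes the inner automorphism $\kappa$ of $\mathcal C\otimes M_2(k)$. Once the $H$-action is trivial, the action $hf$ collapses to $\varepsilon(h)f=f$ and every Sweedler component disappears, so conditions (1)--(3) of Proposition \ref{vanishing} are precisely conditions (1)--(3) of the Corollary. Proposition \ref{vanishing} then gives $HC^\bullet_H(\mathcal D_H,M)=0$, which is the asserted vanishing $HC^\bullet(\mathcal C)=0$.

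I do not anticipate any genuine obstacle: all the work is contained in Proposition \ref{vanishing}, and what remains is only to check that the $H$-linear, SAYD-coefficient data degenerate cleanly to the plain $k$-linear setting. The single point worth making explicit is the identification $HC^\bullet_H(\mathcal D_H,k)=HC^\bullet(\mathcal C)$, which is what transports the vanishing statement to ordinary cyclic cohomology.
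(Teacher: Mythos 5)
Your proposal is correct and coincides with the paper's own derivation: the corollary is obtained precisely by substituting $H=k=M$ into Proposition \ref{vanishing}, with the observation that $HC^\bullet_H(\mathcal{D}_H,k)$ then reduces to the ordinary cyclic cohomology $HC^\bullet(\mathcal{C})$. Your explicit term-by-term matching of the hypotheses is a harmless elaboration of what the paper leaves implicit.
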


\begin{definition}\label{def6.5}
Let $(\mathcal{S}_H,\hat{\partial}_H, \hat{\mathscr{T}}^H)$ be an $n$-dimensional $\mathcal{S}_H$-cycle with coefficients in a SAYD module  $M$ over $H$ (see, Definition \ref{cycle}). Then, we say that the cycle $(\mathcal{S}_H,\hat{\partial}_H, \hat{\mathscr{T}}^H)$ is vanishing if  $\mathcal{S}^0_H$ is a left $H$-category and $\mathcal S^0_H$ satisfies the assumptions in Proposition \ref{vanishing}.
\end{definition}

By taking $H=k=M$ in Definition \ref{def6.5}, we obtain the notion of a vanishing $\mathcal S$-cycle $(\mathcal{S},\hat\partial,\hat T)$ associated to a $k$-linear DG-semicategory $\mathcal S$. 

\smallskip
We now recall from \cite[p103]{C2}  the algebra $\mathbf C$ of infinite matrices $(a_{ij})_{i,j\in \mathbb N}$ with entries from $\mathbb C$ satisfying the following conditions
 (see also \cite{kv})
\begin{itemize}
\item[(i)] the set $\{\mbox{$a_{ij}$ $\vert$ $i,j\in \mathbb N$}\}$ is finite,
\item[(ii)] the number of non-zero entries in each row or each column is bounded.
\end{itemize}

Identifying $M_2(\mathbf{C})=\mathbf{C} \otimes M_2(\mathbb C)$, we recall the following result from \cite[p104]{C2}:
\begin{lemma}\label{6.6r}
There exists an algebra homomorphism $\omega:\mathbf{C} \longrightarrow \mathbf{C}$ and an invertible element $\tilde U \in M_2(\mathbf{C})$ such that the corresponding inner automorphism $\Xi:M_2(\mathbf{C}) \longrightarrow M_2(\mathbf{C})$ satisfies
\begin{equation}\label{6.6pqe}
\Xi(B \otimes E_{11}+\omega(B) \otimes E_{22})=\omega(B) \otimes E_{22} \qquad \forall B \in \mathbf{C}
\end{equation}
Then, $HC^\bullet(\mathbf{C})=0$.
\end{lemma}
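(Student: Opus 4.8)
The plan is to produce the pair $(\omega,\Xi)$ by an Eilenberg-swindle construction and then to read off $HC^\bullet(\mathbf C)=0$ directly from Corollary \ref{cor6.4x}. Since the identity matrix has exactly one nonzero entry (equal to $1$) in each row and column, it lies in $\mathbf C$, so $\mathbf C$ is unital and I may speak of invertible elements. I view the elements of $\mathbf C$ as formal matrices indexed by $\mathbb N$ and fix a family of isometries $\{T_n\}_{n\geq 0}$ in $\mathbf C$, i.e. matrices with entries in $\{0,1\}$ having a single $1$ in each column and at most one $1$ in each row, chosen so that their ranges are pairwise orthogonal and partition $\mathbb N$; concretely these arise from a bijection $\mathbb N\cong\mathbb N\times\mathbb N_{\geq 0}$. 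They satisfy $T_n^\ast T_m=\delta_{nm}1$ and $\sum_{n\geq 0}T_nT_n^\ast=1$, and each $T_n$ obviously meets conditions (i) and (ii).

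First I would define $\omega(B):=\sum_{n\geq 1}T_nBT_n^\ast$, the block-diagonal matrix carrying a copy of $B$ on each block $n\geq 1$ and the zero block on $n=0$. Using $T_n^\ast T_m=\delta_{nm}1$ one checks at once that $\omega$ is an (additive and multiplicative) algebra homomorphism, and the entries of $\omega(B)$ lie in the same finite set as those of $B$ while the number of nonzero entries in each row and column is unchanged, so $\omega$ indeed maps $\mathbf C$ into $\mathbf C$. The decisive point is that $\omega$ is \emph{not} unital: $\omega(1)=1-T_0T_0^\ast$, so the zero block $T_0T_0^\ast$ is an infinite-rank projection. Next I would build the inner automorphism $\Xi$. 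Writing $\alpha(B):=B\otimes E_{11}+\omega(B)\otimes E_{22}$ and $\alpha'(B):=\omega(B)\otimes E_{22}$ as two homomorphisms $\mathbf C\to M_2(\mathbf C)$, the task is to find an invertible $g\in M_2(\mathbf C)$ with $g\,\alpha(B)\,g^{-1}=\alpha'(B)$ for all $B$, and then set $\Xi:=g(-)g^{-1}$; this is precisely the displayed identity \eqref{6.6pqe}. Decomposing the ambient space $\mathbb C^2\otimes\ell^2(\mathbb N)$ along the blocks $E_{11},E_{22}$ and the $T_n$, both $\alpha$ and $\alpha'$ split as a direct sum of copies of the defining representation of $\mathbf C$ together with a zero representation: $\alpha$ has the defining representation with (countably infinite) multiplicity indexed by $\{-1\}\cup\{1,2,\dots\}$ and a zero block of dimension $\aleph_0$, while $\alpha'$ has the defining representation with multiplicity indexed by $\{1,2,\dots\}$ and a zero block of dimension $\aleph_0+\aleph_0$. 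Since $\aleph_0=\aleph_0$ and $\aleph_0+\aleph_0=\aleph_0$, the two are matched by an explicit bijection of the standard basis; the corresponding permutation matrix is an element $g\in M_2(\mathbf C)$ whose inverse $g^{-1}=g^{T}$ again lies in $M_2(\mathbf C)$, so $\Xi=g(-)g^{-1}$ is an honest inner automorphism satisfying \eqref{6.6pqe}.

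The main obstacle is exactly the requirement that $\Xi$ be an \emph{automorphism} rather than a mere endomorphism: conjugation by the naive one-sided shift sending block $n$ to block $n+1$ already yields \eqref{6.6pqe}, but only as conjugation by an isometry, which is not invertible. Indeed, if one solves the intertwining equations $g\,\alpha(B)=\alpha'(B)\,g$ with $\omega$ \emph{unital}, the relation $b\,\omega(1)=0$ forces the relevant off-diagonal entry $b$ of $g$ to vanish, and $g$ cannot be invertible. Building the infinite-dimensional zero block into $\omega$ (so that $\omega(1)\neq 1$) is what supplies the extra ``room'' needed to absorb this discrepancy and upgrade the shift to a genuine permutation unitary. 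Once $(\omega,\Xi)$ are in hand, I would finish by invoking Corollary \ref{cor6.4x} for the one-object category $\mathbf C$ with $\nu:=\omega$ and $\kappa:=\Xi$: conditions (1) and (2) hold trivially because there is a single object, condition (3) is \eqref{6.6pqe}, and therefore $HC^\bullet(\mathbf C)=0$.
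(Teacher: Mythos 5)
Your construction is correct, and it is essentially the standard argument: the paper itself gives no proof of this lemma but simply recalls it from \cite{C2}, where the vanishing of $HC^\bullet(\mathbf C)$ is obtained by exactly this kind of Eilenberg swindle (a non-unital copy of the identity representation with infinite multiplicity, so that $\mathrm{id}\oplus\omega$ and $\omega$ become conjugate by a genuine permutation unitary in $M_2(\mathbf C)$). Your verifications --- that $\omega(B)$ and the permutation matrix $g$ satisfy conditions (i) and (ii) defining $\mathbf C$, that the intertwiner must fail to be invertible when $\omega$ is unital, and that the conclusion follows from Corollary \ref{cor6.4x} applied to the one-object category $\mathbf C$ --- are all sound, so the proposal supplies a complete proof of the statement the paper only cites.
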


\begin{remark}
We note that the condition in \eqref{6.6pqe} ensures that $\omega(\mathbf 1)\neq \mathbf 1$, where $\mathbf 1$ is the unit element of $\mathbf C$.
\end{remark}

For any $k$-algebra $\mathcal A$, we  may define a $k$-linear category $\mathcal{A} \otimes \mathcal{D}_H$ as follows:
\begin{equation*}
Ob(\mathcal{A} \otimes \mathcal{D}_H)=Ob(\mathcal{D}_H) \qquad
Hom_{\mathcal{A} \otimes \mathcal{D}_H}(X,Y)=\mathcal{A} \otimes Hom_{\mathcal{D}_H}(X,Y)
\end{equation*}

The category $\mathcal{A} \otimes \mathcal{D}_H$ is a left $H$-category via the action $h(a \otimes f):=a \otimes hf$ for any $h \in H$, $a \otimes f \in \mathcal{A} \otimes Hom_{\mathcal{D}_H}(X,Y)$.

\begin{lemma}\label{lem6.7x}
We have  $HC^\bullet_H(\mathbf{C} \otimes \mathcal{D}_H,M)=0$.
\end{lemma}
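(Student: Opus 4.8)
The plan is to verify that the $H$-category $\mathbf{C}\otimes\mathcal{D}_H$ satisfies the hypotheses of Proposition \ref{vanishing}, by transporting the algebra-level data $(\omega,\Xi)$ supplied by Lemma \ref{6.6r} along the construction $\mathcal{A}\mapsto \mathcal{A}\otimes\mathcal{D}_H$. First I would define the semifunctor $\upsilon\in Hom_{\overline{Cat}_H}(\mathbf{C}\otimes\mathcal{D}_H,\mathbf{C}\otimes\mathcal{D}_H)$ by $\upsilon(X):=X$ on objects and $\upsilon(a\otimes f):=\omega(a)\otimes f$ on morphisms. Since $\omega$ is an algebra homomorphism, $\upsilon$ preserves composition, and since the $H$-action on $\mathbf{C}\otimes\mathcal{D}_H$ is $h(a\otimes f)=a\otimes hf$, the map $\upsilon$ is $H$-linear and fixes objects, giving hypothesis (1). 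I note that $\upsilon$ is genuinely only a semifunctor and not a functor: it sends the identity $\mathbf 1\otimes id_X$ to $\omega(\mathbf 1)\otimes id_X$, and by the Remark following \eqref{6.6pqe} one has $\omega(\mathbf 1)\neq\mathbf 1$, so identities are not preserved. This is precisely why the semicategorical framework is required.

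Next I would use the identification of $H$-categories $(\mathbf{C}\otimes\mathcal{D}_H)\otimes M_2(k)\cong M_2(\mathbf{C})\otimes\mathcal{D}_H$ coming from $\mathbf{C}\otimes M_2(k)=M_2(\mathbf{C})$, under which a morphism $(a\otimes f)\otimes B$ corresponds to $(a\otimes B)\otimes f$ and which is compatible with the $H$-action on both sides. I then define $\Phi\in Hom_{Cat_H}\big((\mathbf{C}\otimes\mathcal{D}_H)\otimes M_2(k),(\mathbf{C}\otimes\mathcal{D}_H)\otimes M_2(k)\big)$ by $\Phi(A\otimes f):=\Xi(A)\otimes f$ for $A\in M_2(\mathbf{C})$ and $f\in Hom_{\mathcal{D}_H}(X,Y)$. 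Because $\Xi$ is an algebra automorphism, $\Phi$ preserves composition and the $H$-action and fixes objects, giving hypothesis (2); and since $\Xi$ preserves the unit, $\Phi$ preserves identities, so it indeed lies in $Cat_H$. To see that $\Phi$ is inner, I would use that $\mathbf{C}$, hence $M_2(\mathbf{C})$, is unital, so the inner automorphism $\Xi$ is conjugation $\Xi(A)=uAu^{-1}$ by an invertible $u\in M_2(\mathbf{C})$; setting $\eta(X):=u\otimes id_X$ produces invertible morphisms satisfying $\Phi(A\otimes f)=\eta(Y)\circ(A\otimes f)\circ\eta(X)^{-1}$, which exhibits $\Phi$ as isomorphic to the identity functor.

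Finally I would check hypothesis (3). For a simple tensor $g=a\otimes f\in Hom_{\mathbf{C}\otimes\mathcal{D}_H}(X,Y)$, the identification above gives $g\otimes E_{11}+\upsilon(g)\otimes E_{22}=(a\otimes E_{11}+\omega(a)\otimes E_{22})\otimes f$, and applying $\Phi$ together with \eqref{6.6pqe} yields $\Phi(g\otimes E_{11}+\upsilon(g)\otimes E_{22})=\Xi(a\otimes E_{11}+\omega(a)\otimes E_{22})\otimes f=(\omega(a)\otimes E_{22})\otimes f=\upsilon(g)\otimes E_{22}$. Both sides are $k$-linear in $g$, so the identity extends to all of $Hom_{\mathbf{C}\otimes\mathcal{D}_H}(X,Y)$, which is exactly condition (3). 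Proposition \ref{vanishing} then gives $HC^\bullet_H(\mathbf{C}\otimes\mathcal{D}_H,M)=0$. The main thing to get right is not any deep computation but the bookkeeping: the compatibility of the identification $(\mathbf{C}\otimes\mathcal{D}_H)\otimes M_2(k)\cong M_2(\mathbf{C})\otimes\mathcal{D}_H$ with the $H$-action and with the chosen composition convention, and the verification that innerness of $\Xi$ (conjugation by $u$) passes to $\Phi$ via $\eta(X)=u\otimes id_X$. Once these points are settled, Connes' matrix identity \eqref{6.6pqe} supplies condition (3) essentially verbatim.
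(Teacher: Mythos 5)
Your proof is correct and follows essentially the same route as the paper's: transporting $(\omega,\Xi)$ from Lemma \ref{6.6r} to the pair $(\upsilon,\Phi)$ on $\mathbf{C}\otimes\mathcal{D}_H$ via the identification $(\mathbf{C}\otimes\mathcal{D}_H)\otimes M_2(k)\cong M_2(\mathbf{C})\otimes\mathcal{D}_H$ and then invoking Proposition \ref{vanishing}. Your added explanations of why $\upsilon$ fails to preserve identities and why innerness of $\Xi$ passes to $\Phi$ via $\eta(X)=u\otimes id_X$ are correct details that the paper leaves implicit.
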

\begin{proof}
We will verify that the category $\mathbf{C} \otimes \mathcal{D}_H$ satisfies the assumptions of Proposition \ref{vanishing}. Let $\omega$ and $\tilde{U}$ be as in Lemma \ref{6.6r}. We now define $\upsilon:\mathbf{C} \otimes \mathcal{D}_H \longrightarrow \mathbf{C} \otimes \mathcal{D}_H$ given by
\begin{equation*}
\upsilon(X):=X \qquad \upsilon(B \otimes f):=\omega(B) \otimes f
\end{equation*}
for any $X \in Ob(\mathbf{C} \otimes \mathcal{D}_H)$ and $B \otimes f \in Hom_{\mathbf{C} \otimes \mathcal{D}_H}(X,Y)$. Since $\omega:\mathbf{C} \longrightarrow \mathbf{C}$ is an algebra homomorphism, it follows that $\upsilon$ is a semifunctor. By the definition of the $H$-action on $\mathbf{C} \otimes \mathcal{D}_H$, it is also clear that $\upsilon$ is $H$-linear.

\smallskip
Using the identification $\mathbf{C} \otimes \mathcal{D}_H \otimes M_2(\mathbb C) = M_2(\mathbf{C}) \otimes \mathcal{D}_H$, we now define an element $\eta \in \mathbb{G}(\mathbf{C} \otimes \mathcal{D}_H \otimes M_2(\mathbb C))=\mathbb{G}( M_2(\mathbf{C}) \otimes \mathcal{D}_H)$ given by the family of morphims
\begin{equation}\label{tqp}
\{\eta(X):=\tilde U \otimes id_X \in Hom_{M_2(\mathbf{C}) \otimes \mathcal{D}_H}(X,X)=M_2(\mathbf C) \otimes Hom_{\mathcal{D}_H}(X,X)\}_{X \in Ob(\mathcal{D}_H)}
\end{equation}
Since $\tilde U$ is a unit in $M_2(\mathbf C)$, it follows that each $\eta(X)$ in \eqref{tqp} is an automorphism. 
Since $H$ acts trivially on $M_2(\mathbf{C})$, we see that $\eta \in \mathbb{U}_H(\mathbf{C} \otimes \mathcal{D}_H \otimes M_2(\mathbb C))$. Moreover, for any $\tilde{B} \otimes f \in Hom_{M_2(\mathbf{C}) \otimes \mathcal{D}_H}(X,Y)=M_2(\mathbf{C}) \otimes Hom_{\mathcal{D}_H}(X,Y)$, we have
\begin{equation*}
\Phi_\eta(\tilde{B} \otimes f)=\eta(Y) \circ (\tilde{B} \otimes f) \circ \eta(X)^{-1}=(\tilde{U} \otimes id_Y) \circ (\tilde{B} \otimes f) \circ (\tilde{U}^{-1} \otimes id_X)=\tilde{U}\tilde{B}\tilde{U}^{-1} \otimes f=\Xi(\tilde{B}) \otimes f
\end{equation*}

\smallskip
Therefore,  for any $B \otimes f \in \mathbf{C} \otimes Hom_{\mathcal{D}_H}(X,Y)$, we have
\begin{equation*}
\begin{array}{ll}
\Phi_\eta((B \otimes f) \otimes E_{11} + \upsilon(B \otimes f) \otimes E_{22})&=\Phi_\eta(B \otimes f \otimes E_{11} + \omega(B) \otimes f \otimes E_{22})\\
&= \Phi_\eta(B \otimes E_{11} \otimes f + \omega(B) \otimes E_{22} \otimes f)\\
&= \Xi(B \otimes E_{11} + \omega(B) \otimes E_{22}) \otimes f\\
&=\omega(B) \otimes E_{22} \otimes f=\upsilon(B \otimes f) \otimes E_{22}
\end{array}
\end{equation*}
This proves the result.
\end{proof}

We now provide a useful interpretation of the space $B^n_H(\mathcal{D}_H,M)$.
\begin{prop}\label{cob1}
An element $\phi \in C^n_H(\mathcal{D}_H,M)$ is a coboundary iff $\phi$ is the character of an $n$-dimensional vanishing $\mathcal{S}_H$-cycle $(\mathcal{S}_H,\hat{\partial}_H, \hat{\mathscr{T}}^H,\rho)$ over $\mathcal{D}_H$. 
\end{prop}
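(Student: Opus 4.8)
The plan is to prove the two implications separately, using throughout that by Theorem~\ref{charcycl} the character of any cycle over $\mathcal D_H$ is automatically a cocycle; hence both ``coboundaries'' and ``characters of vanishing cycles'' sit inside $Z^n_H(\mathcal D_H,M)$, and it suffices to show these two subsets coincide. The main inputs are the functoriality of $HC^\bullet_H(-,M)$ (Lemma~\ref{6.3a}), the vanishing criterion of Proposition~\ref{vanishing} together with Lemma~\ref{lem6.7x}, and the correspondence between cocycles and closed graded $(H,M)$-traces from Remark~\ref{rem4.11}.

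\emph{Characters of vanishing cycles are coboundaries.} Let $\phi$ be the character of a vanishing cycle $(\mathcal S_H,\hat\partial_H,\hat{\mathscr T}^H,\rho)$. I would first observe that the same triple, read with the identity semifunctor $\mathrm{id}\colon\mathcal S^0_H\to\mathcal S^0_H=(\mathcal S_H)^0$, is a cycle over the $H$-category $\mathcal S^0_H$ whose character $\psi\in Z^n_H(\mathcal S^0_H,M)$ satisfies $\phi=C^n_H(\rho,M)(\psi)$; this is immediate from the defining formula of the character, since each $\rho(f^i)\in\mathcal S^0_H$ and $\hat\partial^0_H$ is computed inside $\mathcal S_H$. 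Because the cycle is vanishing, $\mathcal S^0_H$ meets the hypotheses of Proposition~\ref{vanishing}, so $HC^n_H(\mathcal S^0_H,M)=0$ and hence $\psi\in B^n_H(\mathcal S^0_H,M)$. As $C^\bullet_H(\rho,M)$ is a morphism of complexes (Lemma~\ref{6.3a}(1)) it carries coboundaries to coboundaries, whence $\phi\in B^n_H(\mathcal D_H,M)$.

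\emph{Coboundaries are characters of vanishing cycles.} Conversely, write $\phi=b\psi$ with $\psi\in C^{n-1}_H(\mathcal D_H,M)$ in the $\lambda$-complex. The vanishing category I would use is $\mathbf C\otimes\widetilde{\mathcal D_H}$, the degree-zero part of the DGH-semicategory $\mathbf C\otimes\Omega(\mathcal D_H)$ (with differential $\mathrm{id}_{\mathbf C}\otimes\partial_H$); it satisfies Proposition~\ref{vanishing} by repeating the proof of Lemma~\ref{lem6.7x} with $\upsilon=\omega\otimes\mathrm{id}$ and $\Phi=\Xi\otimes\mathrm{id}$, the decisive point being that $\mathbf C$ here occurs as a \emph{pure} tensor factor, so that the relation \eqref{6.6pqe} defining $\Xi$ applies verbatim. (By contrast $\widetilde{\mathbf C\otimes\mathcal D_H}$, the degree-zero part of $\Omega(\mathbf C\otimes\mathcal D_H)$, does \emph{not} satisfy Proposition~\ref{vanishing}, since its adjoined identities lie outside $\mathbf C$ and cannot be absorbed; this is why one must tensor with $\Omega(\mathcal D_H)$ rather than form $\Omega(\mathbf C\otimes\mathcal D_H)$.) Taking $\rho$ to be the corner embedding $f\mapsto e_{11}\otimes f$ (with $e_{11}\in\mathbf C$ the rank-one idempotent), which is an $H$-linear semifunctor, any closed graded $(H,M)$-trace $\hat{\mathscr T}^H$ of dimension $n$ on $\mathbf C\otimes\Omega(\mathcal D_H)$ produces a vanishing cycle $(\mathbf C\otimes\Omega(\mathcal D_H),\mathrm{id}\otimes\partial_H,\hat{\mathscr T}^H,\rho)$ over $\mathcal D_H$, whose character is $\phi$ as soon as $\hat{\mathscr T}^H_X(m\otimes e_{11}\otimes f^0df^1\cdots df^n)=\phi(m\otimes f^0\otimes\cdots\otimes f^n)$.

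\emph{The main obstacle} is the construction of this trace $\hat{\mathscr T}^H$. A product trace $\lambda\otimes\mathscr T^H_\phi$, with $\mathscr T^H_\phi$ the trace attached to $\phi$ by Remark~\ref{rem4.11} and $\lambda(e_{11})=1$, is forbidden: the graded-trace identity \eqref{gt2} would force $\lambda$ to be a trace on $\mathbf C$, and $\mathbf C$ carries none (this is exactly what makes $HC^\bullet(\mathbf C)=0$). The hypothesis that $\phi$ is a \emph{coboundary}, rather than a general cocycle, is precisely what must be exploited: the identity $\phi=b\psi$ renders the graded trace $\mathscr T^H_\phi$ exact, and I would use $\psi$ to build a genuinely non-product closed graded trace on $\mathbf C\otimes\Omega(\mathcal D_H)$, the ``room'' created by $\omega(\mathbf 1)\neq\mathbf 1$ allowing the exact trace to be spread across $\mathbf C$. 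Checking that this $\hat{\mathscr T}^H$ satisfies \eqref{gt0}--\eqref{gt2} and recovers $\phi$ along $\rho$ is the technical heart of the argument.
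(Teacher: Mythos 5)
Your first implication (characters of vanishing cycles are coboundaries) is correct and is essentially the paper's argument: form the character $\psi$ of the cycle read over $\mathcal S^0_H$, note $\psi\in Z^n_H(\mathcal S^0_H,M)$ by Theorem \ref{charcycl}, use $HC^n_H(\mathcal S^0_H,M)=0$ to write $\psi=b\psi'$, and pull back along $C^{\bullet}_H(\rho,M)$.

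The converse, however, contains a genuine gap: you correctly observe that a product trace $\lambda\otimes\mathscr T^H_\phi$ on $\mathbf C\otimes\Omega(\mathcal D_H)$ cannot exist, and you then announce that one must build a ``genuinely non-product'' closed graded $(H,M)$-trace out of the primitive $\psi$ --- but you never construct it, deferring exactly the step you yourself call the technical heart of the argument. The paper avoids this construction entirely, and your reason for rejecting $\Omega(\mathbf C\otimes\mathcal D_H)$ is what leads you into the dead end. The paper's route is: extend $\psi$ (where $\phi=b\psi$) to $\psi'\in C^{n-1}_H(\mathbf C\otimes\mathcal D_H,M)$ by $\psi'\left(m\otimes(B^0\otimes f^0)\otimes\cdots\otimes(B^{n-1}\otimes f^{n-1})\right):=\psi(m\otimes B^0_{11}f^0\otimes\cdots\otimes B^{n-1}_{11}f^{n-1})$, set $\phi'=b\psi'\in Z^n_H(\mathbf C\otimes\mathcal D_H,M)$, and check that $\phi'$ pulls back to $\phi$ along the semifunctor $f\mapsto\mathbf 1\otimes f$ (this works because $\mathbf 1_{11}=1$). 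Then the implication $(3)\Rightarrow(2)$ of Theorem \ref{charcycl} \emph{hands you} the required closed graded $(H,M)$-trace on the universal DGH-semicategory $\Omega(\mathbf C\otimes\mathcal D_H)$ with no further work; no trace on a tensor-product DG-semicategory is ever constructed by hand. The only residual issue is the one you flag --- the degree-zero part $\widetilde{\mathbf C\otimes\mathcal D_H}$ of $\Omega(\mathbf C\otimes\mathcal D_H)$ has adjoined units and is not visibly covered by Lemma \ref{lem6.7x} --- and the paper resolves it not by switching to $\mathbf C\otimes\Omega(\mathcal D_H)$ but by truncating: it replaces $\Omega(\mathbf C\otimes\mathcal D_H)^0$ by the subcategory $\mathbf C\otimes\mathcal D_H$ (keeping all higher-degree components), observes that the differential and trace restrict, and that the resulting cycle has degree-zero part satisfying Proposition \ref{vanishing}. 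So your proposal is not wrong in what it asserts, but as written it does not prove the forward implication; to complete it along your lines you would still have to exhibit the trace on $\mathbf C\otimes\Omega(\mathcal D_H)$ explicitly, whereas the detour through $\phi'=b\psi'$ and Theorem \ref{charcycl} makes that unnecessary.
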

\begin{proof}
Let $\phi$ be the character of an $n$-dimensional vanishing $\mathcal{S}_H$-cycle $(\mathcal{S}_H,\hat{\partial}_H, \hat{\mathscr{T}}^H, \rho)$. By definition, $\hat{\mathscr{T}}^H$ is an $n$-dimensional closed graded $(H,M)$-trace on the $H$-semicategory $\mathcal{S}_H$ and that
$\mathcal S_H^0$ is an ordinary $H$-category. We now define $\psi \in C^n_H(\mathcal{S}_H^0,M)$ by setting
\begin{equation*}
\psi(m \otimes g^0 \otimes \ldots \otimes g^n):=\hat{\mathscr{T}}^H_{X_0}\big(m \otimes g^0\hat{\partial}_H^0(g^1) \ldots \hat{\partial}_H^0(g^n)\big)
\end{equation*}
for $m \in M$ and $g^0 \otimes \ldots \otimes g^n \in Hom_{\mathcal{S}^0_H}(X_1,X_0) \otimes Hom_{\mathcal{S}^0_H}(X_2,X_1) \otimes \ldots \otimes Hom_{\mathcal{S}^0_H}(X_0,X_n)$. Then, by the implication (1) $\Rightarrow$ (3) in Theorem \ref{charcycl}, we have that $\psi \in Z^n_H(\mathcal{S}_H^0,M)$. Since $HC^n_H(\mathcal{S}_H^0,M)=0$, we have that $\psi =b\psi'$ for some $\psi' \in C^{n-1}_H(\mathcal{S}_H^0,M)$.

\smallskip
By Lemma \ref{6.3a}, the semifunctor $\rho \in Hom_{\overline{Cat}_H}(\mathcal{D}_H,\mathcal{S}_H^0)$ induces a map $C^{n-1}_H(\rho,M):C^{n-1}_H(\mathcal{S}_H^0,M)
\longrightarrow C_H^{n-1}(\mathcal D_H,M)$. Setting $\psi'':=C^{n-1}_H(\rho,M)(\psi')$,  we have
 \begin{equation*}
\left(\psi''\right)(m \otimes p^0 \otimes \ldots \otimes p^{n-1})=\psi'\left(m \otimes \rho(p^0) \otimes \ldots \otimes \rho(p^{n-1})\right)
\end{equation*}
for any $m \in M$ and $p^0 \otimes \ldots \otimes p^{n-1} \in CN_{n-1}(\mathcal{D}_H)$. Therefore,
\begin{equation*}
\begin{array}{ll}
\phi(m \otimes f^0 \otimes \ldots \otimes f^n)&=\hat{\mathscr{T}}^H_{X_0}\big(m \otimes \rho(f^0)\hat{\partial}_H^0\left(\rho(f^1)\right) \ldots \hat{\partial}_H^0\left(\rho(f^n)\right)\big)=\psi\left(m \otimes \rho(f^0) \otimes \ldots \otimes \rho(f^n)\right)\\
&= (b\psi')\left(m \otimes \rho(f^0) \otimes \ldots \otimes \rho(f^n)\right)=(b\psi'')(m \otimes f^0 \otimes \ldots \otimes f^n)\\
\end{array}
\end{equation*}
for any $m \in M$ and $f^0 \otimes \ldots \otimes f^n \in Hom_{\mathcal{D}_H}(X_1,X_0) \otimes Hom_{\mathcal{D}_H}(X_2,X_1) \otimes \ldots \otimes Hom_{\mathcal{D}_H}(X_0,X_n)$. Thus, $\phi \in B^n_H(\mathcal{D}_H,M)$.

\smallskip
Conversely, suppose that $\phi \in B^n_H(\mathcal{D}_H,M)$. Then, $\phi=b\psi$ for some $\psi \in C^{n-1}_H(\mathcal{D}_H,M)$. We now extend $\psi$ to get an element $\psi' \in C^{n-1}_H(\mathbf{C} \otimes \mathcal{D}_H,M)$ as follows:
\begin{equation*}
\psi'\left(m \otimes (B^0 \otimes f^0) \otimes \ldots \otimes (B^{n-1} \otimes f^{n-1})\right)=\psi(m \otimes B^0_{11}f^0 \otimes \ldots \otimes B^{n-1}_{11}f^{n-1})
\end{equation*}
We now set $\phi'=b\psi' \in Z^n_H(\mathbf{C} \otimes \mathcal{D}_H,M)$. We now consider the $H$-linear semifunctor  $
\rho:\mathcal{D}_H \longrightarrow \mathbf{C} \otimes \mathcal{D}_H$ which fixes objects and takes any morphism $
 f$ to $\mathbf 1 \otimes f$. Then, we have
\begin{equation*}
\begin{array}{ll}
\left(C^n_H(\rho,M)(\phi')\right)(m \otimes f^0 \otimes \ldots \otimes f^n)&=\phi'\left(m \otimes \rho(f^0) \otimes \ldots \otimes \rho(f^n)\right)=(b\psi')\left(m \otimes \rho(f^0) \otimes \ldots \otimes \rho(f^n)\right)\\
&=(b\psi)(m \otimes f^0 \otimes \ldots \otimes f^n)=\phi(m \otimes f^0 \otimes \ldots \otimes f^n)\\
\end{array}
\end{equation*}
Since $\phi'\in Z^n_H(\mathbf{C} \otimes \mathcal{D}_H,M)$, the implication (3) $\Rightarrow$ (2) in Theorem \ref{charcycl} gives us a  closed graded $(H,M)$-trace $\mathscr{T}^H$ of dimension $n$ on the DGH-semicategory $\left(\Omega(\mathbf{C} \otimes \mathcal{D}_H),\partial_H\right)$ such that
\begin{equation}\label{7.6tre}
\mathscr{T}^H_{X_0}\left(m \otimes \rho(f^0)\partial_H^0\left(\rho(f^1)\right) \ldots \partial_H^0\left(\rho(f^n)\right)\right)=
\phi'\left(m \otimes \rho(f^0) \otimes \ldots \otimes \rho(f^n)\right)
=\phi(m \otimes f^0 \otimes \ldots \otimes f^n)
\end{equation}
Since $\left(\Omega\left(\mathbf{C} \otimes \mathcal{D}_H\right)\right)^0=\mathbf C\otimes \mathcal D_H$ is a left $H$-category, we see that $\phi$ is the character associated to the cycle $\left(\Omega\left(\mathbf{C} \otimes \mathcal{D}_H\right),\partial_H,\mathscr{T}^H, {\rho} \right)$ over $\mathcal D_H$.

\smallskip 
 From the proof of Lemma \ref{lem6.7x}, we know that  $\mathbf{C} \otimes \mathcal{D}_H$ satisfies the assumptions in Proposition \ref{vanishing}. Hence, $\left(\Omega\left(\mathbf{C} \otimes \mathcal{D}_H\right),\partial_H,\mathscr{T}^H, \rho \right)$ is a vanishing cycle over $\mathcal D_H$. From this, the result follows.
\end{proof}

Substituting $H=k=M$, we have
\begin{corollary}\label{cor6.9r}
An element $\phi \in C^n(\mathcal{C})$ is a coboundary iff $\phi$ is the character of an $n$-dimensional vanishing $\mathcal{S}$-cycle $(\mathcal{S},\hat{\partial}, \hat{{T}},\rho)$ over $\mathcal{C}$. 
\end{corollary}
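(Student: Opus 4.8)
The plan is to obtain this statement as the specialization $H=k=M$ of Proposition \ref{cob1}, so the bulk of the work is to check that every ingredient used in the proof of that proposition has a counterpart under this substitution. First I would recall that, as noted in the remark following Proposition \ref{prop2.3}, putting $H=k=M$ identifies $C^\bullet_H(\mathcal{D}_H,M)$ with the ordinary cocyclic module $CN^\bullet(\mathcal{C})$, so that $C^n_H(\mathcal{D}_H,M)$ becomes $C^n(\mathcal{C})=CN^n(\mathcal{C})$ and $B^n_H(\mathcal{D}_H,M)$ becomes $B^n(\mathcal{C})$. Under this identification the coaction $\rho$ becomes trivial, i.e. $m_{(-1)}\otimes m_{(0)}=1\otimes m$ and $S^{-1}(m_{(-1)})=1$, so the closed graded $(H,M)$-trace conditions \eqref{gt0}--\eqref{gt2} collapse to the closed graded trace conditions \eqref{gh1}--\eqref{gh2}, and an $n$-dimensional vanishing $\mathcal S_H$-cycle becomes exactly an $n$-dimensional vanishing $\mathcal S$-cycle in the sense obtained from Definition \ref{def6.5} by setting $H=k=M$.

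Next I would verify that the two external inputs to the proof of Proposition \ref{cob1} survive. The hypothesis $k=\mathbb{C}$ is already in force throughout this section, so Lemma \ref{6.6r} applies verbatim and gives $HC^\bullet(\mathbf{C})=0$. The specialization of Lemma \ref{lem6.7x} reads $HC^\bullet(\mathbf{C}\otimes\mathcal{C})=0$, which follows by the same argument with Proposition \ref{vanishing} replaced by its $H=k=M$ form, namely Corollary \ref{cor6.4x}, using the semifunctor $\nu(B\otimes f)=\omega(B)\otimes f$ together with the inner automorphism induced by $\Xi$. With these in hand, the forward implication runs exactly as in Proposition \ref{cob1}: given a vanishing cycle $(\mathcal{S},\hat\partial,\hat{T},\rho)$ over $\mathcal{C}$, one forms $\psi\in C^n(\mathcal{S}^0)$ via $\psi(g^0\otimes\dots\otimes g^n)=\hat{T}_{X_0}(g^0\hat\partial^0(g^1)\dots\hat\partial^0(g^n))$, invokes the implication (1)$\Rightarrow$(3) of Proposition \ref{ordcharcycl} to see $\psi\in Z^n(\mathcal{S}^0)$, and then uses $HC^n(\mathcal{S}^0)=0$ to write $\psi=b\psi'$ and pull back along $\rho$. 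The converse extends $\psi$ to $\mathbf{C}\otimes\mathcal{C}$, produces a closed graded trace on $\Omega(\mathbf{C}\otimes\mathcal{C})$ through the implication (3)$\Rightarrow$(2) of Proposition \ref{ordcharcycl}, restricts to the modified DG-semicategory $\Omega'(\mathbf{C}\otimes\mathcal{C})$ whose degree-zero part is $\mathbf{C}\otimes\mathcal{C}$, and concludes via $HC^\bullet(\mathbf{C}\otimes\mathcal{C})=0$ that the resulting cycle is vanishing.

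I do not expect a genuine obstacle here, since the argument is a faithful specialization of an already established result. The only point requiring care is the bookkeeping: one must confirm that at $H=k=M$ the categorical constructions (the category $\mathbf{C}\otimes\mathcal{D}_H$, the universal semicategory $\Omega$, the trace map, and the functor $HC^\bullet_H(-,M)$) reduce to their ordinary analogues, and in particular that the appeal to Proposition \ref{vanishing} implicit in Definition \ref{def6.5} becomes an appeal to Corollary \ref{cor6.4x}. Granting this, the statement follows immediately; indeed the cleanest write-up is simply to record that it is the case $H=k=M$ of Proposition \ref{cob1}.
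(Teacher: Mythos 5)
Your proposal is correct and matches the paper exactly: the paper obtains this corollary precisely by substituting $H=k=M$ in Proposition \ref{cob1}, which is the route you take. Your additional bookkeeping (checking that the $(H,M)$-trace conditions collapse to \eqref{gh1}--\eqref{gh2}, that Lemma \ref{lem6.7x} specializes to $HC^\bullet(\mathbf{C}\otimes\mathcal{C})=0$ via Corollary \ref{cor6.4x}, and that Theorem \ref{charcycl} specializes to Proposition \ref{ordcharcycl}) is exactly the verification the paper leaves implicit.
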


Our final aim is to use the method of categorified cycles and categorified vanishing cycles to obtain a pairing \begin{equation*}
HC^p(\mathcal{C}) \otimes HC^q(\mathcal{C}') \longrightarrow HC^{p+q}(\mathcal{C} \otimes \mathcal{C}')
\end{equation*} for $k$-linear categories $\mathcal C$ and $\mathcal C'$. 
Let $(\mathcal S,\hat\partial_\mathcal{S})$ and $(\mathcal S',\hat\partial_{\mathcal{S}'})$ be DG-semicategories. Then, their tensor product $\mathcal S \otimes \mathcal S'$ is the DG-semicategory defined as follows:
$$\begin{array}{ll}
Ob( \mathcal S \otimes \mathcal S')=Ob(\mathcal{S}) \times Ob(\mathcal{S}')\\
Hom_{\mathcal S \otimes \mathcal S'}^n\left((X,X'),(Y,Y')\right)=\bigoplus\limits_{i+j=n}Hom^i_\mathcal{S}(X,Y) \otimes_k Hom^j_{\mathcal{S}'}(X',Y')\\
\end{array}$$
The composition in $ \mathcal S \otimes  \mathcal S'$ is given by the rule:
\begin{equation*}
(g \otimes g')\circ (f \otimes f')=(-1)^{deg(g') deg(f)} (gf \otimes g'f')
\end{equation*}
for homogeneous $f:X \longrightarrow Y$, $g: Y \longrightarrow Z$ in $\mathcal{S}$ and $f':X' \longrightarrow Y'$, $g': Y' \longrightarrow Z'$ in $\mathcal{S}'$. The differential $\hat\partial^n_{\mathcal{S} \otimes \mathcal{S}'} :Hom_{\mathcal S \otimes \mathcal S'}^n\left((X,X'),(Y,Y')\right) \longrightarrow Hom_{\mathcal S \otimes  \mathcal S'}^{n+1}\left((X,X'),(Y,Y')\right)$ is determined by
\begin{equation*}
\hat\partial^n_{\mathcal{S} \otimes \mathcal{S}'}(f_i \otimes g_j)=\hat\partial^i_{\mathcal{S}}(f_i) \otimes g_j + (-1)^i f_i \otimes \hat\partial^j_{\mathcal{S}'}(g_j)
\end{equation*}
for any $f_i \in Hom^i_\mathcal{S}(X,Y)$ and $g_j \in Hom^j_{\mathcal{S}'}(X',Y')$ such that $i+j=n$. Clearly, $(\mathcal{S} \otimes \mathcal{S}')^0=\mathcal{S}^0 \otimes \mathcal{S}'^0$.

\begin{theorem}\label{Thmfin}
Let $\mathcal{C}$ and $\mathcal{C'}$ be small $k$-linear categories. Then, we have a pairing
\begin{equation*}
HC^p(\mathcal{C}) \otimes HC^q(\mathcal{C}') \longrightarrow HC^{p+q}(\mathcal{C} \otimes \mathcal{C}')
\end{equation*} for $p$, $q\geq 0$. 
\end{theorem}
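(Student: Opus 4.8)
The plan is to imitate Connes' construction of the cup product in cyclic cohomology via the tensor product of cycles, using the two dual descriptions obtained above: Proposition \ref{ordcharcycl} (together with Remark \ref{rem4.12}), which identifies $n$-cocycles of a small $k$-linear category with characters of $n$-dimensional cycles, and Corollary \ref{cor6.9r}, which identifies coboundaries with characters of \emph{vanishing} cycles. Throughout, $\mathcal{C}\otimes\mathcal{C}'$ denotes the $k$-linear category with $Ob(\mathcal C)\times Ob(\mathcal C')$ as objects and $Hom_{\mathcal C}(X,Y)\otimes Hom_{\mathcal C'}(X',Y')$ as morphism spaces, so that it agrees with $(\mathcal S\otimes\mathcal S')^0=\mathcal S^0\otimes\mathcal S'^0$ for DG-semicategories.

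First I would define the pairing at the level of cocycles. Given $\phi\in Z^p(\mathcal C)$ and $\psi\in Z^q(\mathcal C')$, Proposition \ref{ordcharcycl} realizes them as characters of cycles $(\mathcal S,\hat\partial,\hat T,\rho)$ over $\mathcal C$ and $(\mathcal S',\hat\partial',\hat T',\rho')$ over $\mathcal C'$, the canonical choice being the universal cycles $(\Omega\mathcal C,\partial,T_\phi)$ and $(\Omega\mathcal C',\partial',T'_\psi)$ furnished by Remark \ref{rem4.12}. On the tensor product DG-semicategory $\mathcal S\otimes\mathcal S'$ defined just above I would introduce the product trace $\hat T\#\hat T'$ of dimension $p+q$, declared to vanish on every homogeneous summand $Hom^i_{\mathcal S}(X,X)\otimes Hom^j_{\mathcal S'}(X',X')$ of total degree $p+q$ except the bidegree $(p,q)$ piece, on which it is set to $(\hat T\#\hat T')_{(X,X')}(a\otimes b):=\hat T_X(a)\,\hat T'_{X'}(b)$. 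Together with the semifunctor $\rho\otimes\rho':\mathcal C\otimes\mathcal C'\longrightarrow\mathcal S^0\otimes\mathcal S'^0=(\mathcal S\otimes\mathcal S')^0$ this yields a $(p+q)$-dimensional cycle over $\mathcal C\otimes\mathcal C'$, and I define $\phi\#\psi$ to be its character.

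The first verification is that $\hat T\#\hat T'$ is genuinely a closed graded trace of dimension $p+q$. Closedness \eqref{gh1} follows from $\hat\partial_{\mathcal S\otimes\mathcal S'}(a\otimes b)=\hat\partial a\otimes b+(-1)^{\deg a}a\otimes\hat\partial' b$: the bidegree $(p,q)$ part of $\hat\partial_{\mathcal S\otimes\mathcal S'}\xi$ is a sum of a term of the form $\hat\partial a\otimes b$ and one of the form $\pm\,a\otimes\hat\partial' b$, killed respectively by closedness of $\hat T$ and of $\hat T'$. The graded trace identity \eqref{gh2} follows from the corresponding identities for the two factors together with the Koszul sign in the composition rule of $\mathcal S\otimes\mathcal S'$; this is a careful but routine sign computation. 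Proposition \ref{ordcharcycl} then gives at once $\phi\#\psi\in Z^{p+q}(\mathcal C\otimes\mathcal C')$. Expanding the character through the Leibniz rule moreover exhibits $\phi\#\psi$ as a signed shuffle expression in the values of $\phi$ and $\psi$ alone, so it is independent of the chosen cycles; in particular I am free to recompute it with whatever representatives are most convenient.

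It remains to show that this cochain pairing descends to cohomology, i.e. that $B^p(\mathcal C)\#Z^q(\mathcal C')$ and $Z^p(\mathcal C)\#B^q(\mathcal C')$ land in $B^{p+q}(\mathcal C\otimes\mathcal C')$. Suppose $\phi\in B^p(\mathcal C)$. By Corollary \ref{cor6.9r} it is the character of a vanishing cycle, so $\mathcal S^0$ admits a semifunctor $\nu$ and an inner automorphism $\kappa$ of $\mathcal S^0\otimes M_2(k)$ satisfying the hypotheses of Corollary \ref{cor6.4x}. Computing $\phi\#\psi$ with this cycle, the tensor product cycle has degree-zero category $\mathcal S^0\otimes\mathcal S'^0$, which I would show is again vanishing by taking the semifunctor $\nu\otimes\mathrm{id}_{\mathcal S'^0}$ and, via the reshuffling $(\mathcal S^0\otimes\mathcal S'^0)\otimes M_2(k)\cong(\mathcal S^0\otimes M_2(k))\otimes\mathcal S'^0$, the inner automorphism $\kappa\otimes\mathrm{id}_{\mathcal S'^0}$; conditions (1)--(3) of Corollary \ref{cor6.4x} transport factorwise. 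Hence $\phi\#\psi$ is the character of a vanishing cycle, so lies in $B^{p+q}(\mathcal C\otimes\mathcal C')$ by Corollary \ref{cor6.9r}, the case $\psi\in B^q(\mathcal C')$ being symmetric. This produces the desired bilinear pairing $HC^p(\mathcal C)\otimes HC^q(\mathcal C')\to HC^{p+q}(\mathcal C\otimes\mathcal C')$. I expect the main obstacle to be precisely this last step: confirming that the rigid vanishing structure of Corollary \ref{cor6.4x} is stable under tensoring with an arbitrary semicategory, in particular that reordering the matrix factor keeps $\kappa\otimes\mathrm{id}$ inner and preserves identity (3).
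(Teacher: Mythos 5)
Your proposal is correct and follows essentially the same route as the paper: realize the cocycles as characters of cycles, equip $\mathcal S\otimes\mathcal S'$ with the product trace supported in bidegree $(p,q)$, verify the closed graded trace conditions, and use the vanishing-cycle characterization of coboundaries together with $\nu\otimes\mathrm{id}$ and $\kappa\otimes\mathrm{id}$ to show the pairing descends to cohomology. Your additional remarks on independence of the chosen cycle representatives and on the symmetric case $Z^p(\mathcal C)\#B^q(\mathcal C')$ are points the paper passes over in silence, and are welcome.
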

\begin{proof}
Let $\phi \in Z^p(\mathcal{C})$ and $\phi' \in Z^q(\mathcal{C}')$. We may express $\phi$ and $\phi'$ respectively as the characters of $p$ and $q$-dimensional cycles $(\mathcal{S},\hat\partial, \hat{T},\rho)$ and $(\mathcal{S}',\hat\partial',\hat{T}',\rho')$ over $\mathcal{C}$ and $\mathcal{C}'$. We consider the collection $\hat{T} \# \hat{T}':=\{ {(\hat{T} \# \hat{T}')}_{(X,X')}:Hom^{p+q}_{\mathcal{S}\otimes \mathcal{S}'}\left((X,X'),(X,X')\right) \longrightarrow \mathbb{C}\}_{(X,X') \in Ob\left(\mathcal{S}\otimes  \mathcal{S}'\right)}$ of $\mathbb{C}$-linear maps defined by
\begin{equation*}
{(\hat{T} \# \hat{T}')}_{(X,X')}(f \otimes f'):=\hat{T}_X(f_p)\hat{T}'_{X'}(f'_q)
\end{equation*}
for any $f \otimes f'= (f_i \otimes f'_j)_{i+j=p+q} \in Hom^{p+q}_{\mathcal{S} \otimes \mathcal{S}'}\left((X,X'),(X,X')\right)$.
We will now prove that $\hat{T} \# \hat{T}'$ is a $p+q$-dimensional closed graded trace on the DG-semicategory $\mathcal{S}\otimes \mathcal{S}'$. For any $g \otimes g'= (g_i \otimes g'_j)_{i+j=p+q-1} \in Hom^{p+q-1}_{\mathcal{S} \otimes \mathcal{S}'}\left((X,X'),(X,Y)\right)$, we have
$$\begin{array}{ll}
(\hat{T} \# \hat{T}')_{(X,X')}\left(\hat\partial^{p+q-1}(g \otimes g')\right)&=\underset{i+j=p+q-1}{\sum}(\hat{T} \# \hat{T}')_{(X,X')}\left(\hat\partial^i_{\mathcal{S}}(g_i) \otimes g'_j + (-1)^{i} g_i \otimes \hat\partial^j_{\mathcal{S}'}(g'_j) \right)\\
&=\hat{T}_X(\hat\partial^{p-1}_\mathcal S(g_{p-1}))\hat{T}'_{X'}(g'_q)+(-1)^{p}\hat{T}_X(g_p)\hat{T}'_{X'}(\hat\partial^{q-1}_{\mathcal S'}(g'_{q-1}))=0
\end{array}$$

This proves the condition in \eqref{gh1}. Next for any homogeneous $f:X \longrightarrow Y$, $g: Y \longrightarrow X$ in $\mathcal{S}$ and $f':X' \longrightarrow Y'$, $g': Y' \longrightarrow X'$ in $\mathcal{S}'$, we have
$$\begin{array}{ll}
&(\hat{T} \# \hat{T}')_{(X,X')}\left((g \otimes g')(f \otimes f')\right)\\
&=(-1)^{deg(g')deg(f)}(\hat{T} \# \hat{T}')_{(X,X')}(gf \otimes g'f')\\
&=(-1)^{deg(g')deg(f)}\hat{T}_X((gf)_p)\hat{T}'_{X'}((g'f')_q)\\
&=(-1)^{deg(g')deg(f)}(-1)^{deg(g)deg(f)}(-1)^{deg(g')deg(f')}\hat{T}_Y((fg)_p)\hat{T}'_{Y'}((f'g')_q)\\
&=(-1)^{deg(g')deg(f)}(-1)^{deg(g)deg(f)}(-1)^{deg(g')deg(f')}(-1)^{deg(g)deg(f')}(\hat{T} \#\hat{T}')_{(Y,Y')}\left((f \otimes f')(g \otimes g')\right)\\
&=(-1)^{deg(g \otimes g')deg(f \otimes f')}(\hat{T} \# \hat{T}')_{(Y,Y')}\left((f \otimes f')(g \otimes g')\right)
\end{array}$$
This proves the condition in \eqref{gh2}. Thus, we obtain a $p+q$ dimensional cycle $\left(\mathcal{S} \otimes \mathcal{S}', \hat\partial_{\mathcal{S} \otimes \mathcal{S}'}, \hat{T}\#\hat{T}', \rho \otimes \rho'\right)$ over the category $\mathcal C \otimes \mathcal{C}'$. Then,  the character of this cycle, denoted by $\phi \# \phi' \in Z^{p+q}(\mathcal{C} \otimes \mathcal{C}')$,
gives  a well defined map $\gamma:Z^p(\mathcal{C}) \otimes Z^q(\mathcal{C}') \longrightarrow Z^{p+q}(\mathcal{C} \otimes \mathcal{C}')$. 

\smallskip
We now verify that the map $\gamma$ restricts to a pairing
$$\begin{array}{ll}
B^p(\mathcal{C}) \otimes Z^q(\mathcal{C}') \longrightarrow B^{p+q}(\mathcal{C} \otimes \mathcal{C}')
\end{array}$$
For this, we let $\phi \in Z^p(\mathcal{C})$ be the character of a $p$-dimensional vanishing cycle $\left(\mathcal{S},\hat{\partial},\hat{T}, \rho \right)$ over $\mathcal C$. In particular, it follows from Definition \ref{def6.5} that $\mathcal S^0$ is an ordinary category. From the implication (1) $\Rightarrow$ (2) in Theorem \ref{charcycl}, it follows that we might as well take $\mathcal{S'}^0$ to be an ordinary category. In fact, we could assume that $\mathcal S'=\Omega\mathcal C'$. Then, $\mathcal S^0\otimes \mathcal S'^0$ is an ordinary category. It suffices to show that the tuple $\left(\mathcal{S} \otimes \mathcal{S}', \hat\partial_{\mathcal{S} \otimes \mathcal{S}'}, \hat{T}\#\hat{T}', \rho \otimes \rho'\right)$ is a vanishing cycle.

\smallskip
Since $\left(\mathcal{S},{\hat\partial},\hat{T}\right)$ is a vanishing cycle, we have a $\mathbb{C}$-linear semifunctor $\upsilon: \mathcal{S}^0 \longrightarrow \mathcal{S}^0$ and an $\eta \in \mathbb{U}(\mathcal S^0 \otimes M_2(\mathbb{C}))$ satisfying the conditions in Corollary \ref{cor6.4x}. Extending  $\upsilon$, we get the the semifunctor $\upsilon \otimes id: \mathcal{S}^0 \otimes \mathcal{S}'^0 \longrightarrow \mathcal{S}^0 \otimes \mathcal{S}'^0$. Identifying, $\mathcal{S}^0 \otimes \mathcal S'^0 \otimes M_2(\mathbb{C})  \cong \mathcal{S}^0  \otimes M_2(\mathbb{C}) \otimes \mathcal S'^0$, we obtain $\tilde{\eta} \in \mathbb{U}(\mathcal{S}^0  \otimes M_2(\mathbb{C}) \otimes \mathcal S'^0)$ given by
\begin{equation*}
\{\tilde{\eta}(X,X')=\eta(X) \otimes id_{X'} \in Hom_{\mathcal{S}^0  \otimes M_2(\mathbb{C}) \otimes \mathcal S'^0}((X,X'),(X,X'))=Hom_{\mathcal{S}^0  \otimes M_2(\mathbb{C})}(X,X) \otimes Hom_{\mathcal{S}'^0}(X',X')\}
\end{equation*}
It may also be easily verified that
\begin{equation*}
\Phi_{\tilde{\eta}}(f \otimes f' \otimes E_{11} + (\upsilon \otimes id)(f \otimes f') \otimes E_{22})=(\upsilon \otimes id)(f \otimes f') \otimes E_{22}
\end{equation*}
Thus, we see that the category $(\mathcal S \otimes \mathcal S')^0=\mathcal{S}^0 \otimes \mathcal{S}'^0$   satisfies the conditions in  Corollary \ref{cor6.4x}. Therefore, the tuple $\left(\mathcal{S} \otimes \mathcal{S}', \hat\partial_{\mathcal{S} \otimes \mathcal{S}'}, \hat{T}\#\hat{T}', \rho \otimes \rho'\right)$ is a vanishing cycle.
This proves the result.
\end{proof}

\begin{bibdiv}
\begin{biblist}

\bib{AK}{article}{
   author={Akbarpour, R.},
   author={Khalkhali, M.},
   title={Hopf algebra equivariant cyclic homology and cyclic homology of
   crossed product algebras},
   journal={J. Reine Angew. Math.},
   volume={559},
   date={2003},
   pages={137--152},
 
}

\bib{MB}{article}{
   author={Balodi, M.},
   title={Morita invariance of equivariant and Hopf-cyclic cohomology of module algebras over Hopf algebroids},
   journal={	arXiv:1804.10898 [math.QA]},
}

\bib{BB}{article}{
author={Balodi, M.},
   author={Banerjee, A.},
   title={Categorified Fredholm modules and Chern characters},
   journal={	arXiv:1912.12658 [math.CT]},
}

\bib{BBR1}{article}{
author={Balodi, M.},
   author={Banerjee, A.},
   author={Ray, S.},
   title={Cohomology of modules over $H$-categories and co-$H$-categories},
   journal={Canad. J. Math. (to appear)},
}

\bib{BBR2}{article}{
   author={Balodi, M.},
   author={Banerjee, A.},
   author={Ray, S.},
   title={On entwined modules over linear categories},
   journal={Isarel J. Math. (to appear)},
}

\bib{AB}{article}{
  author={Banerjee, A.},
   title={On differential torsion theories and rings with several objects},
   journal={Canad. Math. Bull.},
   volume={62},
   date={2019},
   number={4},
   pages={703--714},
}

\bib{BoSt}{article}{
   author={B\"{o}hm, G.},
   author={\c{S}tefan, D.},
   title={(Co)cyclic (co)homology of bialgebroids: an approach via
   (co)monads},
   journal={Comm. Math. Phys.},
   volume={282},
   date={2008},
   number={1},
   pages={239--286},
}

\bib{BoSt1}{article}{
   author={B\"{o}hm, G.},
   author={\c{S}tefan, D.},
   title={A categorical approach to cyclic duality},
   journal={J. Noncommut. Geom.},
   volume={6},
   date={2012},
   number={3},
   pages={481--538},
}

\bib{BoLS}{article}{
   author={G.~B\"{o}hm,},
   author={S.~Lack,},
   author={Street, R.},
   title={Idempotent splittings, colimit completion, and weak aspects of the
   theory of monads},
   journal={J. Pure Appl. Algebra},
   volume={216},
   date={2012},
   number={2},
   pages={385--403},
}

\bib{CiSo}{article}{
   author={C.~Cibils,},
   author={A.~Solotar,},
   title={Galois coverings, Morita equivalence and smash extensions of
   categories over a field},
   journal={Doc. Math.},
   volume={11},
   date={2006},
   pages={143--159},
}

\bib{C1}{article}{
   author={Connes, A.},
   title={Cohomologie cyclique et foncteurs ${\rm Ext}^n$},
   journal={C. R. Acad. Sci. Paris S\'er. I Math.},
   volume={296},
   date={1983},
   number={23},
   pages={953--958},
}

\bib{C2}{article}{
   author={Connes, A.},
   title={Noncommutative differential geometry},
   journal={Inst. Hautes \'{E}tudes Sci. Publ. Math.},
   volume={62},
   date={1985},
   pages={257--360},
}

\bib{CM0}{article}{
   author={Connes, A.},
   author={Moscovici, H.},
   title={Hopf algebras, cyclic cohomology and the transverse index theorem},
   journal={Comm. Math. Phys.},
   volume={198},
   date={1998},
   number={1},
   pages={199--246},
}

\bib{CM1}{article}{
   author={Connes, A.},
   author={Moscovici, H.},
   title={Cyclic cohomology and Hopf algebras},
   note={Mosh\'{e} Flato (1937--1998)},
   journal={Lett. Math. Phys.},
   volume={48},
   date={1999},
   number={1},
   pages={97--108},

}

\bib{CM2}{article}{
   author={Connes, A.},
   author={Moscovici, H.},
   title={Cyclic Cohomology and Hopf Algebra Symmetry},
   journal={Lett. Math. Phys.},
   volume={52},
   date={2000},
   number={1},
   pages={1--28},
}

\bib{EV}{article}{
   author={Estrada, S.},
   author={Virili, S.},
   title={Cartesian modules over representations of small categories},
   journal={Adv. Math.},
   volume={310},
   date={2017},
   pages={557--609},
}

\bib{hkrs}{article}{
   author={Hajac, P. M.},
   author={Khalkhali, M.},
   author={Rangipour, B.},
   author={Sommerh\"auser, Y.},
   title={Hopf-cyclic homology and cohomology with coefficients},
   journal={C. R. Math. Acad. Sci. Paris},
   volume={338},
   date={2004},
   number={9},
   pages={667--672},
}

\bib{hkrs2}{article}{
   author={Hajac, P. M.},
   author={Khalkhali, M.},
   author={Rangipour, B.},
   author={Sommerh\"auser, Y.},
   title={Stable anti-Yetter-Drinfeld modules},
   journal={C. R. Math. Acad. Sci. Paris},
   volume={338},
   date={2004},
   number={8},
   pages={587--590},
}

\bib{Hask}{article}{
   author={Hassanzadeh, M.},
   author={Kucerovsky, D.},
   author={Rangipour, B.},
   title={Generalized coefficients for Hopf cyclic cohomology},
   journal={SIGMA Symmetry Integrability Geom. Methods Appl.},
   volume={10},
   date={2014},
   pages={Paper 093, 16},
}

\bib{Hask1}{article}{
   author={Hassanzadeh, M.},
   author={Khalkhali, M.},
   author={Shapiro, I.},
   title={Monoidal categories, 2-traces, and cyclic cohomology},
   journal={Canad. Math. Bull.},
   volume={62},
   date={2019},
   number={2},
   pages={293--312},
}

\bib{HS}{article}{
   author={Herscovich, E.},
   author={Solotar, A.},
   title={Hochschild-Mitchell cohomology and Galois extensions},
   journal={J. Pure Appl. Algebra},
   volume={209},
   date={2007},
   number={1},
   pages={37--55},
}

\bib{KL}{article}{
   author={Kaledin, D.},
   author={Lowen, W.},
   title={Cohomology of exact categories and (non-)additive sheaves},
   journal={Adv. Math.},
   volume={272},
   date={2015},
   pages={652--698},
}

\bib{kv}{article}{
   author={Karoubi, M.},
   author={Villamayor, O.},
   title={$K$-th\'{e}orie alg\'{e}brique et $K$-th\'{e}orie topologique. I},
   journal={Math. Scand.},
   volume={28},
   date={1971},
   pages={265--307},
}

\bib{kyg}{article}{
   author={Kaygun, A.},
   title={Bialgebra cyclic homology with coefficients},
   journal={$K$-Theory},
   volume={34},
   date={2005},
   number={2},
   pages={151--194},
 
}

\bib{kyg1}{article}{
   author={Kaygun, A.},
   title={The universal Hopf-cyclic theory},
   journal={J. Noncommut. Geom.},
   volume={2},
   date={2008},
   number={3},
   pages={333--351},
}

\bib{kk}{article}{
   author={Kaygun, A.},
   author={Khalkhali, M.},
   title={Bivariant Hopf cyclic cohomology},
   journal={Comm. Algebra},
   volume={38},
   date={2010},
   number={7},
   pages={2513--2537},
}

\bib{Ke1}{article}{
   author={Keller, B.},
   title={Deriving DG categories},
   journal={Ann. Sci. \'{E}cole Norm. Sup. (4)},
   volume={27},
   date={1994},
   number={1},
   pages={63--102},
}

\bib{Ke2}{article}{
   author={Keller, B.},
   title={On differential graded categories},
   conference={
      title={International Congress of Mathematicians. Vol. II},
   },
   book={
      publisher={Eur. Math. Soc., Z\"{u}rich},
   },
   date={2006},
   pages={151--190},
}

\bib{kr}{article}{
   author={Khalkhali, M.},
   author={Rangipour, B.},
   title={Cup products in Hopf-cyclic cohomology},
   journal={C. R. Math. Acad. Sci. Paris},
   volume={340},
   date={2005},
   pages={9--14},
}

\bib{KoSh}{article}{
   author={Kobyzev, I.},
   author={Shapiro, I.},
   title={A Categorical Approach to Cyclic Cohomology of Quasi-Hopf Algebras
   and Hopf Algebroids},
   journal={Appl. Categ. Structures},
   volume={27},
   date={2019},
   number={1},
   pages={85--109},
}

\bib{Loday}{book}{
   author={Loday, J.-L},
   title={Cyclic homology},
   series={Grundlehren der Mathematischen Wissenschaften [Fundamental
   Principles of Mathematical Sciences]},
   volume={301},
   %note={Appendix E by Mar\'{i}a O. Ronco},
   publisher={Springer-Verlag, Berlin},
   date={1992},
   pages={xviii+454},
}

\bib{LV}{article}{
   author={Lowen, W.},
   author={Van den Bergh, M.},
   title={Hochschild cohomology of abelian categories and ringed spaces},
   journal={Adv. Math.},
   volume={198},
   date={2005},
   number={1},
   pages={172--221},
}

\bib{LV2}{article}{
   author={Lowen, W.},
   author={Van den Bergh, M.},
   title={Deformation theory of abelian categories},
   journal={Trans. Amer. Math. Soc.},
   volume={358},
   date={2006},
   number={12},
   pages={5441--5483},
}

\bib{Low}{article}{
   author={Lowen, W.},
   title={Hochschild cohomology, the characteristic morphism and derived
   deformations},
   journal={Compos. Math.},
   volume={144},
   date={2008},
   number={6},
   pages={1557--1580},
 
}

\bib{Low2}{article}{
   author={Lowen, W.},
   title={Hochschild cohomology with support},
   journal={Int. Math. Res. Not. IMRN},
   date={2015},
   number={13},
   pages={4741--4812},
 
}

\bib{carthy}{article}{
   author={McCarthy, R.},
   title={The cyclic homology of an exact category},
   journal={J. Pure Appl. Algebra},
   volume={93},
   date={1994},
   number={3},
   pages={251--296},
}

\bib{Mit}{article}{
   author={Mitchell, B.},
   title={The dominion of Isbell},
   journal={Trans. Amer. Math. Soc.},
   volume={167},
   date={1972},
   pages={319--331},
}

\bib{Mit1}{article}{
   author={Mitchell, B.},
   title={Rings with several objects},
   journal={Adv. Math.},
   volume={8},
   date={1972},
   pages={1--161},
}

\bib{Mit2}{article}{
   author={Mitchell, B.},
   title={Some applications of module theory to functor categories},
   journal={Bull. Amer. Math. Soc.},
   volume={84},
   date={1978},
   number={5},
   pages={867--885},
}

\bib{Rangipu}{article}{
   author={Rangipour, B.},
   title={Cup products in Hopf cyclic cohomology via cyclic modules},
   journal={Homology Homotopy Appl.},
   volume={10},
   date={2008},
   number={2},
   pages={273--286},
}

\bib{Sch}{book}{
   author={Schubert, H.},
   title={Categories},
   publisher={Springer-Verlag, New York-Heidelberg},
   date={1972},
   pages={xi+385},
}

\bib{Sten}{book}{
   author={Stenstr\"om, B.},
   title={Rings of quotients},
   %note={An introduction to methods of ring theory},
   publisher={Springer-Verlag, New York-Heidelberg},
   date={1975},
   pages={viii+309},
}

\bib{Xu1}{article}{
   author={Xu, F.},
   title={On the cohomology rings of small categories},
   journal={J. Pure Appl. Algebra},
   volume={212},
   date={2008},
   number={11},
   pages={2555--2569},
}

\bib{Xu2}{article}{
   author={Xu, Fei},
   title={Hochschild and ordinary cohomology rings of small categories},
   journal={Adv. Math.},
   volume={219},
   date={2008},
   number={6},
   pages={1872--1893},
}
\end{biblist}
\end{bibdiv}
\end{document}